\documentclass[11pt]{amsart}

\usepackage{hyperref,amsmath,amsthm,amsfonts,amscd,flafter,epsf,amssymb,
ifsym,color}
\usepackage[lmargin=1.5in,rmargin=1.5in,tmargin=1in,bmargin=1in]{geometry}
\usepackage{epsfig}
\usepackage{amsfonts}
\usepackage{amssymb,xfrac}
\usepackage{mymathabx}
\usepackage{amsmath}
\usepackage{amsthm}
\usepackage{epsf}

\usepackage{todonotes}
\synctex=1

\input{diagram}


\newcommand{\smooth}{\mathrm{smooth}}
\newcommand{\topo}{\mathrm{top}}

\newcommand{\F}{\mathbb F}

\newcommand{\fmap}{\mathfrak{f}}

\newcommand{\tsf}{\mathsf{t}}

\newcommand{\Tangle}{\mathcal{T}}

\newcommand{\Rin}{\mathbb{M}}

\newcommand{\grt}{\mathrm{gr}_{\tsf}}


\newcommand{\nd}{\mathrm{nd}}

\newcommand{\la}{\mathfrak{u}}
\newcommand{\cl}{\mathfrak{c}}


\newcommand{\Ht}{\mathrm{H}}


\newcommand{\Qcal}{\mathcal{Q}}
\newcommand{\Dcal}{\mathcal{D}}

\newcommand{\el}{\kappa}
\newcommand{\Pcal}{\mathcal{P}}


\newcommand\Dual{\mathcal D}
\newcommand\Duality\Dual



\newcommand{\relspinc}{{\underline{\spinc}}}

\newcommand\x{\mathbf x}
\newcommand\w{\mathbf w}
\newcommand\z{\mathbf z}

\newcommand\y{\mathbf y}

\newcommand\ModSphere{\ModFlow\left({\mathbb S}\longrightarrow
\Sym^{g-1}(\Sigma_{1})\times \Sym^2(\Sigma_{2})\right)}
\newcommand\ModSpheres\ModSphere

\newcommand\gr{\mathrm{gr}}

\newcommand\UnparModSp{\widehat \ModSp}
\newcommand\UnparModFlow\UnparModSp

\newcommand{\spinc}{\mathfrak s}
\newcommand{\spinct}{\mathfrak t}

\newcommand\ModMaps{\mathcal M}
\newcommand\ModSp\ModMaps

\newcommand\Ta{{\mathbb T}_{\alpha}}
\newcommand\Tb{{\mathbb T}_{\beta}}
\newcommand\Tc{{\mathbb T}_{\gamma}}

\newcommand\del{\partial}

\newcommand\alphas{\mbox{\boldmath$\alpha$}}

\newcommand\betas{\mbox{\boldmath$\beta$}}
\newcommand\gammas{\mbox{\boldmath$\gamma$}}

\newcommand\Ring{\mathbb A}
\newcommand\Rring{\mathcal R}


\newcommand\spincrel\relspinc
\newcommand\relspinct{\underline{\mathfrak t}}




\hyphenation{ho-mol-o-gous}

\newtheorem{convention}[equation]{Convention}
\newtheorem{thm}{Theorem}[section]
\newtheorem{prop}[thm]{Proposition}
\newtheorem{cor}[thm]{Corollary}

\newtheorem{lem}[thm]{Lemma}

\newtheorem{example}[thm]{Example}
\newtheorem{defn}[thm]{Definition}

\newtheorem{remark}[thm]{Remark}

\def\endproof{\relax\ifmmode\expandafter\endproofmath\else
  \unskip\nobreak\hfil\penalty50\hskip.75em\hbox{}\nobreak\hfil\bull
  {\parfillskip=0pt \finalhyphendemerits=0 \bigbreak}\fi}
\def\endproofmath$${\eqno\bull$$\bigbreak}
\def\bull{\vbox{\hrule\hbox{\vrule\kern3pt\vbox{\kern6pt}\kern3pt\vrule}\hrule}}

\newcommand{\R}{\mathbb{R}}

\newcommand{\Z}{\mathbb{Z}}

\newcommand{\ModSWfour}{\mathcal{M}}
\newcommand{\ModFlow}{\ModSWfour}

\newcommand{\SpinC}{{\mathrm{Spin}}^c}

\newcommand\abuts\Rightarrow
\newcommand\Sym{\mathrm{Sym}}



\newcommand{\Cob}{\mathcal{C}}

\newcommand{\CFT}{\mathrm{CF}}
\newcommand{\HFT}{\mathrm{HF}}

\newcommand{\m}{{\bf{m}}}
\newcommand{\match}{\mathfrak{m}}

\newcommand{\ra}{\rightarrow}
\newcommand{\Sig}{\Sigma}








\newcommand{\tvec}{\tsf} 


\newcommand{\HD}{\mathcal{H}} 

\begin{document}

\title[Upsilon invariant for graphs and applications]{Upsilon invariant for graphs and the homology cobordism group of homology cylinders}%
\author{Akram Alishahi}
\address{Department of Mathematics, University of Georgia, Athens, GA 30602}
\email{akram.alishahi@uga.edu}
\thanks{AA was supported by NSF Grant DMS-2019396.}

\begin{abstract}
Upsilon is a homomorphism on the smooth concordance group of knots defined by Ozsv\'{a}th, Stipsicz and Szab\'{o}. In this paper, we define a generalization of upsilon for a family of embedded graphs in rational homolog spheres. We show that our invariant will induce a homomorphism on the homology cobordism group of homology cylinders, and present some applications.  To define this invariant, we use tangle Floer homology. We lift relative gradings on tangle Floer homology to absolute gradings (for certain tangles) and prove a concatenation formula for it.
\end{abstract}
\maketitle
\tableofcontents


\section{Introduction}
Heegaard Floer theory was defined as an invariant of closed three-manifolds by Ozsv\'{a}th and Szab\'{o} \cite{OS-3m1,OS-3m2} and over the last two decades has developed into a powerful package of invariants for various low dimensional topology objects, including knots \cite{OS-knot, Ras1}, links \cite{OS-linkinvariant}, four-manifolds \cite{OS-4m}, contact structures \cite{OS-contact, HKM-1, HKM-2}, etc.  

Two important groups in low dimensional topology are \emph{homology cobordism group of homology three-spheres} $\Theta^3_{\Z}$ and \emph{smooth (resp. topological) knot concordance group} $\mathcal{C}^{\smooth}$ (resp. $\mathcal{C}^{\topo}$).  Using Heegaard Floer homology, Ozsv\'{a}th and Szab\'{o} define a homomorphism on $\Theta^3_{\Z}$ called \emph{$d$-invariant} or \emph{correction term} \cite{OS-d-invariant} which has applications in Dehn surgery problems and knot concordance. Several knot invariants have been developed for studying the concordance groups, such as $\tau$ \cite{OS-four-ball}, $\nu$ \cite{OS-nu}, $\nu^+=\nu^-$ \cite{Hom2014FourballGB} and $\Upsilon$ \cite{OSS-upsilon}. For instance, Ozsvath, Stipsicz and Szab\'{o} use $\Upsilon$ to construct a homomorphism from the smooth concordance group of knots $\mathcal{C}^{\smooth}$ to $\Z^\infty$ so that its surjective over the kernel of $\mathcal{C}^{\smooth}\to\mathcal{C}^{\topo}$ and reprove Hom's theorem \cite{Hom:summand-Ctop}.


The goal of this paper is to develop a generalization of upsilon for a family embedded graphs in rational homology spheres. 
We use our invariant to construct homomorphisms from $\mathcal{H}^{\smooth}_{0,n}$, the smooth homology cobordism group of homology cylinders over a genus zero surface with $n$ boundary components, to $\mathbb{R}$.  Our main tool is tangle Floer homology \cite{AE-1,AE-2}, and along the way we enrich tangle Floer homology by defining an absolute gradings on it for certain tangles, and proving a concatenation formula.

%


\begin{convention}
A bipartite graph is called \emph{balanced}, if both parts in each connected component of the graph have the same number of vertices. In this paper, graph means bipartite balanced graph where one part is labelled with $+$ and the other part with $-$, and an order is fixed on the edges of the graph. Specifically, if $G$ is connected with $2n$ vertices and $\el$ edges, vertices split as $v(G)=v_+(G)\coprod v_-(G)$ so that $v_{\pm}(G)$ has $n$ vertices, and the edges are labelled as $e_G=\cup_{i=1}^\el e_i$ connecting $v_-(G)$ to $v_+(G)$. For instance, if $G$ has two vertices and two edges and so a knot, such labelling will induce an orientation on the knot as $e_1\cup(-e_2)$. 
\end{convention}



Let $G$ be an embedded graph in a closed three-manifold $Y$ with $2n$ vertices and $\el$ edges. We define a system $L
_G$ of $2n$ linear equations with $\el$ variables by setting:
\begin{equation}\label{eq:LG}
L_G:=\left\{L_j:\sum_{v_j\in\del e_i}t_i=2~\Big|~v_j\in v(G)\right\}.
\end{equation}
A solution $\tsf=(t_1,t_2,\ldots,t_{\el})$ for $L_G$ is called \emph{non-negative} if every $t_i\ge 0$. Note that $L_G$ has a non-negative solution if and only if $G$ contains a perfect matching (Lemma \ref{lem:solconvex}). We denote the set of non-negative solutions for $L_G$ by $\mathsf{L}_{G}$. 

Suppose $Y$ is a rational homology sphere, $\mathsf{L}_G\neq\emptyset$ and $G$ is nullhomologous i.e. the image of the map induced by inclusion from $H_1(G,\Z)$ to $H_1(Y,\Z)$ is trivial. Then, for any $\spinc\in\SpinC(Y)$ we define a family of invariants 
\[\Upsilon_{G,\spinc}(\tsf):\Upsilon_1(\tsf)\le\Upsilon_2(\tsf)\le\cdots\le\Upsilon_{2^{n-1}}(\tsf).\]
where each $\Upsilon_i(\tsf)$ is a piecewise linear function over $\mathsf{L}_G$.  


\begin{thm} \label{thm:invhomcob}
Suppose $G$ and $G'$ are embedded null-homologous graphs in rational homology spheres $Y$ and $Y'$, respectively. For any $\spinc\in\SpinC(Y)$ and $\spinc'\in\SpinC(Y')$, if $(G,\spinc)$ and $(G',\spinc')$ are $\SpinC$ homology concordant (Definition \ref{def:homcob}) then $\Upsilon_{G,\spinc}(\tsf)=\Upsilon_{G',\spinc'}(\tsf)$ for any $\tsf\in\mathsf{L}_G=\mathsf{L}_{G'}$.
\end{thm}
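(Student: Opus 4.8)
The plan is to adapt Ozsv\'ath--Stipsicz--Szab\'o's proof that $\Upsilon_K$ is a concordance invariant to the tangle Floer setting, using the absolute gradings and the concatenation formula established above. First I would unpack Definition~\ref{def:homcob}: a $\SpinC$ homology concordance from $(Y,G,\spinc)$ to $(Y',G',\spinc')$ supplies a homology cobordism $W$ from $Y$ to $Y'$, a graph cobordism $\Gcal\subset W$ from $G$ to $G'$ that is product-like in the sense that it preserves the labels, the edge ordering, and hence the linear system (so $L_G=L_{G'}$ and $\mathsf{L}_G=\mathsf{L}_{G'}$), and a $\spinc$ structure $\spinc_W$ on $W$ restricting to $\spinc$ and $\spinc'$. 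Since $H_1(W)\to H_1(Y)$ and $H_1(W)\to H_1(Y')$ are isomorphisms and $G$, $G'$ are null-homologous, $\Gcal$ is null-homologous as well, so the $\tsf$-filtrations on the two ends are comparable via $\spinc_W$.

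Next I would realize $\Gcal$ by a cobordism of tangles: after choosing a splitting of $W$ compatible with the Heegaard data used to define $\Upsilon$ and putting $\Gcal$ in generic position, $\Gcal$ becomes a composition of elementary tangle cobordisms (births, deaths, saddles, and the stabilizations and handle attachments coming from a handle decomposition of $W$ rel boundary), and the associated maps on tangle Floer homology compose to chain maps $f_\Gcal$ and $f_{\ba\Gcal}$ between the filtered complexes computing $\Upsilon_{G,\spinc}$ and $\Upsilon_{G',\spinc'}$. The heart of the argument is to check that each elementary map, and hence $f_\Gcal$, is a map of $\tsf$-filtered complexes for every $\tsf\in\mathsf{L}_G$, with absolute grading shift independent of $\tsf$. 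This is exactly where the absolute grading lift is needed --- the tangles coming from null-homologous graphs in rational homology spheres are precisely those for which $\HFT$ carries the absolute grading --- and the concatenation formula lets me compute the grading shift of the composite as the sum of the shifts of the pieces. Since the $\tsf$-filtration level of a generator is an affine function of the absolute grading data with coefficients read off from $\tsf$ and $\spinc$, filteredness of the elementary pieces gives filteredness of $f_\Gcal$; and because $W$ is a homology cobordism ($H_2(W)=0$), the would-be topological correction term (the analogue of $c_1(\spinc_W)^2$) vanishes, so the filtration shift is $0$ and the absolute grading shift is a universal constant.

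Finally, I would apply the composition law for tangle cobordism maps: $f_{\ba\Gcal}\circ f_\Gcal$ is chain homotopic to the map induced by the doubled concordance $\Gcal\cup_{G'}\ba\Gcal$ inside the homology cobordism $W\cup_{Y'}\ba W$, and --- just as a homology cobordism between rational homology spheres induces an isomorphism on $\mathrm{HF}^+$ --- this map is a $\tsf$-filtered chain homotopy equivalence up to the universal shift. Hence $f_\Gcal$ and $f_{\ba\Gcal}$ are mutually inverse filtered chain homotopy equivalences up to shift, so they induce isomorphisms of the $\tsf$-filtered homology groups used to define the $\Upsilon_i$; comparing, in each filtration level, the maximal grading in which a nonzero class survives yields $\Upsilon_i(\tsf)$ for $(G,\spinc)$ equal to $\Upsilon_i(\tsf)$ for $(G',\spinc')$, for every $i$ and every $\tsf\in\mathsf{L}_G=\mathsf{L}_{G'}$.

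I expect the main obstacle to be the middle step: showing that each elementary tangle cobordism map is $\tsf$-filtered with the correct shift, and that the concatenation formula for the absolute grading is compatible with the filtration bookkeeping --- in particular, that the shifts of the elementary pieces assemble, via the concatenation formula, into precisely the topological quantity that vanishes for a homology cobordism. A secondary difficulty is the Cerf-theoretic input: decomposing an arbitrary $\SpinC$ homology concordance $(W,\Gcal)$ rel boundary into the elementary cobordisms to which the tangle Floer cobordism maps apply.
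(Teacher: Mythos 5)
Your high-level strategy---realize the concordance as a tangle cobordism, show the induced map is $\tsf$-graded, and use the homology cobordism hypothesis to force the grading shift to vanish---is the right one, and it broadly matches the paper's. But two of your key technical steps have genuine gaps, and a third invokes the wrong tool.

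First, you claim you will check that each ``elementary tangle cobordism map'' (births, deaths, saddles) is $\tsf$-filtered with computable shift. This misreads the geometry: for a $\SpinC$ homology concordance, the surface $F$ in the cobordism $(W,F)$ is \emph{already} a disjoint union of product disks (Definition \ref{def:homcob}), so there are no births, deaths, or saddles to decompose. The decomposition one actually performs is of the \emph{four-manifold} $W$ into handle attachments, with $F$ riding along; the tangle Floer cobordism map $\fmap_{\Cob}$ from \cite{AE-2} is already organized this way. More importantly, the paper does not prove $\grt$-preservation move-by-move. It observes that for each perfect matching $\match$, replacing the universal coloring by the $\F[U_1,\ldots,U_n]$-coloring $\cl_\match$ identifies $\CFT(M,T,\cl_\match,\spinc)$ with the multi-pointed $\CFT^-(Y,\spinc)$, under which $\fmap_\Cob^\match$ becomes a multi-pointed Ozsv\'ath--Szab\'o cobordism map, and then cites the known grading shift formula (\cite[Theorem 7.1]{OS-4m}, \cite[Theorem 1.4]{Zemke-gr}), which vanishes for a homology cobordism. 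Since $\grt$ is a positive affine combination of the $\gr_{\match_i}$'s (by the $\Delta$-complex structure on $\mathsf{L}_G$), preserving each $\gr_{\match}$ automatically preserves $\grt$. That reduction is the actual key lemma, and your proposal does not contain it.

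Second, the concatenation formula (Theorem \ref{TFH-concatenation}) plays no role in this proof. It concerns gluing two \emph{tangles} along a boundary component, not composing two four-dimensional cobordisms, and in the paper it is used only for the vertex-connected-sum additivity (Lemma \ref{lem:sum}). Invoking it to ``compute the grading shift of the composite as the sum of the shifts of the pieces'' conflates two different structures; the additivity of grading shifts under composition of cobordism maps is a separate fact.

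Third, your closing step asserts that $f_{\ba\Gcal}\circ f_{\Gcal}$ is chain homotopic to the identity because the doubled concordance sits in $W\cup_{Y'}\ba W$. But $W\cup_{Y'}\ba W$ is a homology cobordism, not the product $[0,1]\times M$, so this composite is not the identity cobordism and there is no a priori reason for its induced map to be the identity (or even a homotopy equivalence over the ring $\Rring$, which has torsion issues). The paper sidesteps this: it shows $\fmap_\Cob^\tsf\otimes\Rring^*$ is an isomorphism by commuting it (via Lemma \ref{lem:infty1}) with the $\match$-reduction and using that the multi-pointed OS cobordism map for a homology cobordism of rational homology spheres is an isomorphism (Lemma \ref{lem:infty2}); this gives that $\fmap_\Cob^\tsf$ induces a grading-preserving \emph{injection} on $H_*/\mathrm{Tor}$, hence $\Upsilon_i(\tsf)\le\Upsilon'_i(\tsf)$, and then runs the same argument with $W$ reversed to get the other inequality. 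Your ``mutually inverse up to shift'' shortcut would need its own justification, and the two-sided injection argument is what replaces it.

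To repair your proposal, drop the concatenation formula, replace the move-by-move grading analysis with the reduction to perfect-matching colorings and the Ozsv\'ath--Szab\'o/Zemke grading shift formula, and replace the doubling argument with the two-sided injectivity argument mediated by the ring extension $\Rring\hookrightarrow\Rring^*$.
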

An embedded graph $G$ is called a \emph{$\mathit{\Theta_n}$-type} graph if it has two vertices and $n$ edges. For any $\Theta_n$-type graph $G$ \[\mathsf{L}_G=\mathsf{L}_n=\{\tsf=(t_1,t_2,\cdots,t_n)\in[0,2]^n\ \big{|}\ t_1+t_2+\cdots+t_n=2\}.\]
We show that for $n=1,2$ our invariant recovers $d$-invariant and upsilon invariant for knots and links, respectively, as follows. \begin{enumerate}
\item If $n=1$, then then $\mathsf{L}_G=\{2\}$ and $\Upsilon_{G,\spinc}=d(Y,\spinc)$.
\item If $n=2$ and $Y=S^3$, then $\mathsf{L}_G=\{(t,2-t)\ |\ t\in [0,2]\}$ and $\Upsilon_{G}(t,2-t)=\Upsilon_K(t)$ where $K$ is the oriented knot associated with $G$.
\item If $n=2$ and $G$ is a disjoint union of $n$ $\Theta_2$-type graphs in $S^3$, then it represents an oriented link, denoted by $L$. The solution set $\mathsf{L}_G$ is an $n$-dimensional cube and under certain labeling of $e(G)$ as described in Example \ref{ex:link} $\Upsilon_G(t,t,\cdots,t, 2-t, 2-t,\cdots,2-t)=\Upsilon_L(t)$ for any $t\in[0,2]$.
\end{enumerate}


\begin{lem}\label{lem:sum} Suppose $G_1\subset Y_1$ and $G_2\subset Y_2$ are null-homologous $\Theta_n$-types graphs embedded in rational homology spheres. If $G\subset Y=Y_1\#Y_2$ is a vertex connected sum of $G_1$ and $G_2$ (Definition \ref{def:vsum}), we have
\[\Upsilon_{G,\spinc_1\#\spinc_2}(\tsf)=\Upsilon_{G_1,\spinc_1}(\tsf)+\Upsilon_{G_2,\spinc_2}(\tsf).\]
Here, $\spinc_1\in\SpinC(Y_1)$ and $\spinc_2\in\SpinC(Y_2)$.
\end{lem}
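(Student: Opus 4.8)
The plan is to deduce the additivity from the concatenation formula for tangle Floer homology together with a Künneth-type argument on the $\tsf$-modified complexes, in the same spirit as the deduction of $\Upsilon_{K_1\# K_2}(t)=\Upsilon_{K_1}(t)+\Upsilon_{K_2}(t)$ from the Künneth formula for $\CFKinf$. First I would unwind Definition~\ref{def:vsum}. Since each $G_i$ is a $\Theta_n$-type graph, the two-sphere $S\subset Y=Y_1\# Y_2$ along which the connected sum is performed can be isotoped to meet $G$ transversally in exactly $n$ points, namely the ends of the $n$ edges incident to the identified vertex. Cutting $Y$ along $S$ therefore exhibits $(Y,G)$ as the concatenation of the two $n$-strand tangles $\tau_i=(Y_i\setminus B_i,\ G_i\setminus\{v_i\})$ along the $n$-punctured sphere $S$, and, under the labelling $e_i=e_i^1\cup e_i^2$, the $i$-th coordinate of $\tsf$ is common to $G$, $G_1$ and $G_2$; in particular $\mathsf{L}_G=\mathsf{L}_{G_1}=\mathsf{L}_{G_2}=\mathsf{L}_n$, so the identity makes sense.

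Next I would feed this splitting into the tangle Floer package. Because $Y_i$ is a rational homology sphere, $G_i$ is null-homologous and $\mathsf{L}_{G_i}\neq\emptyset$, the $\SpinC$ structures and the absolute $\tsf$-shifted gradings entering the definition of $\Upsilon_{G_i,\spinc_i}$ are defined, and, exactly as for closed three-manifolds, the $\SpinC$ structure $\spinc$ on $Y$ decomposes uniquely as $\spinc_1\#\spinc_2$. The concatenation formula then identifies the $\spinc$-component of the tangle Floer invariant of $G$ with the tensor product, over the relevant coefficient ring and along the gluing sphere $S$, of the invariants of $\tau_1$ and $\tau_2$, compatibly with the $\tsf$-gradings. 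The subtlety is that the concatenation formula is proved for the ordinary invariant, whereas $\Upsilon_{G,\spinc}$ is read off from a $\tsf$-modified complex $C_\tsf$ obtained by a filtered change of coefficients to a ring such as $\mathcal{R}_\tsf=\mathbb{F}[\mathbb{R}_{\ge0}]$. I would check that this change of coefficients is natural under the gluing — the $\tsf$-filtration of the concatenated complex is the convolution of the two $\tsf$-filtrations, precisely because the matched strands $e_i^1$ and $e_i^2$ carry the same variable $t_i$ — so that $C_\tsf(G)\simeq C_\tsf(\tau_1)\otimes C_\tsf(\tau_2)$ up to filtered homotopy equivalence, and this identification is diagonal with respect to the splitting of each tangle invariant into its $2^{n-1}$ boundary sectors.

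Finally I would extract the numbers. On each of the $2^{n-1}$ sectors, after passing to homology and inverting the formal variable, $H_*(C_\tsf(\tau_i))$ is a free rank-one module over the localised coefficient field whose generator's grading shift, suitably normalised, is the $j$-th invariant of $(G_i,\spinc_i)$; sorting these $2^{n-1}$ shifts in increasing order yields the sequence $\Upsilon_1(\tsf)\le\cdots\le\Upsilon_{2^{n-1}}(\tsf)$. On each sector the tensor product over $\mathcal{R}_\tsf$ sends a pair of free rank-one towers to a free rank-one tower with the two grading shifts added; hence the $j$-th invariant of $(G,\spinc_1\#\spinc_2)$ is the sum of the $j$-th invariants of $(G_1,\spinc_1)$ and $(G_2,\spinc_2)$, which is precisely $\Upsilon_{G,\spinc_1\#\spinc_2}(\tsf)=\Upsilon_{G_1,\spinc_1}(\tsf)+\Upsilon_{G_2,\spinc_2}(\tsf)$. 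When $n=2$ there is a single sector and a single function, and this specialises to the classical additivity of $\Upsilon$, a useful consistency check.

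The hard part is the middle step: promoting the concatenation formula to the $\tsf$-modified setting while keeping control of the absolute gradings, and verifying that the pairing is diagonal on the boundary sectors, so that the $2^{n-1}$-tuple adds coordinatewise rather than producing all $4^{n-1}$ pairwise sums. Granting those two points, the lemma follows, and the same argument simultaneously reproves the additivity of $d$ and of $\Upsilon$ underlying items (1)--(3).
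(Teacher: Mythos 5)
Your route is the same as the paper's: realize the vertex connected sum as a tangle concatenation, invoke Theorem~\ref{TFH-concatenation}, check $\tsf$-grading compatibility, and conclude by K\"unneth. Two corrections are worth recording. First, the tangles to concatenate are $(M_{G_1},T_{G_1})$ and $(M_{G_2},T_{G_2})$, each obtained by removing balls around \emph{both} vertices; the pieces $Y_i\setminus B_i$ you describe have a single boundary sphere and are not balanced tangles in the sense of Section~\ref{sec:background}. Second, the worry about ``diagonality'' producing $4^{n-1}$ pairwise sums dissolves once you unwind the rank formula: the exponent in $2^{n-1}$ counts boundary-sphere pairs of $M_G$ (equivalently $|v(G)|/2$), not edges, so for a $\Theta_n$-type graph it is $2^{0}=1$. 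Each of $\tsf\HFT(M_{G_i},T_{G_i},\spinc_i)$ and $\tsf\HFT(M_G,T_G,\spinc)$ modulo torsion is then a free $\Rring^*$-module of rank one; $\Upsilon$ is a single number and the K\"unneth isomorphism simply adds the two top grading shifts. The step you correctly flag as substantive --- carrying the concatenation equivalence over to the $\tsf$-modified, absolutely graded complex --- is precisely what the paper supplies: each $1\le i\le n$ gives a perfect matching $e_i$ of $G$, $G_1$ and $G_2$, hence absolute gradings $\gr_i$; the equivalence from Theorem~\ref{TFH-concatenation} preserves each $\gr_i$ by reduction to the additivity of the Maslov grading on $\HF^-$ under connected sum, and $\grt$ is a convex combination of the $\gr_i$, so it is preserved as well.
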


Let $S=S_{g,n}$ be a genus $g$ surface with $n$ boundary components. Homology cobordism group of homology cylinders over $S$ is an enlargement of mapping class group introduced by Garoufalidis and Levine \cite{GL, Le}. If $g=0$, every homology cylinder $X$ over $S_{0,n}$ is the complement of an embedded  $\Theta_n$-type graph $G$ in an integral homology sphere, and we define $\Upsilon_X(\tsf):=\Upsilon_{G}(\tsf)$.
\begin{prop}\label{prop:hom}
For any $\tsf\in\mathsf{L}_n$, mapping any homology cylinder $X$ over $S_{0,n}$ to $\Upsilon_{X}(\tsf)$ will induce a homomorphism from $\mathcal{H}_{0,n}^{\smooth}$ to $\R$.
\end{prop}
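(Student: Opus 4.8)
The plan is to transport the statement to the setting of $\Theta_n$-type graphs, where Theorem~\ref{thm:invhomcob} provides well-definedness and Lemma~\ref{lem:sum} provides additivity. Recall that every homology cylinder $X$ over $S_{0,n}$ is, canonically up to the equivalence defining $\mathcal{H}_{0,n}^{\smooth}$, of the form $X=Y\setminus\nbd{G}$ for an integral homology sphere $Y$ and an embedded $\Theta_n$-type graph $G\subset Y$, with the ordered meridians of the $n$ edges of $G$ matching the $n$ ordered boundary circles of $S_{0,n}$. Since $H_1(Y;\Z)=0$, the graph $G$ is automatically null-homologous and $\SpinC(Y)=\{\spinc_0\}$ is a singleton, so there is no $\SpinC$-structure to choose; moreover $\mathsf{L}_G=\mathsf{L}_n$ is nonempty (it contains $(2/n,\dots,2/n)$). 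Hence $\Upsilon_X(\tsf):=\Upsilon_{G}(\tsf)$ is a well-defined real number for every $\tsf\in\mathsf{L}_n$, and it depends only on the diffeomorphism type of the pair $(Y,G)$, hence only on the equivalence class of $X$: a homology-cylinder equivalence is a diffeomorphism respecting the boundary parametrizations, so it carries $(Y,G)$ to the pair associated with the equivalent cylinder.

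First I would check that $\Upsilon_{(\cdot)}(\tsf)$ descends to the homology cobordism group. A homology cobordism $W$ between homology cylinders $X=Y\setminus\nbd{G}$ and $X'=Y'\setminus\nbd{G'}$ can be closed up: gluing the product cobordism $\nbd{\Theta_n}\times I$ along the side boundary of $W$ via the boundary parametrizations produces a homology cobordism $V$ from $Y$ to $Y'$ containing the properly embedded graph cobordism $\Theta_n\times I$ running from $G$ to $G'$. This is precisely a $\SpinC$ homology concordance from $(G,\spinc_0)$ to $(G',\spinc_0')$ in the sense of Definition~\ref{def:homcob}, so Theorem~\ref{thm:invhomcob} gives $\Upsilon_{G}(\tsf)=\Upsilon_{G'}(\tsf)$, i.e.\ $\Upsilon_X(\tsf)=\Upsilon_{X'}(\tsf)$. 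Thus $[X]\mapsto\Upsilon_X(\tsf)$ is a well-defined function $\mathcal{H}_{0,n}^{\smooth}\to\R$.

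Next I would identify the group law. The product $X_1\cdot X_2$ in $\mathcal{H}_{0,n}^{\smooth}$ is obtained by gluing $X_1$ and $X_2$ along a copy of $S_{0,n}$ matching the $n$ boundary circles in their fixed order. Translated through the correspondence above, this identifies the negative vertex of $G_1$ with the positive vertex of $G_2$ and splices the $i$-th edge of $G_1$ to the $i$-th edge of $G_2$ for each $i$, with the result that $X_1\cdot X_2=(Y_1\#Y_2)\setminus\nbd{G_1\#G_2}$, where $G_1\#G_2$ is exactly the vertex connected sum of Definition~\ref{def:vsum}, with its edge order (hence also $\mathsf{L}_{G_1\#G_2}=\mathsf{L}_n$) preserved. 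Lemma~\ref{lem:sum} then gives
\[
\Upsilon_{X_1\cdot X_2}(\tsf)=\Upsilon_{G_1\#G_2}(\tsf)=\Upsilon_{G_1}(\tsf)+\Upsilon_{G_2}(\tsf)=\Upsilon_{X_1}(\tsf)+\Upsilon_{X_2}(\tsf).
\]
So $[X]\mapsto\Upsilon_X(\tsf)$ is a monoid homomorphism between groups; consequently it sends the identity class (represented by $S_{0,n}\times I$, i.e.\ the standard $\Theta_n$ graph in $S^3$) to $0$ and inverses to negatives, and is therefore a group homomorphism $\mathcal{H}_{0,n}^{\smooth}\to\R$, as claimed.

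The step I expect to be the main obstacle is the topological bookkeeping underlying the two translations above: confirming that a homology cobordism of homology cylinders closes up to a $\SpinC$ homology concordance of the associated graphs, and that the stacking product agrees on the nose with the vertex connected sum --- including the identification $Y_1\#Y_2$ of the ambient manifolds and the matching of the ordered edge sets. Each of these reduces to tracking the $S_{0,n}$-parametrizations of the boundary carefully through a cut-and-paste; getting the conventions to line up (which vertex is glued to which, and the orientation reversal built into the homology-cylinder structure) is where the care is needed.
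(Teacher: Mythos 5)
Your argument follows the same route as the paper: the paper's proof invokes Lemma~\ref{lem:coninv} (whose proof constructs the homology cobordism of tangles from the homology cobordism of cylinders and then applies Theorem~\ref{thm:invhomcob}) for well-definedness, and Lemma~\ref{lem:sum} for additivity, and you do exactly this, merely inlining the content of Lemma~\ref{lem:coninv}. One small inaccuracy in your bookkeeping: you describe closing up the 4-dimensional homology cobordism by ``gluing $\nbd{\Theta_n}\times I$ along the side boundary,'' but the homology cobordism of cylinders already has closed boundary $(-X\cup X')/\!\sim$; the paper's version instead attaches $2$-handles along the framed link $i_\pm(\partial S)$ inside that closed boundary and takes the co-cores as the surface $F$ — the intent is the same, but the $2$-handle description is what actually produces a cobordism between the tangle pairs $(M_G,T_G)$ and $(M_{G'},T_{G'})$ as required by Definition~\ref{def:homcob}.
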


Building on \cite[Theorems 1.17 and 1.20]{OSS-upsilon}, we use the changes in the directional derivatives of $\Upsilon_X(\tsf)$ to construct a homomorphism from $\mathcal{H}_{0,n}^\smooth$ to $\Z^\infty$ whose restriction to kernel of $\mathcal{H}_{0,n}^{\smooth}\to \mathcal{H}_{0,n}^{\topo}$ is surjective (Lemmas \ref{lem:int} and \ref{lem:surj}). This will give a new proof for \cite[Theorem 1.1]{cha_friedl_kim_2011}.


On the other hand, a homology cylinder over $S_{0,n+1}$ corresponds to a string link with $n$ components in a homology cylinder over $D^2$, and these homology cylinders are homology cobordant if and only if the corresponding string links are homology concordant. Further, taking the closure of a homology string link we get an $n$ component link in an integral homology sphere. Follows from \cite{habegger_lin_1998} that two homology string links are homology cobordant if and only if their corresponding links are strongly homology concordant. Thus, $\Upsilon(\tsf)$ gives an obstruction for detecting whether two links are strongly homology concordant. 

For an embedded graph $G\subset Y$, the invariant $\Upsilon_{G,\spinc}$ is constructed by associating a tangle $(M_G,T_G)$ (in the sense of \cite{AE-2}) to $G$ and using tangle Floer homology defined by Eftekhary and the author \cite{AE-1}. Specifically, $M_{G}$ is constructed from $Y$ by removing pairwise disjoint balls around the vertices of $G$ and setting $T_G=G\cap M_{G}$. The splitting $v(G)=v_+(G)\coprod v_-(G)$ will induce a splitting $\del M_G=\del_+ M_G\coprod\del_-M_G$ and every connected component of $T_G$ is oriented from $\del_-M_G$ to $\del_+M_G$. If $Y$ is a rational homology sphere, for any $\tsf\in\mathsf{L}_G$, we define an absolute $\R$-grading on $\CFT(M_G,T_G,\spinc)$.  Then, following \cite{OSS-upsilon} we build a $\tsf$-modified tangle Floer chain complex denoted by $\tsf\CFT(M_G,T_G,\spinc)$ which is $\R$-graded. 

Additionally, to prove the vertex connected sum formula we show that tangle Floer homology has a concatenation formula as follows. 

\begin{thm}\label{TFH-concatenation}   Suppose $\Tangle_1$ and $\Tangle_2$ are $\Ring$-tangles corresponding to embedded graphs. For any diffeomorphism $d$ from $\del_+\Tangle_1$ to $\del_-\Tangle_2$ we have
\[\HFT(\Tangle)\cong H_*(\CFT(\Tangle_1)\otimes_{\Ring}\CFT(\Tangle_2))\]
where $\Tangle=\Tangle_1\circ_d\Tangle_2$.
\end{thm}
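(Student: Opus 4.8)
The plan is to prove Theorem~\ref{TFH-concatenation} by a neck-stretching argument on Heegaard-type diagrams, following the pattern of the pairing theorems in bordered and sutured Floer homology. First I would fix diagrams $\mathcal{H}_1$ and $\mathcal{H}_2$ computing $\CFT(\Tangle_1)$ and $\CFT(\Tangle_2)$ that are \emph{compatible along the gluing surface}: near the boundary component $\del_+\Tangle_1$ (resp.\ $\del_-\Tangle_2$) each diagram is required to contain a fixed standard piece modeled on a collar of that surface, with the curves meeting the standard piece being precisely the ones whose intersection pattern records the algebra $\Ring$ and the $\Ring$-module structures of \cite{AE-1,AE-2}. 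Using the diffeomorphism $d$ to identify the two standard pieces, I would glue $\mathcal{H}_1$ and $\mathcal{H}_2$ to obtain a diagram $\mathcal{H}$; a preliminary lemma, verifying the handlebody, basepoint, and arc conditions, shows that $\mathcal{H}$ represents $\Tangle=\Tangle_1\circ_d\Tangle_2$, and admissibility can be arranged after an isotopy of the curves supported away from the gluing region.

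Next I would match up generators. Every generator of $\CFT(\mathcal{H})$ restricts on the two halves to a pair $(\x_1,\x_2)$ of partial generators that agree along the identified collar; conversely each such matched pair assembles to a generator of $\CFT(\mathcal{H})$. The set of matched pairs, together with the identifications forced at the collar, is exactly the generating set of the tensor product $\CFT(\Tangle_1)\otimes_{\Ring}\CFT(\Tangle_2)$ --- this is precisely where tensoring over $\Ring$ (rather than over the ground field) enters. Thus there is a canonical bijection on generators, respecting the relevant gradings and $\SpinC$ decomposition.

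The core of the argument is to compare the two differentials. Insert a neck of length $T$ along the collar in $\mathcal{H}$ and let $T\to\infty$. By Gromov compactness, index-one holomorphic disks in $\mathcal{H}$ converge to broken configurations: a holomorphic disk on the $\Tangle_1$ side and one on the $\Tangle_2$ side, joined along a (possibly trivial) holomorphic curve in the cylinder over the collar, the contribution of the joining curve being exactly an element of $\Ring$ acting on the two sides. A matching gluing estimate shows conversely that, for $T$ large, every such compatible pair of disks glues to a unique disk in $\mathcal{H}$. Summing over matched pairs then identifies the differential on $\CFT(\mathcal{H})$ with the differential of $\CFT(\Tangle_1)\otimes_{\Ring}\CFT(\Tangle_2)$, and the same analysis applied to disks with boundary degenerations reproduces the $\Ring$-module relations. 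Hence for $T\gg 0$ one obtains a chain isomorphism, and passing to homology yields $\HFT(\Tangle)\cong H_*(\CFT(\Tangle_1)\otimes_{\Ring}\CFT(\Tangle_2))$; since $\CFT$ of a tangle is well defined up to chain homotopy equivalence, the answer is independent of the auxiliary choices.

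The main obstacle is the analytic input of the neck-stretching step: establishing the precise bijection between index-one curves in the stretched diagram and paired curves on the two halves, with the requisite transversality of the degenerate configurations, the gluing estimates, and --- crucially --- the bookkeeping that matches counts of boundary-adjacent curves with multiplication and with the module actions of $\Ring$. A secondary point is arranging the compatible diagrams so that the standard collar piece genuinely computes $\Ring$ and its actions as defined in \cite{AE-1,AE-2}; if the construction there already presents the boundary data in a bordered or sutured package, this step reduces to quoting the corresponding structural results.
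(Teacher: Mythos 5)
Your proposal and the paper take genuinely different routes, and yours contains a conceptual misidentification that makes it unlikely to go through as stated.

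You invoke the neck-stretching/pairing paradigm of bordered Floer homology: standard collar pieces, matched pairs of partial generators, degeneration to broken configurations joined over the collar. But tangle Floer homology in the sense of \cite{AE-1,AE-2} is \emph{not} packaged that way: there is no bordered algebra attached to $\del_+\Tangle_1$, and the ring $\Ring$ is a commutative polynomial-type ring acting via the basepoint multiplicities $n_{z_i}$, not via boundary behavior of curves. In particular, your key claim that ``tensoring over $\Ring$ is exactly where matching along the collar enters'' is mistaken: $\CFT(\Tangle_i)$ is a free $\Ring$-module on intersection points, so $\CFT(\Tangle_1)\otimes_\Ring\CFT(\Tangle_2)$ is simply the free $\Ring$-module on pairs $(\x_1,\x_2)$. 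No collar-matching condition is being imposed; tensoring over $\Ring$ rather than $\F$ just prevents double-counting the coefficient ring. Your ``joining curve in the cylinder over the collar contributing an element of $\Ring$'' has no meaning in this setup, and you would essentially have to build a bordered theory from scratch (the ``secondary point'' you hedge on at the end is in fact the whole obstacle). There is also a missing step: merely gluing two sutured Heegaard diagrams along a collar piece does not automatically yield a Heegaard diagram for the glued sutured manifold, and your proposal does not explain how to verify this.

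The paper instead proceeds in the spirit of the connected sum formula for $\HF$. It defines \emph{adapted} Heegaard diagrams (Definition~\ref{def:adaptedHD}) in which the curves split as $\alphas=\alphas_1\amalg\alphas_2$, $\betas=\betas_1\amalg\betas_2$ with $\alphas_2\cap\betas_1=\emptyset$, so the generator splitting is an elementary combinatorial fact rather than an analytic one, and proves existence/admissibility of such diagrams by Morse theory (Lemma~\ref{lem:admissible}). It first reduces to the common quotient coloring ring $\tilde\Ring$, ensuring the complexes split over relative $\SpinC$ structures. Then it compares the tensor complex to $\CFT(\HD)$ via a holomorphic \emph{triangle} map $\Gamma=f_{\alpha\gamma\beta,\mathfrak t}\circ(g_1\otimes g_2)$ (with $\gammas$ a small Hamiltonian perturbation of $\betas_1\cup\alphas_2$ and $g_i$ one-handle maps), and shows $\Gamma$ is a quasi-isomorphism by an area filtration argument: after choosing the area form so that periodic domains with $\langle c_1(\spinc),H(\Pcal)\rangle=0$ have zero signed area, $\Gamma=\Gamma_0+\text{lower order}$, with $\Gamma_0$ the tautological bijection from the small triangles. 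This circumvents any neck-stretching analysis entirely. If you want to keep the structure of your argument, you would at minimum need to (i) drop the ``matching along the collar'' interpretation of $\otimes_\Ring$, (ii) replace the neck-stretching step with the triangle map and an area/energy filtration, and (iii) add the reduction to $\tilde\Ring$ so that the filtration argument applies $\SpinC$-componentwise.
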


This paper is organized as follows. Section \ref{sec:background} is a brief overview on tangle Floer homology, and cobordisms between tangles. We continue our discussion of tangle Floer homology in Section \ref{sec:str} by first proving Theorem \ref{TFH-concatenation} in Section \ref{ssec:concat} and then defining an absolute $\R$-gradings on tangle Floer homology in torsion $\SpinC$ structure for tangles corresponding to embedded, null-homologous graphs with perfect matchings, Section \ref{sec:grading}. In Section \ref{sec:upsilon}, we define the $\tsf$-modified chain complex, the invariant upsilon, prove Theorem \ref{thm:invhomcob} and Lemma \ref{lem:sum}. Finally, in Section \ref{sec:app} we discuss the homology cobordism group of homology cylinders, the homomorphism induced on it by $\Upsilon$ (Proposition \ref{prop:hom}) and its applications.

\emph{Acknowledgments.} I am grateful to Robert Lipshitz for many inspiring conversations. I am also thankful to 
Eaman Eftekhary and Melissa Zhang for helpful conversations.

\section{Background: Tangle Floer homology}\label{sec:background}
In this Section we review some of the basic definitions of colored tangles, colored cobordisms between tangles and tangle Floer homology from \cite{AE-1} and \cite{AE-2}. 

A generalization of classical tangles to be in one-to-one correspondence with sutured manifolds without toroidal sutures is defined as follows.

\begin{defn}
A \emph{tangle} is a properly embedded oriented one-manifold $T$ in an oriented three-manifold with boundary $M$ so that $\del M=(-\del_-M)\coprod\del_+M$ and 
\begin{enumerate}
\item both $T$ and $M$ do not have a closed component,
\item each connected component $T_i\subset T$ is oriented from $\del_-T_i=T_i\cap \del_-M$ to $\del_+T_i=\del T_i\cap\del_+M$.
\end{enumerate}
Let $\del_{\pm}T=\coprod_i\del_\pm T_i$. We will think of $M$ as a cobordism from $\del_-M$ to $\del_+M$ and $T$
as a cobordism embedded in $M$ from $\del_-T$ to $\del_+T$. 
\end{defn}  
Suppose $M$ is connected. The tangle $(M,T)$ is called \emph{balanced} if $\chi(\del_-M)=\chi(\del_+M)$, and each connected component of $\del M$ has a non-empty intersection with $\del T$. If $M=\coprod_{i=1}^mM_i$ is not connected, then $(M,T)$ is called balanced if $(M_i,T_i=T\cap M_i)$ is balanced for all $i=1,\ldots, m$. Tangles are defined to be in one-to-one correspondence with sutured manifolds without toroidal sutures. Moreover, balanced tangles correspond to balanced sutured manifolds, see \cite{AE-2} for more details. 

Any embedded bipartite graph $G$ in a closed three-manifold $M$ specifies a tangle $(M_G,T_G)$ where $M_G$ is obtained from $M$ by removing pairwise disjoint ball neighborhoods around the vertices of $M$, and $T_G=G\cap M_G$. The tangle $(M_G,T_G)$ is balanced if and only if $G$ is balanced. 

\begin{convention} Over the rest of this paper, all tangles correspond to embedded graphs i.e. for all tangles $(M,T)$  every boundary component of $M$ has genus zero.
\end{convention}

\begin{defn}
Let $\F=\Z/2\Z$ and $\Ring$ be an $\F$-algebra. An \emph{$\Ring$-coloring} of a balanced tangle $(M,T=\coprod_{i=1}^{\el}T_i)$ with sphere boundary components is a labelling $\cl:\pi_0(T)\to \Ring$ s.t. for every connected component $M_i$ of $M$
\[\sum_{\del_+^{*}M\subset \del M_i} \cl(\del_+^{*}M)=\sum_{\del_-^{*}M\subset \del M_i} \cl(\del_-^{*}M).\]
Here, $\cl(\del_{\pm}^*M)=\prod_{T_j\cap \del_{\pm}^*M\neq\emptyset}\cl([T_j])$ where $\del_{\pm}^{*}M$ is a connected component of $\del_{\pm} M$.
\end{defn}

For each balanced tangle $(M,T)$ we define an $\F$-algebra $\Ring_T$ along with a \emph{universal} coloring $\cl_T:\pi_0(T)\to \Ring_T$ as follows. Specifically, every other coloring $\cl$ with some $\F$-algebra $\Ring$ is obtained as $\cl=f\circ \cl_T$ where $f$ is an algebra homomorphism from $\Ring_T$ to $\Ring$. Consider the polynomial ring $\F[\la_1,\la_2,\ldots,\la_{\el}]$. Each connected component $\del^*_{\pm}M$ of $\del M$ determines a monomial in this ring by setting
\[\la(\del_{\pm}^*M)=\prod_{T_j\cap \del_{\pm}^*M\neq\emptyset}\la_j.\]
Using these monomials we specify an ideal 
\[
\mathcal{I}:=\left\langle \sum_{\del_+^{*}M\subset M_i} \la(\del_+^{*}M)-\sum_{\del_-^{*}M\subset M_i} \la(\del_-^{*}M)\ |\ M_i\subset M\ \text{is a connected component} \right\rangle
\]
and we define

\begin{equation}\label{eg:tring}
\Ring_T=\frac{\F[\la_1,\la_2,\ldots,\la_{\el}]}{\mathcal{I}}.\end{equation}
The coloring $\cl_T$ is defined by setting $\cl_T([T_j])=\la_j$. It is easy to check that it is a coloring and it has the universal property. For the tangle $(M_G,T_G)$ corresponding to an embedded graph $G$, we denote $\Ring_G=\Ring_{T_G}$.

Let $\Theta_n$ be the connected bipartite graph with two vertices and $n$-edges. We say a tangle $(M,T)$ is a \emph{$\Theta_n$-type} tangle, if it corresponds to an embedding of $\Theta_n$ in a closed three-manifold. For any $\Theta_n$-type tangle \[\Ring_T=\F[\la_1,\cdots,\la_n].\]

An $\Ring$-tangle (with sphere boundary components) is a 4-tuple $\Tangle=[M,T,\cl,\spinc]$ where $(M,T)$ is a balanced tangle with sphere boundary components, $\cl$ is an $\Ring$-coloring on $(M,T)$ and $\spinc\in\SpinC(M)$.  A \emph{Heegaard diagram} (or \emph{$\Ring$-diagram})\[\HD=(\Sig,\alphas,\betas,\cl:\z\to\Ring,\spinc)\]
for $\Tangle$ consists of the followings.
\begin{enumerate}
\item An embedded surface $\Sig$ in $M$ which splits $M$ into two compression bodies $H_{\alpha}$ and $H_{\beta}$ where 
\[\del H_{\alpha}=-(\del_-M)\coprod \Sig\quad\text{and}\quad \del H_{\beta}=-\Sig\coprod\del_+M.\]
Moreover, $\Sig$ intersects each component $T_i$ of $T$ in exactly one point denoted by $z_i$, and $\z=\coprod_{i=1}^{\el}z_i$. 
\item $\cl:\z\to \Ring$ is the map induced by $\cl$, still denoted by $\cl$.
\item $\alphas=\coprod _{i=1}^{\ell}\alpha_i\subset \Sig$ is a union of embedded simple closed curves on $\Sig$ so that each $\alpha_i$ bounds a disk in $H_{\alpha}$ and after doing surgery on $\Sig$ along these disks we get a surface isotopic to $\del_-M$ in $M$.
\item $\betas=\coprod _{i=1}^{\ell}\beta_i\subset \Sig$ is a union of embedded simple closed curves on $\Sig$ so that each $\beta_i$ bounds a disk in $H_{\beta}$ and after doing surgery on $\Sig$ along these disks we get a surface isotopic to $\del_+M$ in $M$.
\end{enumerate}
Finally, we assume that $\HD$ is $\spinc$-admissible in the sense of \cite[Definition 4.1]{AE-1}.

Given a Heegaard diagram $\HD$ for an $\Ring$-tangle $\Tangle$ we define the chain complex $\CFT(\HD)$ to be the free $\Ring$-module generated by the intersection points $\x\in\Ta\cap\Tb$ in $\Sym^{\ell}(\Sig)$. Furthermore, the differential $\del:\CFT(\HD)\to\CFT(\HD)$ is defined by the formula
\[\del \x=\sum_{\y\in\Ta\cap\Tb}\sum_{\substack{\phi\in\pi_2(\x,\y)\\ \mu(\phi)=1}}\#\widehat{\mathcal{M}}(\phi)\cl(\phi)\cdot \y\]
where $\cl(\phi)=\prod_{i=1}^{\el}\cl(z_i)^{n_{z_i}(\phi)}$ and $\widehat{\mathcal{M}}(\phi)$ counts the number of holomorphic disks in $\Sym^{\ell}(\Sig)$ connecting $\x$ to $\y$ modulo $\R$-translation. Additionally, for any $\Ring$-module $\Rin$ we define the chain complex $\CFT^{\Rin}(\HD):=\CFT(\HD)\otimes_{\Ring}\Rin$. In \cite{AE-1} we prove that for any $\Ring$-module $\Rin$ the chain homotopy type of $\CFT^{\Rin}(\HD)$ is an invariant of the tangle $\Tangle$. So we denote 
\[\HFT(\Tangle):=H_{*}\left(\CFT(\HD)\right)\quad\text{and}\quad \HFT^{\Rin}(\Tangle):=H_{*}\left(\CFT^{\Rin}(\HD)\right).\]
Moreover, we denote $\CFT(M,T,\spinc):=\CFT(M,T,\cl_T,\spinc)$ and $\HFT(M,T,\spinc):=\HFT(M,T,\cl_T,\spinc)$ for the universal coloring $\cl_T$.

Note that for any fixed $\relspinc\in\spinc$, we have 
\[\CFT(M,T,\spinc)=\bigoplus_{\relspinc\in\spinc}\CFT(M,T,\relspinc).\]
Further, each summand $\CFT(M,T,\relspinc)$ is relatively graded by $\frac{\Z}{\mathfrak{d}(\relspinc)\Z}$ as an abelian group, where
\[\mathfrak{d}(\spinc)=\mathrm{gcd}_{h\in H_2(X)}\langle c_1(\relspinc),h\rangle.\] 
Here, $X=M\setminus\nd(T)$. To define this grading, consider a disk $\phi$ connecting the generator $\prod_{i=1}^{\el}\la_i^{a_i}\cdot\x$ to $\prod_{i=1}^{\el}\la_i^{b_i}\cdot\y$ with $n_{z_i}(\phi)=b_i-a_i$.Then, 
\begin{equation}\label{eq:gr}
\mathrm{gr}(\prod_{i=1}^{\el}\la_i^{a_i}\cdot\x,\prod_{i=1}^{\el}\la_i^{b_i}\cdot\y)=\mu(\phi).
\end{equation}
Since any two such disks differ by a periodic domain with coefficient zero at every $z_i$ i.e. a periodic domain which represents an element of $H_2(X)$, this grading is well-defined.

In \cite{AE-2}, Eftekhary and the author define TQFT type cobordism maps associated to $\Ring$-cobordisms between $\Ring$-tangles. We will recall some of the definitions we will need later for tangles with sphere boundary components. 

\begin{defn} 
Suppose $(M,T)$ and $(M',T')$ are balanced tangles corresponding to embedded graphs. A \emph{stable cobordism} from $(M,T)$ to $(M',T')$ is a pair $(W,F)$ satisfying the followings: 
\begin{enumerate}
\item $W$ is a smooth, oriented four-dimensional cobordism with boundary and corners from $M$ to $M'$, i.e. $\del W=\del_hW\cup\del_vW$ where
\[\del_vW=-M\coprod M',\quad \del_hW\cong [0,1] \times \del M\cong [0,1]\times \del M'\]
and $\del_vW\cap\del_hW=\del M\coprod\del M'$ are the corners.
\item Similarly, $F$ is an oriented, properly embedded surface with boundary and corners in $W$ which gives a cobordism from $T$ to $T'$ such that $\del_vF=(-T)\coprod T'$, $\del_h F\cap\del_hW=[0,1]\times\del T=[0,1]\times \del T'$ under the above identifications, and $\del_hF\cap\del_vF=\del T\cup\del T'$.
\item The orientation induced from $F$ on $\del F$ is the orientation inherited from $-T\coprod T'$.
\item (Stability)Every connected component of $F$ which is not a disk intersects both $T$ and $T'$ in more than one connected components.
\end{enumerate}
\end{defn}

\begin{defn}
Given a stable cobordism $(W,F)$ from $(M,T)$ to $(M',T')$ as above, an $\Ring$-coloring of $(W,F)$ is a labeling $\cl:\pi_0(F)\to \Ring$ so that it induces $\Ring$-colorings on $(M,T)$ and $(M',T')$ by inclusion. More precisely, if $i:\pi_0(T)\to\pi_0(F)$ and $i':\pi_0(T')\to\pi_0(F)$ are induced by inclusion, $\cl\circ i$ and $\cl\circ i'$ are $\Ring$-colorings for $(M,T)$ and $(M',T')$, respectively. Moreover, an \emph{$\Ring$-cobordism} is a four-tuple $\mathcal{C}=[W,F,\cl,\spinct]$ where $(W,F)$ is a stable cobordism equipped with an $\Ring$-coloring $\cl$ and $\SpinC$ class $\spinct\in\SpinC(W)$. We say, $\mathcal{C}$ is an $\Ring$-cobordism from $\Tangle=[M,T,\cl\circ i,\spinct|_{M}]$ to $\Tangle'=[M',T',\cl\circ i',\spinct|_{M'}]$.
\end{defn}

For any $\Ring$-cobordism $\mathcal{C}$ from $\Tangle$ to $\Tangle'$, an invariant homomorphism $\mathfrak{f}_{\mathcal{C}}$ from $\CFT(\Tangle)$ to $\CFT(\Tangle')$ is defined in \cite{AE-2}. As a special case, these maps recover cobordism maps defined by Ozsv\'{a}th and Szab\'{o} associated to cobordisms between closed three-manifolds in \cite{OS-4m}, see \cite[Section 8.1]{AE-2}. A similar statement holds for the cobordisms between tangles corresponding to three-manifolds with multiple basepoints. Specifically, assume $(M,T)$ and $(M',T')$ are tangles corresponding to multipointed closed, oriented three-manifolds i.e. each boundary component of $M$ (resp. $M'$) intersects exactly one component of $T$ (resp. $T'$). Then, as in \cite[Section 8.1]{AE-2} using $T$ one can define a natural correspondence
\[s_{T}:\SpinC(M)\to \SpinC(Y),\]
  where $Y$ is the closed, oriented three-manifold obtained by filling the boundary components of $M$ with three-handles. Further, if $T=\coprod_{i=1}^n T_i$ then $\Ring_T=\F[\la_1,\cdots,\la_n]$ and 
  \[\HFT(M,T,\spinc)\cong\HFT^-(Y,s_T(\spinc)).\]
  An analogous statement holds for $(M',T')$. 
 Next, assume every connected component of $F$ is a disk intersecting exactly one connected component of $T$ and $T'$. Then, filling $\del_h W$ by attaching copies of $[0,1]\times D^3$ we get a cobordism $Z$ from $Y$ to $Y'$, where $Y'$ is the closed, oriented three-manifolds obtained by filling the boundary components of $M'$ with three-handles. Moreover, using $F$ we get a natural correspondence $s_F$ between $\SpinC(W)$ and $\SpinC(Z)$, such that $s_T(\spinct|_{M})=s_F(\spinct)|_{Y}$ and $s_{T'}(\spinct|_{M'})=s_F(\spinct)|_{Y'}$ for any $\spinct\in\SpinC(W)$.  Let $\Ring=\Ring_T=\Ring_{T'}$ and equip $(W,F)$ with the $\Ring$-coloring $\cl$ that induces the universal colorings on $(M,T)$ and $(M',T')$, respectively. Then, for any $\spinct\in\SpinC(W)$, the associated map $\mathfrak{f}_{\mathcal{C}}$ to the cobordism $\mathcal{C}=[W,F,\cl,\spinct]$ is equal to a multipointed version of Ozsv\'{a}th-Szab\'{o} maps from $\HFT^-(Y,s_T(\spinct|_{M}))$ to $\HFT^-(Y',s_{T'}(\spinct|_{M'}))$.

\begin{defn} A stable cobordism $(W,F)$ from $(M,T)$ to $(M',T')$ is called a \emph{homology cobordism} if the following conditions hold:
\begin{itemize}
\item $W$ is a homology cobordism from $M$ to $M'$ i.e. homomorphisms $i_{*}:H_{j}(M)\to H_{j}(W)$ and $i'_{*}:H_{j}(M')\to H_j(W)$ induced by inclusion are isomorphisms for $0\le j\le 3$.
\item $F$ is a disjoint union of disks such that each component of $F$ intersects $T$ and $T'$ in exactly one component.
\end{itemize}
\end{defn}

 \section{More structure on tangle Floer homology} \label{sec:str}
As stated before, all tangles correspond to embedded graphs. In this section, first, we prove a concatenation formula for tangle Floer homology, and then we define relative and absolute gradings for it.
%

 \subsection{A concatenation formula}\label{ssec:concat}
 Fix an $\F$-algebra $\Ring$. Given $\Ring$-tangles  $\Tangle_1=[M_1,T_1,\spinc_1,\cl_1]$ and $\Tangle_2=[M_2,T_2,\spinc_2,\cl_2]$, by a \emph{diffeomorphism $d$ from $\del_+\Tangle_1$ to $\del_-\Tangle_2$} we mean a diffeomorphism
 \[d:(\del_+M_1,\del_+T_1)\to(\del_-M_2,\del_-T_2)\] 
 which respects the colorings. Specifically, $\cl_2\circ d_{*}=\cl_1$ where $d_{*}$ from $\pi_0(T_1)$ to $\pi_0(T_2)$ is the isomorphism induced by $d$. 
 
 Using any diffeomorphism $d$ from $\del_+\Tangle_1$ to $\del_-\Tangle_2$, we may concatenate $\Tangle_1$ and $\Tangle_2$ to construct a new tangle $\Tangle=\Tangle_1\circ_d\Tangle_2$. Note that $\Tangle=[M,T,\spinc,\la]$ where $M=M_1\cup_d M_2$, $T=T_1\cup_d T_2$, $\cl$ is induced by $\cl_1$ and $\cl_2$ and $\spinc|_{M_i}=\spinc_i$ for $i=1,2$.


For the rest of this section, we will focus on proving Theorem \ref{TFH-concatenation}. First, we define the special type of Heegaard diagrams that we will need for our proof. 

\begin{defn}\label{def:adaptedHD} A Heegaard diagram $\HD=(\Sig,\alphas,\betas,\cl:\z\to\Ring,\spinc)$ for $\Tangle$ is called \emph{adapted} to the concatenation $\Tangle=\Tangle_1\circ_d\Tangle_2$ if $\alphas=\alphas_1\coprod\alphas_2$ and $\betas=\betas_1\coprod \betas_2$ satisfying the followings.

\begin{enumerate}
\item $\alphas_2\cap\betas_1=\emptyset$. 
\item After surgery on $\Sig$ along $\alphas_2$ and $\betas_1$ we get an embedded surface, denoted by $\Sig[\alphas_2\cup\betas_1]$, isotopic to $\del_+M_1$ which is identified with $\del_-M_2$.
\item We obtain Heegaard diagrams $\HD_1$ and $\HD_2$ for $\Tangle_1$ and $\Tangle_2$, respectively, by setting:
\[\HD_1=(\Sig[\alphas_2],\alphas_1,\betas_1,\cl,\spinc_1)\quad\text{and}\quad\HD_2=(\Sig[\betas_1],\alphas_2,\betas_2,\cl,\spinc_2).\]
\end{enumerate}
\end{defn}

\begin{lem}\label{lem:admissible}
Any $\Ring$-tangle $\Tangle$ obtained by concatenating an $\Ring$-tangle $\Tangle_1$ with an $\Ring$-tangle $\Tangle_2$  accepts a Heegaard diagram adapted to the concatenation. \end{lem}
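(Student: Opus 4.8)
\emph{Proof plan.} The idea is to build the adapted diagram by splicing together Heegaard diagrams for $\Tangle_1$ and $\Tangle_2$ along their common boundary piece $F:=\del_+M_1$, identified with $\del_-M_2$ via $d$, which --- since every boundary component is a sphere --- is a disjoint union of $2$-spheres. This is the tangle analogue of the construction used to glue Heegaard diagrams along a separating surface, as in the definition of the four-manifold cobordism maps \cite{OS-4m} and of gluing maps in sutured Floer homology. Concretely, I would first invoke the existence results of \cite{AE-1} to choose Heegaard diagrams $\HD_i=(\Sig_i,\alphas_i,\betas_i,\cl,\spinc_i)$ for $\Tangle_i$, $i=1,2$, using the colorings and $\SpinC$ structures inherited from $\Tangle$; then glue $\HD_1$ and $\HD_2$ across $F$; and finally verify (a) the combinatorial conditions (1)--(3) of Definition \ref{def:adaptedHD}, and (b) $\spinc$-admissibility.

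For the splicing: realize $\Sig_1$ inside $M_1$ with its compression bodies $H_{\alpha_1}\cup_{\Sig_1}H_{\beta_1}$ and $\Sig_2$ inside $M_2$ with $H_{\alpha_2}\cup_{\Sig_2}H_{\beta_2}$; then in $M=M_1\cup_dM_2$ the region between $\Sig_1$ and $\Sig_2$ is exactly $H_{\beta_1}\cup_F H_{\alpha_2}$. Recall that $\Sig_1$ is obtained from $F$ by attaching a $1$-handle (tube) for each $\betas_1$-curve, dual to the compressions along the $\betas_1$-disks, and likewise $\Sig_2$ from $F$ by the tubes dual to the $\alphas_2$-disks. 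I would take $\Sig$ to be $F$ with \emph{both} families of small tubes attached, pushed respectively into $H_{\beta_1}$ and into $H_{\alpha_2}$. Since $F$ is a union of spheres and meets $T$ in only finitely many points, after an isotopy all tube feet can be made pairwise disjoint and disjoint from $T\cap F$; then the $\betas_1$-curves and the $\alphas_2$-curves lie on distinct tubes, so $\alphas_2\cap\betas_1=\emptyset$, and each component of $T$ crosses $\Sig$ exactly once. Surgering $\Sig$ along $\betas_1$ removes the first family of tubes and returns $\Sig_2$; surgering along $\alphas_2$ returns $\Sig_1$; surgering along both returns $F\cong\del_+M_1$. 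This is precisely conditions (1)--(3) of Definition \ref{def:adaptedHD}, with induced diagrams $(\Sig[\alphas_2],\alphas_1,\betas_1,\cl,\spinc_1)=\HD_1$ and $(\Sig[\betas_1],\alphas_2,\betas_2,\cl,\spinc_2)=\HD_2$. Moreover the $\SpinC$ structure of the glued diagram restricts to $\spinc_i$ on $M_i$, hence equals $\spinc$, because $H^2(M)\to H^2(M_1)\oplus H^2(M_2)$ is injective (the gluing region $F$ has $H^1(F)=0$); and the coloring visibly agrees with the one on $\Tangle$.

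The remaining --- and I expect only genuinely delicate --- point is $\spinc$-admissibility in the sense of \cite[Definition 4.1]{AE-1}: even with $\HD_1$ and $\HD_2$ chosen admissible, the glued diagram can support periodic domains that are not local to either side. I would remove this by the standard winding argument, winding the $\alpha$-curves along a collection of curves dual to a basis of $H_1$, carried out in the complement of $F$ and of the tube feet so as not to disturb conditions (1)--(3) (winding an $\alpha$-curve coming from $\HD_2$ only replaces $\HD_2$ by another admissible diagram for $\Tangle_2$, which is all that is needed). This step is entirely parallel to the admissibility discussions for glued Heegaard diagrams in the closed setting \cite{OS-4m} and in sutured Floer homology, and I would follow those closely; once it is in place, Lemma~\ref{lem:admissible} follows.
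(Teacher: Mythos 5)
Your proposal takes essentially the same route as the paper, but with the surface construction presented differently. You build $\Sig$ by tube-splicing: attach both the $\betas_1$-dual tubes and the $\alphas_2$-dual tubes to the gluing spheres $F\cong\del_+M_1\cong\del_-M_2$. The paper reaches the same surface via Morse theory: concatenate Morse functions $f_1$ on $M_1$ and $f_2+2$ on $M_2$, re-order the critical points by exchanging the index-$2$ critical points of $M_1$ with the index-$1$ critical points of $M_2$ on a union of disjoint balls, and take $\Sig=(f')^{-1}(1)$, which is precisely $F$ with both families of tubes. Your verification of conditions (1)--(3), including the observation that disjoint tube feet give $\alphas_2\cap\betas_1=\emptyset$ and that the surgeries recover $\HD_1$, $\HD_2$, and $F$, is correct and matches what the paper needs.

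Where your sketch is thin is admissibility, and you underestimate how much the argument has to be adapted. You correctly note that the winding curves must avoid $\alphas_2\cup\betas_1$ to preserve condition (1), and the paper indeed chooses $\gammas=\gammas_1\coprod\gammas_2$ with $\gammas_1\cap\alphas_2=\emptyset$ and $\gammas_2\cap\betas_1=\emptyset$. But precisely because of this constraint, the usual ``wind and pass to a limit $\Qcal$'' argument does \emph{not} conclude that $\Qcal$ splits as a nonnegative combination of $\alpha$-only and $\beta$-only components. The paper shows instead that $\del_\alpha\Qcal\cdot\gammas=0$ forces $\Qcal$ to be a nonnegative combination of components of $\Sig\setminus\alphas$, of $\Sig\setminus\betas$, \emph{and} of a third family $\Sig\setminus(\alphas_2\cup\betas_1)$ that has no analogue in the unglued setting; winding alone cannot kill these. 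To conclude $\Qcal=0$, the paper invokes $\langle c_1(\spinc),H(\Qcal)\rangle=0$ (inherited from the defining condition on the bad periodic domains $\Pcal_n$) and expands it as $\sum 2a_i+\sum 2b_i+\sum 2c_i=0$, which forces all the nonnegative coefficients to vanish. This Chern-number step is short but essential and specific to the tangle setup; ``follow [OS-4m] closely'' would not produce it, so this is where your proposal as written has a genuine gap.
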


\begin{proof}
Let $f_i:M_i\to [-1,1]$ be a Morse function corresponding to a Heegaard diagram for $(M_i,T_i)$. Specifically, $f_i^{-1}(-1)=\del_-M_i$, $f_i^{-1}(1)=\del_+M_i$ and $f_i$ has no critical points with index zero or three. Moreover, $f_i$ evaluates $-1/2$  and  $1/2$ at critical points with index one and two, respectively.  Finally, $f_i|_{T_i}$ has no critical points. Let $f:M\to [-1,3]$ be the Morse function defined as $f|_{M_1}=f_1$ and $f|_{M_2}=f_2+2$, with possible smoothing near $\del_+M_1$ that is identified with $\del_-M_2$. 

Suppose $f_i$ has $\ell_i$ critical points with index one (and so index two).  Then, $f$ has $\ell_1+\ell_2$ critical points with index one (and so index two), but, $f$ is not ordered. We will fix this by switching the order of critical points of index 2 in $M_1$ with critical points of index one in $M_2$. This can be done by changing $f$ over a disjoint union of $\ell_1+\ell_2$ balls in $M$ 
to get an ordered Morse function $f'$ with the same critical points as $f$. Further, $f'$ evaluates $-\frac{1}{2}$ (resp. $\frac{5}{2}$) at critical points of index one (resp. two). Then $(f')^{-1}(1)$ is a Heegaard surface for $(M,T)$. Let $\z$ be the intersection of the tangle $T$ with the Heegaard surface and $\cl:\z\to\Ring$ be the coloring induced by the $\Ring$-coloring of $\Tangle$.  Fix a gradient like vector field for $f'$ and
let $\alphas$ (resp. $\betas$) be the union of the intersections of the unstable (resp. stable) disks for index one (resp. two) critical points with the Heegaard surface. We split $\alphas=\alphas_1\coprod\alphas_2$ (resp. $\betas=\betas_1\coprod\betas_2$) by setting $\alphas_i$ (resp. $\betas_i$) to be the curves corresponding to the critical points in $M_i$. It is easy to check that our Heegaard diagram satisfies in the conditions of Definition \ref{def:adaptedHD}. 

It is left to check that we can make  this diagram $\spinc$-admissible by applying isotopy to $\alpha$-curves, while keeping $\alphas_2$ disjoint from $\betas_1$. We will follow the proof of \cite[Lemma 4.5]{AE-1}, and adapt it to our setup. For $i=1,2$, we choose a set of pairwise disjoint simple closed curves $\gammas_i$ on the Heegaard diagram $\HD_i$ so that each element of this family is dual to at least one circle in $\alphas_i$ while every $\alphas_i$ has exactly one dual in $\gammas_i$. Moreover, for a sufficiently large $N$, \emph{winding} $\alphas_i$ around $\gammas_i$ $N$-times makes $\HD_i$ $\spinc$-admissible. Note that by winding $\alpha$ $N$-times along $\gamma$ we mean considering a parallel copy of $\gamma$ with reverse orientation, denoted by $\overline{\gamma}$, and winding $\alpha$ $N$-times along both $\gamma$ and $\overline{\gamma}$, see \cite[Lemma 4.5]{AE-1} for more details.

We may lift $\gammas_1\subset\Sig[\alphas_2]$ and $\gammas_2\subset\Sig[\betas_1]$ to simple closed curves on $\Sigma$, still denoted $\gammas_1$ and $\gammas_2$, so that $\gammas_1\cap \alphas_2=\emptyset$, $\gammas_2\cap\betas_1=\emptyset$ and $\gammas_1\cap\gammas_2=\emptyset$. We claim that for $N$ sufficiently large after winding the curves $\alphas=\alphas_1\coprod\alphas_2$ $N$-times along the curves $\gammas=\gammas_1\coprod\gammas_2$ we get an $\spinc$-admissible Heegaard diagram. If this claim does not hold, for each $n>0$ there is a periodic domain $\Pcal_n$ with $\langle c_1(\spinc),H(\Pcal_n)\rangle=0$ and $\cl(\Pcal_n)\neq 0$ so that after winding $\alphas$ along $\gammas$ $n$-times the resulted periodic domain $\Pcal_n'$ is positive. Coefficients of each domain represent a vector in $\Z^m$, where $m$ is the number of connected components of $\Sig\setminus(\alphas\cup\betas)$. For any periodic domain $\mathcal{P}$, we denote this vector by $\mathcal{P}$, as well. The sequence $\{\frac{\Pcal_n}{\|\Pcal_n\|}\}_{n=1}^{\infty}$ has a subsequence convergent to a unit vector in $\mathbb{R}^m$. Let $\Qcal$ denote the limit of this subsequence. If $\del_{\alpha}\Qcal$ has non-zero intersection number with some circle in $\gammas$, there exists $N>0$ such that after winding  the curves $\alphas$ along $\gammas$ $N$-times the resulting domain $\Qcal'$ will have some negative coefficients. So for $n$ sufficiently large $\Pcal_n'$ will not be positive. But, this contradicts the definition of $\Pcal_n$ and therefore, $\del_{\alpha}\Qcal$ will have intersection number zero with all of the $\gamma$-curves. Thus,
\[\Qcal=\sum_{i=1}^la_iA_i+\sum_{i=1}^kb_iB_i+\sum_{i=1}^{m}c_iC_i\]
where $\Sig\setminus \alphas=\bigcup_{i=1}^lA_i$, $\Sig\setminus \betas=\bigcup_{i=1}^kB_i$ and $\Sig\setminus \left(\alphas_2\cup\betas_1\right)=\bigcup_{i=1}^mC_i$. Here, $a_i,\ b_i$ and $c_i$ are nonnegative real numbers for every $i$. On the other hand, $\langle c_1(\spinc),H(\Qcal)\rangle=0$, and so 
\[\sum_{i=1}^l2a_i+\sum_{i=1}^k2b_i+\sum_{i=1}^m2c_i=0.\]
As a result, we have that $a_i, b_i$ and $c_i$ vanishes for all $i$, and so $\mathcal{Q}=0$. But this contradicts the fact that $\Qcal$ is a unit vector, and we are done.
\end{proof}

\begin{proof}(Theorem \ref{TFH-concatenation}) With Lemma \ref{lem:admissible} in place, the proof is similar to the proof of the connected sum formula for Heegaard Floer homology in \cite[Section 6]{OS-3m2}. 

Let $T=\coprod_{i=1}^{\el}T_i$, and label the components of $T_1=\amalg_{i=1}^{\el} T_1^i$ and $T_2=\amalg_{i=1}^{\el}T_2^i$ such that $T_*^i\subset T_i$. Fixing this labeling consider the universal algebras $\Ring_1=\Ring_{T_1}$ and $\Ring_2=\Ring_{T_2}$ corresponding to $(M_1,T_1)$ and $(M_2,T_2)$, respectively. If 
\[\Ring_1=\frac{\F[\la_1,\la_2,\cdots,\la_{\el}]}{\mathcal{I}_1}\quad\text{and}\quad\Ring_2=\frac{\F[\la_1,\la_2,\cdots,\la_{\el}]}{\mathcal{I}_2},\]
we define \[\tilde{\Ring}:=\frac{\F[\la_1,\la_2,\cdots,\la_{\el}]}{\mathcal{I}_1+\mathcal{I}_2}.\]
Clearly, $\tilde{\Ring}$ is a common quotient of $\Ring_T$, $\Ring_1$ and $\Ring_2$, and so there is a natural $\tilde{\Ring}$-colorings $\tilde{\cl}$ defined as 
\[\tilde{\cl}([T_i])=\la_i.\]
Moreover, restrictions of $\tilde{\cl}$ to $(M_1,T_1)$ and $(M_2,T_2)$ are $\tilde{\Ring}$-colorings. The given $\Ring$-coloring $\cl$ on $(M,T)$ factors through $\tilde{\cl}$ i.e.  for some homomorphism $f:\tilde{\Ring}\to\Ring$
we have $\cl=f\circ\tilde{\cl}$. Specifically, $f$ is defined by setting $f(\la_i)=\cl([T_i])$. Consequently, it is enough to prove the theorem for the coefficient ring $\tilde{\Ring}$ and  the coloring $\tilde{\cl}$, so we assume $\Ring=\tilde{\Ring}$ and $\cl=\tilde{\cl}$.

%


Let \[\HD=(\Sig,\alphas=\alphas_1\coprod\alphas_2,\betas=\betas_1\coprod\betas_2,\cl:\z\to\Ring,\spinc)\]
be a Heegaard diagram for $\Tangle$ adapted to the concatenation. Let $\gammas=\gammas_1\coprod\gammas_2$ be a set of pairwise disjoint simple closed curves on $\Sig$ such that $\gammas_1$ and $\gammas_2$ are obtained by applying small exact Hamiltonian isotopies to $\betas_1$ and $\alphas_2$, respectively. More precisely, each curve in $\gammas$ intersects the corresponding curve in $\betas_1\coprod\alphas_2$ in exactly two points while staying disjoint from the remaining curves. Let $(M_{\alpha\gamma},T_{\alpha\gamma})$ and $(M_{\gamma\beta},T_{\gamma\beta})$ denote the tangles associated with the Heegaard diagrams $(\Sig,\alphas,\gammas,\z)$ and $(\Sig,\gammas,\betas,\z)$. The Heegaard triple $\left(\Sig,\alphas, \gammas,\betas,\z\right)$ specifies a cobordism $W_{\alpha\gamma\beta}$ from $(M_{\alpha\gamma},T_{\alpha\gamma})\coprod (M_{\gamma\beta},T_{\gamma\beta})$ to $(M,T)$. Moreover, $\cl$ induces an $\Ring$-coloring on this cobordism.  

Note that there is a cobordism $W_1$ from $(M_1,T_1)$ to $(M_{\alpha\gamma},T_{\alpha\gamma})$ obtained by attaching $|\alphas_2|$ one-handles. Similarly, there is a cobordism $W_2$ from $(M_2,T_2)$ to $(M_{\gamma\beta},T_{\gamma\beta})$ by attaching $|\beta_1|$ one-handles. Moreover, composing $W_{\alpha\beta\gamma}$ with $W_1$ and $W_2$ we get the identity cobordism from $(M_1\cup_dM_2,T_1\cup_d T_2)=(M,T)$ to itself. Therefore,  $\spinc$ induces a natural $\SpinC$ structure on this product cobordism whose restriction to $W_{\alpha\gamma\beta}$, $W_1$ and $W_2$ is denoted by $\mathfrak{t}$, $\mathfrak{t}_1$ and $\mathfrak{t}_2$, respectively.

Counting holomorphic triangles representing $\mathfrak{t}$ we get a chain map 
\[f_{\alpha\gamma\beta,\mathfrak{t}}:\CFT(M_{\alpha\gamma},T_{\alpha\gamma},\cl,\spinc_1^0)\otimes\CFT(M_{\gamma\beta},T_{\gamma\beta},\cl,\spinc_2^0)\to\CFT(M,T,\cl,\spinc).\]
Here, $\spinc_1^0=\mathfrak{t}_1|_{M_{\alpha\gamma}}$ and $\spinc_2^0=\mathfrak{t}_2|_{M_{\beta\gamma}}$.

Furthermore, let $g_1=g_{W_1,\mathfrak{t}_1}$ and $g_2=g_{W_2,\mathfrak{t}_2}$ denote chain maps corresponding to cobordisms for $W_1$ and $W_2$, respectively. More precisely, 
\[g_1:\CFT(\Sig[\alphas_2],\alphas_1,\gammas_1,\cl,\spinc_1)\to\CFT(\Sig,\alphas,\gammas,\cl,\spinc_1^0)\] is given by
$g_1(\x_1')=\x_1'\otimes\Theta_{\alpha_2\gamma_2}$. Here, $\Theta_{\alpha_2\gamma_2}$ denotes the top intersection point between $\alphas_2$ and $\gammas_2$ curves and $g_1$ is a chain map for an appropriate choice of complex structure on $\Sig$. Similarly, 
\[g_2:\CFT(\Sig[\betas_1],\gammas_2,\betas_2,\cl,\spinc_2)\to\CFT(\Sig,\gammas,\betas,\cl,\spinc_2^0)\]
is given by $g_2(\x_2')=\Theta_{\gamma_1\beta_1}\otimes\x_2'$. Since $\alphas_2\cap\betas_1=\emptyset$ the complex structure on $\Sig$ can be chosen such that both $g_1$ and $g_2$ are chain maps. Let $g=g_1\otimes g_2$ and $\Gamma=f_{\alpha\gamma\beta,\mathfrak{t}}\circ (g_1\otimes g_2)$.


Next, we use an area filtration argument to show that $\Gamma$ is a quasi-isomorphism.  Since the Heegaard diagram $\HD$ is admissible, we may choose a volume form for $\Sig$ such that all of the periodic domains $\Pcal$ with $\langle c_1(\spinc),H(\Pcal)\rangle=0$ have total signed area zero. Further, consider the corresponding area forms on $\Sig[\alphas_2]$ and $\Sig[\betas_1]$.

It is straightforward that for $\Ring=\tilde{\Ring}$ and $\cl=\tilde{\cl}$ tangle Floer homology complex splits over the relative $\SpinC$ structures i.e.
\[\CFT(M,T,\cl,\spinc)=\bigoplus_{\relspinc\in\spinc}\CFT(M,T,\cl,\relspinc)\quad\text{and}\quad\CFT(M_i,T_i,\cl_i,\spinc_i)=\bigoplus_{\relspinc\in\spinc_i}\CFT(M_i,T_i,\cl_i,\relspinc)\]
for $i=1,2$. Further, each summand is a relatively graded by $\frac{\Z}{\mathfrak{d}(\relspinc)\Z}$ and $\frac{\Z}{\mathfrak{d}(\relspinc_i)\Z}$ as in Equation \ref{eq:gr}, respectively. It is easy to see that if $\relspinc=\relspinc_1\cup\relspinc_2$ then $\mathfrak{d}(\relspinc)$ divides $\mathfrak{d}(\relspinc_1)$ and $\mathfrak{d}(\relspinc_2)$. Thus, one may equip $\CFT(M_1,T_1,\cl_1,\relspinc_1)$ and $\CFT(M_2,T_2,\cl_2,\relspinc_2)$ with relative $\frac{\Z}{\mathfrak{d}(\relspinc)\Z}$-grading and $\Gamma$ induces a chain map 
\[\CFT(M_1,T_1,\cl_1,\relspinc_1)\otimes\CFT(M_2,T_2,\cl_2,\relspinc_2)\to\CFT(M,T,\cl,\relspinc) \]
that preserves these relative gradings.


Given a $\relspinc\in\spinc$, we define a filtration on each relative degree of $\CFT(\Sig,\alphas,\betas,\cl,\relspinc)$ by setting
\[\mathcal{F}(\prod_{i=1}^{\el}\la_i^{a_i}\cdot\x,\prod_{i=1}^{\el}\la_i^{b_i}\cdot\y)=-\mathcal{A}(\mathcal{D}(\phi)),\]
where $\phi\in\pi_2(\x,\y)$ with $\mu(\phi)=0$ and $n_{z_i}(\phi)=b_i-a_i$. This is well-defined, because any two such disks differ in a periodic domain $\Pcal$ with $\mu(\Pcal)=\langle c_1(\spinc),H(\Pcal)\rangle=0$ which has $\mathcal{A}(\Pcal)=0$. 
Furthermore, this filtration is bounded in each summand. Because, if generators $\prod_{i=1}^{\el}\la_i^{a_i}\cdot\x$ and $\prod_{i=1}^{\el}\la_i^{b_i}\cdot\x$ have the same relative grading then there is a disk $\phi\in\pi_2(\x,\x)$ with Maslov index zero such that $n_{z_i}(\phi)=b_i-a_i$. By assumption $\mathcal{A}(\mathcal{D}(\phi))=0$ and thus $\mathcal{F}(\prod_{i=1}^{\el}\la_i^{a_i}\cdot\x)=\mathcal{F}(\prod_{i=1}^{\el}\la_i^{b_i}\cdot\x)$. Analogously, we define a filtration on each relative degree summand of $\CFT(\Sigma[\alphas_2],\alphas_1,\gammas_1,\cl_1,\relspinc_1)$ and $\CFT(\Sigma[\betas_1],\gammas_2,\betas_2,\cl_2,\relspinc_2)$.

Assume that the total unsigned area between each $\betas_1$ or $\alphas_2$ curve and its Hamiltonian isotope is smaller than a sufficiently small $\epsilon$. Any $\x_1'\in \mathbb{T}_{\alpha_1}\cap\mathbb{T}_{\gamma_1}$ corresponds to a unique closest intersection point $\x_1\in \mathbb{T}_{\alpha_1}\cap\mathbb{T}_{\beta_1}$. Similarly, any $\x_2'\in \mathbb{T}_{\gamma_2}\cap\mathbb{T}_{\beta_2}$ corresponds to a unique closest intersection point $\x_2\in \mathbb{T}_{\alpha_2}\cap\mathbb{T}_{\beta_2}$. Define $\Gamma_0(\x_1'\otimes \x_2')=\x_1\cup\x_2$. Then, $\Gamma_0$ is clearly an isomorphism in each degree, but not necessarily a chain map. Moreover,    
\[\Gamma=\Gamma_0+\text{lower order}\]
with respect to the filtration $\mathcal{F}$. Because, first of all there is a canonical small triangle $\psi_0\in \pi_2(\x_1'\otimes \Theta_{\alpha_2\gamma_2}, \Theta_{\gamma_1\beta_1}\otimes \x_2',\x_1\otimes\x_2)$ with $\mu(\psi_0)=0$ contributing with coefficient one to $f_{\alpha\gamma\beta,\spinct}$. Then, if a triangle $\psi\in\pi_2(\x_1'\otimes \Theta_{\alpha_2\gamma_2}, \Theta_{\gamma_1\beta_1}\otimes \x_2',\y_1\otimes\y_2)$ with $\mu(\psi)=0$ has a holomorphic representative which contributes to $f_{\alpha\gamma\beta,\spinct}$ then there exists a disk $\phi$ connecting $\x_1\otimes\x_2$ to $\y_1\otimes \y_2$ with $\mu(\phi)=0$. Moreover, $\phi$ can be chosen such that the difference of $\psi$ and $\psi_0\ast\phi$ is a doubly periodic domain, whose domain is supported in the isotopy regions between circles in $\alphas_2$ and $\betas_1$ and their corresponding isotopes in $\gammas_2$ and $\gammas_1$, respectively. Therefore, $\mathcal{A}(\psi)=\mathcal{A}(\psi_0\ast \phi)$. Note that for a sufficiently small $\epsilon$, area of $\psi$ is bigger than $\epsilon$, while the area of $\psi_0$ is smaller than $\epsilon$. So $\mathcal{A}(\phi)>0$. Sinc the filteration is bounded below, and $\Gamma$ induces an isomorphism in each degree, it is an isomorphism.

\end{proof}

\subsection{Absolute Grading}\label{sec:grading}
Let $(M,T)=(M_G,T_G)$ be the tangle corresponding to an embedded graph $G$. In this section, we discuss relative and absolute $\R$-gradings on $\CFT(M,T,\spinc)$ for certain \emph{torsion} classes $\spinc\in\SpinC(M)$.

Suppose $G$ has $2n$ vertices and $\el$ edges. Every solution $\tsf=(t_1,\cdots,t_{\el})$ for the linear system $L_G$ defined in Equation \ref{eq:LG} gives an $\R$-grading, denoted by $\grt$, on $\Ring_T$ by setting $\grt(\la_i)=t_i$. So $t_i$ can be thought as a weight on $T_i$, and we use the notation $\mathsf{L}_T$ interchangeably with $\mathsf{L}_G$.

%

 Recall that a \emph{perfect matching} for $G$ is a subset of edges such that each vertex is incident to exactly one of them. Any perfect matching $\match=\coprod_{i=1}^ne_{j_i}$ specifies an element $\mathsf{t}_{\match}\in\mathsf{L}_G$ by setting 
 \[t_j=
\begin{cases}\begin{split} 
&2&\quad&j\in\{j_1,\ldots,j_{n}\}\\
&0&\quad&\text{otherwise}. 
\end{split}
\end{cases} 
\]

\begin{lem}\label{lem:solconvex} The subset $\mathsf{L}_G\subset \R^{\el}$ is the convex hull of the points \[\{\tsf_{\match}\ |\ \match:\ \text{perfect matching for}\ G\}.\]
In particular, $L_G$ has a non-negative solution if and only if $G$ contains a perfect matching. Furthermore, if $G$ is connected, this solution is unique if and only if $G$ is a tree.
\end{lem}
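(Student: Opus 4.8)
The plan is to prove the three assertions in sequence, as they build on one another. First I would show that $\mathsf{L}_G$ is the convex hull of the points $\{\tsf_\match\}$ ranging over perfect matchings $\match$ of $G$. One inclusion is immediate: each $\tsf_\match$ is a non-negative solution of $L_G$ (a perfect matching hits each vertex exactly once, so the equation $L_j$ at $v_j$ reads $2=2$), and $\mathsf{L}_G$ is obviously convex (it is the intersection of the affine subspace cut out by the equations $L_j$ with the orthant $\{t_i\ge 0\}$), so the convex hull of the $\tsf_\match$ lies in $\mathsf{L}_G$. The substantive direction is that every $\tsf\in\mathsf{L}_G$ lies in that convex hull, and this is exactly the content of the ``extreme point'' argument sketched in the commented-out proof: given $\tsf$ with some coordinate $t_{j}\in(0,2)$, there must be a cycle $\gamma=e_{j_1}e_{j_2}\cdots e_{j_{2k}}$ in $G$ all of whose edges carry weight strictly between $0$ and $2$. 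Indeed, consider the subgraph spanned by edges with weight in $(0,2)$; every vertex of $G$ touched by such an edge must in fact touch at least two of them (since the equation $L_j$ forces the weights at $v_j$ to sum to $2$, and a single weight in $(0,2)$ cannot do this alone — there is either another edge of weight in $(0,2)$, which gives what we want, or an edge of weight exactly... wait, weights are $\le 2$ and a weight of $2$ already forces all others at that vertex to be $0$), so this subgraph has minimum degree $\ge 2$ and hence contains a cycle. Using the bipartite structure, the cycle has even length $2k$ and its edges alternate between the two sides.

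Next, following the commented-out computation, I would push the weight around the cycle: set $t_o=\min\{t_{j_{2i+1}}\}$ and $t_e=\min\{t_{j_{2i}}\}$, and define $\tsf'$ by subtracting $t_e$ from the even-indexed edges of $\gamma$ and adding $t_e$ to the odd-indexed ones, and $\tsf''$ by the reverse modification with $t_o$. Because $\gamma$ is a cycle, at every vertex $v_j$ on $\gamma$ exactly one incident edge of $\gamma$ is ``even'' and one is ``odd'', so the net change to the sum $\sum_{v_j\in\partial e_i}t_i$ is zero; hence $\tsf',\tsf''$ still satisfy all the equations $L_j$, and the choice of $t_e,t_o$ keeps all coordinates in $[0,2]$, so $\tsf',\tsf''\in\mathsf{L}_G$. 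Moreover $\tsf$ lies on the segment between $\tsf'$ and $\tsf''$, and each of $\tsf',\tsf''$ has strictly more coordinates equal to $0$ or $2$ than $\tsf$ does (the minimizing edge acquires an extremal value). A downward induction on the number of non-extremal coordinates then shows $\tsf$ is a convex combination of vectors all of whose coordinates lie in $\{0,2\}$; and any such vector in $\mathsf{L}_G$ is precisely $\tsf_\match$ for the matching $\match=\coprod_{t_i=2}e_i$ (the equations $L_j$ force each vertex to be incident to exactly one weight-$2$ edge). This establishes the convex-hull claim.

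The ``in particular'' is then a one-line consequence: $\mathsf{L}_G\neq\emptyset$ iff the vertex set $\{\tsf_\match\}$ is non-empty iff $G$ has a perfect matching. For the uniqueness statement, assume $G$ is connected. If $G$ is a tree with $2n$ vertices, it has $\el=2n-1$ edges, so $L_G$ is a system of $2n$ equations in $2n-1$ unknowns; one checks it has at most one solution (the tree structure lets one solve for the weights from the leaves inward, each step forced), and if a perfect matching exists the solution is that unique $\tsf_\match$, which lies in $\mathsf{L}_G$. Conversely, if $G$ is connected but not a tree it contains a cycle $\gamma$; since $G$ is bipartite, $\gamma$ has even length, and — provided $\mathsf{L}_G\neq\emptyset$, so that some $\tsf_\match$ exists with a matched (hence weight-$2$, but more usefully weight-positive) edge, or more directly because $\mathsf{L}_G$ is a non-empty polytope containing a point we can perturb — adding a small multiple of the alternating $\pm1$ vector supported on $\gamma$ to a point of $\mathsf{L}_G$ in the relative interior produces a second solution; thus the solution is not unique. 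I expect the main obstacle to be the careful verification that the weight-$(0,2)$ subgraph really does contain a cycle — in particular checking the degree argument handles the interaction with weight-$0$ and weight-$2$ edges correctly, and that bipartiteness of $G$ is genuinely used to guarantee the cycle has even length so the alternating modification is well-defined — together with making the induction measure (number of non-extremal coordinates) strictly decrease at each step.
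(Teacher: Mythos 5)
Your proof of the convex-hull claim is essentially the paper's proof (indeed you explicitly follow the commented-out version): find an even cycle all of whose edges carry a weight strictly between $0$ and $2$, transfer weight around it in both directions by the appropriate min, thereby writing $\tsf$ as a convex combination of two points each with strictly more extremal coordinates, and iterate. The one real addition is your argument for why such a cycle exists, which the paper asserts without justification; your reasoning is correct in substance but the parenthetical is garbled. The clean version is: if $e$ is an edge at $v_j$ with $t_e\in(0,2)$, the remaining weights at $v_j$ sum to $2-t_e\in(0,2)$, so some other edge $e'$ at $v_j$ satisfies $0<t_{e'}\le 2-t_e<2$; hence the subgraph spanned by edges with weight in $(0,2)$ has minimum degree $\ge 2$ on its support, so it contains a cycle, and the cycle has even length by bipartiteness. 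You should also note, as the paper implicitly does, that the modified weights stay $\le 2$ because at each vertex of the cycle the two cycle-edge weights already sum to at most $2$.

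The paper's proof stops there and does not address the ``Furthermore'' at all. Your ``if $G$ is a tree then unique'' direction via counting equations and unknowns is fine. The converse, however, has a genuine gap. Your perturbation argument --- adding a small multiple of the alternating $\pm1$ vector on a cycle $\gamma$ to a point of $\mathsf{L}_G$ --- requires every edge of $\gamma$ receiving a $-1$ to have strictly positive weight; but if $\mathsf{L}_G$ is a single extreme point $\tsf_\match$, all weights lie in $\{0,2\}$ and $\gamma$ need not be $\match$-alternating, so neither $\tsf_\match+\epsilon v$ nor $\tsf_\match-\epsilon v$ need remain non-negative. In fact, read as a statement about uniqueness of non-negative solutions, the converse is false: take $v_+=\{a_1,a_2,a_3\}$, $v_-=\{b_1,b_2,b_3\}$ with edges $a_1b_1,a_2b_2,a_3b_3,a_1b_2,a_1b_3,a_2b_3$; this $G$ is connected with $6>5$ edges (not a tree), yet the only perfect matching is $\{a_1b_1,a_2b_2,a_3b_3\}$, so $\mathsf{L}_G$ is a singleton. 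The ``Furthermore'' is correct if ``unique'' is read as uniqueness of the solution of $L_G$ over $\R$ (without the non-negativity constraint), since for connected $G$ the incidence matrix has rank $2n-1$, so the solution affine space is a point iff $\el=2n-1$ iff $G$ is a tree; that is the argument your dimension count was reaching for, and you should use it for both directions rather than the perturbation inside $\mathsf{L}_G$.
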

\begin{proof}
First of all, $\mathsf{L}_G$ is clearly convex. Let $\tsf\in\mathsf{L}_G$. If every coordinate of $\tsf$ is either equal to zero or two, then $\match=\coprod_{t_i=2}e_i$ is a perfect matching for $G$ and $\tsf=\tsf_{\match}$. Otherwise, there exists a loop $\gamma=e_{j_1}e_{j_2}\ldots e_{j_{2k}}$ such that $t_{j_i}\neq 0,2$ for every $i=1,\ldots, 2k$. Let 
\[t_o=\min\{t_{j_{2i+1}}\ |\ 0\le i\le k-1\}\quad \text{and} \quad t_{e}=\min\{t_{j_{2i}}\ |\ 1\le i\le k\}.\]
We construct new non-negative solutions $\tsf'=(t_1',\ldots,t_{\el'})$ and $\tsf''=(t_1'',\ldots,t_{\el}'')$ of $L_{G}$ by modifying $\tsf$ at $t_{j_1},\ldots,t_{j_{2k}}$ as follows. Let 

\begin{center}
\begin{minipage}[t]{0.4\textwidth}
\[t'_{j_i}=\begin{cases}\begin{array}{lcl}
t_{j_i}-t_{e}&\quad&i\ \text{even}\\
t_{j_i}+t_{e}&\quad&i\ \text{odd},
\end{array}
\end{cases}\]\end{minipage}
\begin{minipage}[t]{0.4\textwidth}
\[t''_{j_i}=\begin{cases}\begin{array}{lcl}
t_{j_i}+t_{o}&&i\ \text{even}\\
t_{j_i}-t_{o}&&i\ \text{odd}.
\end{array}
\end{cases}\]
\end{minipage}  
\end{center}

and $t_j'=t_{j}''=t_j$ for $j\notin\{j_1,\ldots, j_{2k}\}$. Note that both $\tsf'$ and $\tsf''$ have at least one more edge with weight zero or two comparing to $\tsf$. Moreover, $\tsf$ is on the line segment connecting $\tsf'$ to $\tsf''$ i.e. it is in their convex hull. Repeat this for $\tsf'$ and $\tsf''$. We will continue this process until we find a set of solutions where all of them correspond to perfect matchings for $G$. Since $\tsf$ belongs to their convex hull, we are done. 
\end{proof}

\begin{defn}  Suppose $\mathsf{L}_G\neq\emptyset$. For a $\tsf=(t_1,t_2,...,t_{\el})\in \mathsf{L}_G$, a $\SpinC$ structure $\spinc\in\SpinC(M)$ is called \emph{$\tsf$-torsion} if for any $H\in\Ht_2(M)$
$$\langle c_1(\spinc), H\rangle= \sum_{i=1}^{\el}t_i([T_i] \cdot H).$$
where $[T_i]\in H_1(M,\del M)$ is the homology class represented by $T_i$.
\end{defn}

For example, suppose $G$ is embedded in $S^3$ and $\del^\bullet M_{G}=\coprod_{i=1}^n R_i^\bullet$ for $\bullet\in\{+,-\}$. Then, for any $\tsf\in \mathsf{L}_G$ there is a unique $\tsf$-torsion $\SpinC$ structure, which is the class $\spinc$ where $\langle c_1(\spinc), [R_i^+]\rangle=2$ and $\langle c_1(\spinc), [R_i^-]\rangle=-2$ for every $i$. 

\begin{lem}\label{lem:tor-ind}
If the embedded graph $G$ is null-homologous, the set of $\tsf$-torsion $\SpinC$ structures on $M_G$ does not depend on the choice of $\tsf$. 
\end{lem}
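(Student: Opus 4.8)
The plan is to prove the slightly stronger statement that, when $G$ is null-homologous, the linear functional $H\mapsto\sum_{i=1}^{\el}t_i([T_i]\cdot H)$ on $H_2(M_G;\R)$ appearing on the right-hand side of the $\tsf$-torsion condition is the \emph{same} for every $\tsf\in\mathsf{L}_G$; the lemma then follows immediately, since a $\SpinC$ structure is $\tsf$-torsion for one $\tsf\in\mathsf{L}_G$ if and only if it is $\tsf'$-torsion for every $\tsf'\in\mathsf{L}_G$. If $\mathsf{L}_G$ is empty or a single point there is nothing to prove, so assume $\dim\mathsf{L}_G\ge 1$.

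So let $\tsf,\tsf'\in\mathsf{L}_G$ and set $\delta:=\tsf-\tsf'\in\R^{\el}$; we must show $\sum_{i=1}^{\el}\delta_i([T_i]\cdot H)=0$ for all $H\in H_2(M_G;\R)$. Subtracting the equations $L_j$ of~\eqref{eq:LG} for $\tsf$ and for $\tsf'$ kills the constant $2$, giving $\sum_{v_j\in\del e_i}\delta_i=0$ at every vertex $v_j$. Because $G$ is bipartite with every edge oriented from a vertex of $v_-(G)$ to a vertex of $v_+(G)$, the simplicial boundary of the $1$-chain $\sum_i\delta_i e_i$ at each vertex equals this unsigned sum up to a global sign depending only on which part the vertex lies in; hence $\delta$ is a $1$-cycle, i.e. $\delta\in H_1(G;\R)$.

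Next I would evaluate $\sum_i\delta_i([T_i]\cdot H)$ by splitting $H$. Let $Y$ be the closed three-manifold with $M_G=Y\setminus\nd(v(G))$, and write $S_v\subset\del M_G$ for the boundary sphere around $v\in v(G)$. The Mayer--Vietoris sequence for $Y=M_G\cup\nd(v(G))$ shows that $H_2(M_G)\to H_2(Y)$ is onto with kernel spanned by the $[S_v]$, so $H_2(M_G;\R)$ is spanned by the classes $[S_v]$ together with lifts of classes of $H_2(Y;\R)$, and it suffices to treat these two families. If $H=[S_v]$, then a parallel interior copy of $S_v$ meets $T_i$ transversally in one point when $v\in\del e_i$ and is disjoint from $T_i$ otherwise, and all of these intersection signs agree (the edges at $v$ all point ``out of'' $v$ or all point ``into'' $v$), so $\sum_i\delta_i([T_i]\cdot[S_v])=\pm\sum_{v\in\del e_i}\delta_i=0$ by the cycle condition on $\delta$. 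If $H$ is the image in $H_2(M_G)$ of a class of $H_2(Y)$, represent it by a surface disjoint from $\nd(v(G))$; then $\sum_i\delta_i([T_i]\cdot H)$ is the intersection number in $Y$ of this surface with the $1$-cycle obtained from $\sum_i\delta_i e_i$ by completing each $T_i$ to the full edge $e_i$ inside the removed balls, and that $1$-cycle represents $\imath_*[\delta]=0$ in $H_1(Y;\R)$ because $G$ is null-homologous — so the intersection number vanishes. This proves $\sum_i\delta_i([T_i]\cdot H)=0$ for all $H$, and hence the lemma.

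The step I expect to require the most care is the Mayer--Vietoris bookkeeping together with the orientation conventions for the pairing of the relative classes $[T_i]\in H_1(M_G,\del M_G)$ with the boundary spheres — in particular, verifying that along a single boundary sphere all the relevant local intersection signs coincide, which is where bipartiteness of $G$ enters a second time. An alternative, more combinatorial argument (avoiding $Y$ altogether) would fix a perfect matching $\match$, choose a spanning tree $S\supseteq\match$, and successively cancel the weight on each off-tree edge against its tree path, using null-homology of the resulting loop to see that $\sum_i t_i([T_i]\cdot H)$ is unchanged, reaching $\tsf_{\match}$ after finitely many steps; I would present the homological argument as the main one since it is choice-free and symmetric in $\tsf$ and $\tsf'$.
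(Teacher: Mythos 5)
Your argument is correct, and it takes a genuinely different route from the paper's. The paper fixes a perfect matching $\match$ and a spanning tree $S\supseteq\match$, then iteratively zeroes out each off-tree weight by pushing it around the corresponding fundamental cycle, invoking null-homology once per step; after finitely many steps $\tsf$ is reduced to $\tsf_{\match}$, which shows every $\tsf$ determines the same functional as $\tsf_{\match}$. You instead prove directly that the functional $H\mapsto\sum_i t_i([T_i]\cdot H)$ is constant on (the affine hull of) $\mathsf{L}_G$: the difference $\delta=\tsf-\tsf'$ of any two solutions is a $1$-cycle in $G$ (a nice observation — this is exactly where bipartiteness makes the unsigned vertex equations into a genuine boundary condition), and the functional applied to $\delta$ vanishes term by term on a Mayer--Vietoris splitting of $H_2(M_G;\R)$ into the boundary spheres $[S_v]$ (killed by the cycle condition and sign-coherence of the oriented strands at a vertex) and lifts from $H_2(Y;\R)$ (killed by $\imath_*[\delta]=0$, which is precisely null-homology of $G$). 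Your version is choice-free and symmetric in $\tsf,\tsf'$, and it separates cleanly the two roles of the hypotheses (bipartiteness/balancing versus null-homology), whereas the paper's version is more elementary in its homological input — it never needs Mayer--Vietoris, only that one loop at a time is null-homologous — and it reuses the matching/spanning-tree machinery already set up in Lemma~\ref{lem:solconvex}. The ``alternative combinatorial argument'' you sketch at the end is essentially the paper's own proof, so you have in fact reconstructed both.
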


\begin{proof}
Fix a perfect matching $\match$ for $G$. We show that the set of $\tsf$-torsion $\SpinC$ classes is equal to the set of $\tsf_{\match}$-torsion classes. If $G$ is a tree, $\tsf=\tsf_{\match}$ and we are done. Let $S$ be a spanning tree for $G$ such that $\match\subset S$. Choose an edge $e_i$ not included in $S$ such that $t_i\neq 0$. Connect the adjacent vertices of $e_i$ with a path $e_{i_1}\ldots e_{i_k}$ in $T$ to get a loop $e_ie_{i_1}\ldots e_{i_k}$ in $G$. Then, we modify $\tsf$ to get a new solution $\tsf'=(t_1',t_2',\ldots,t_{\el}')$ for $L_G$ by setting 
\[
t_j'=\begin{cases}\begin{array}{lll}
t_j&&j\neq i,i_1,i_2,\ldots, i_k\\
0&&j=i\\
t_{i_j}-t_i&& \text{if}\ j\ \text{even}\\
t_{i_{j}}+t_i&&\text{if}\ j\ \text{odd}
\end{array}
\end{cases}.
\]
Note that $\tsf'$ is not necessarily non-negative. Since $G$ is null-homologous, $[T_{i_j}]\cdot H=[T_{i_{j+1}}]\cdot H$ for any $H\in H_2(M_G)$, where $1\le j\le k$ and $e=e_{i_{k+1}}$. As a result,  
\[\sum_{i=1}^{\el}t_i([T_i]\cdot H)=\sum_{i=1}^\el t_i'([T_i]\cdot H).\]
We will continue this process until we get to a solution $\tilde{\tsf}$ for $L_G$ such that all of its non-zero coordinates correspond to the edges included in $S$. Therefore, $\tilde{\tsf}=\tsf_{\match}$ and so an $\spinc\in\SpinC(M_{G})$ is $\tsf$-torsion if and only if it is $\tsf_{\match}$-torsion.
\end{proof}

Suppose $G$ is embedded in $Y$. Given a perfect matching $\match$ for $G$, let $T_{\match}=\match\cap M_{G}$ and we call $T_{\match}$ a matching for $T_{G}$. There is a natural correspondence between 
\[s_{\match}:\SpinC(M_{G})\to \SpinC(Y)\]
defined as follows. Fix a non-zero vector field $v_{\match}$ on $T_{\match}$ so that it gives the orientation of $T_{\match}$. Any $\SpinC$ class $\spinc\in\SpinC(M_{G})$ may be represented by a nonzero vector field $v$ on $M_{G}$ such that $v|_{T_{\match}}=v_{\match}$. Then, we may modify $v$ in a small tubular neighborhood of $T_{\match}$ to construct a new vector field that extends to a non-zero vector field on $Y$. (See \cite[Section 8.1]{AE-2} for more details.) 

For instance, if $G$ is an embedded $\Theta_2$, we have two maps $s_1$ and $s_2$ corresponding to the matchings $\{e_1\}$ and $\{e_2\}$, respectively. Moreover, for any $\spinc\in\SpinC(M_G)$
 \[c_1(s_1(\spinc))-c_1(s_2(\spinc))=\mathrm{PD}[G]\]
where $[G]\in H_1(Y,\Z)$ is the homology class represented by the knot $G$ oriented as $(-e_1)\cup e_2$. Clearly, if $G$ is null-homologous, $s_{\match}$ doesn't depend on $\match$ and so we denote it by $s$. 

\begin{lem}\label{torsionspinc}
Assume $G$ is null-homologous. Then, every $\tsf$-torsion class $\spinc\in\SpinC(M_{G})$ corresponds to a torsion class $s(\spinc)\in\SpinC(Y)$.
\end{lem}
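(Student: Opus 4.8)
The plan is to reduce to a matching solution and then evaluate $c_1(s(\spinc))$ on an arbitrary class of $H_2(Y;\Z)$. Since $G$ is null-homologous, Lemma~\ref{lem:tor-ind} shows that the set of $\tsf$-torsion $\SpinC$ structures on $M_G$ does not depend on $\tsf\in\mathsf{L}_G$, and the correspondence $s_{\match}$ does not depend on the perfect matching $\match$; hence it suffices to fix one perfect matching $\match=\coprod_{i=1}^{n}e_{j_i}$ and prove the statement for $\tsf=\tsf_{\match}$, for which $t_j=2$ when $e_j\in\match$ and $t_j=0$ otherwise.

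The first step I would carry out is a lifting statement for second homology under the inclusion $i\colon M_G\to Y$. I claim that for every $h\in H_2(Y;\Z)$ there is a class $H\in H_2(M_G;\Z)$ with $i_*H=h$ and $[T_j]\cdot H=0$ for every strand $T_j\subset T_{\match}$, where the pairing is between $H_2(M_G)$ and $H_1(M_G,\del M_G)\ni[T_j]$. This holds because $Y$ is recovered from $M_G$ by filling each boundary sphere with a ball, so $i_*$ is surjective, and a surface representing a preimage of $h$ can be isotoped off a neighborhood of the properly embedded $1$-manifold $T_{\match}$ without changing its class in $H_2(M_G)$ or its image in $H_2(Y)$. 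This is precisely the lifting underlying the construction of $s_{\match}$ by extension of vector fields across $\mathrm{nd}(T_{\match})$; see \cite[Section~8.1]{AE-2}.

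Granting this, I would finish with the direct computation, using that the vector-field modification defining $s_{\match}$ is supported near $T_{\match}$ and that $H$ has been chosen disjoint from it there, so $\langle c_1(s(\spinc)),h\rangle=\langle c_1(\spinc),H\rangle$. Since $\spinc$ is $\tsf_{\match}$-torsion,
\[
\langle c_1(s(\spinc)),h\rangle=\langle c_1(\spinc),H\rangle=\sum_{j=1}^{\el}t_j\,\big([T_j]\cdot H\big)=\sum_{T_j\subset T_{\match}}2\,\big([T_j]\cdot H\big)=0,
\]
and, $h$ being arbitrary, $c_1(s(\spinc))$ is torsion.

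The main obstacle is the middle step: identifying the correction term that relates $\langle c_1(s(\spinc)),h\rangle$ to $\langle c_1(\spinc),H\rangle$ with $\sum_{T_j\subset T_{\match}}2([T_j]\cdot H)$ --- equivalently, bookkeeping the winding of the extended vector field across the matching strands --- and simultaneously arranging the lift $H$ to meet every component of $T_{\match}$ trivially. These are standard $\SpinC$ arguments of the same flavor as comparing $\SpinC$ structures across surgery on a framed link, but they need care with orientations and with the identification of $[T_j]$ in $H_1(M_G,\del M_G)$.
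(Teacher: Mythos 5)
Your proof is correct and follows the same route as the paper: reduce via Lemma~\ref{lem:tor-ind} to the case $\tsf=\tsf_{\match}$, lift $h\in H_2(Y)$ to $H\in H_2(M_G)$ with $[T_j]\cdot H=0$ for all $T_j\subset T_{\match}$, and then run the displayed chain of equalities. One small caution on the lifting step: an isotopy of a representative surface cannot change the homological intersection numbers $[T_j]\cdot H$, so the correct justification is to \emph{modify the class} $H$ by adding boundary-sphere classes (which span the kernel of $i_*\colon H_2(M_G)\to H_2(Y)$ and pair nontrivially with exactly the $[T_j]$ whose endpoints lie on that boundary sphere) until all $[T_j]\cdot H$ for $T_j\subset T_{\match}$ vanish; the paper likewise asserts this lift without spelling it out, so your argument is on par.
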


\begin{proof} By Lemma \ref{lem:tor-ind} it is enough to show this for $\tsf=\tsf_{\match}$ for some perfect matching $\match$.  For every $h\in H_2(Y)$ there exists a class $H\in H_2(M_{G})$ such that $i_{*} H=h$ and $[T_i]\cdot H=0$ for every $T_i\subset T_\match$. Here, $i:M_{G}\to M$ denotes the inclusion map. Therefore, for any $\tsf$-torsion class $\spinc\in \SpinC(M_{G})$ we have
\[\langle c_1(s(\spinc)),h\rangle=\langle c_1(\spinc),H\rangle=\sum_{T_i\subset \match} 2([T_i]\cdot H)=0.\]
\end{proof}


\subsubsection{Relative grading} Assume $(M,T)=(M_G,T_G)$ is a balanced tangle corresponding to an embedded graph $G\subset Y$ with $\mathsf{L}_G\neq\emptyset$. For any $\tsf\in\mathsf{L}_G$ and any $\tsf$-torsion $\SpinC$ class $\spinc\in\SpinC(M)$, we define a relative $\R$-grading on $\HFT(M,T,\spinc)$, denoted by $\grt$, as follows.

Consider an $\spinc$-admissible Heegaard diagram 
\[\HD=(\Sigma,\alphas,\betas,\z=\{z_1,...,z_{\el}\})\]
for $(M,T)$, where $z_i=T_i\cap \Sig$. Then, $\CFT(\HD,\spinc)$ is a chain complex with coefficients in $\Ring_G$ and we can define $\grt$ by setting:

\begin{equation}\label{Def:rel-grading}
\begin{cases}
\grt(\x)-\grt(\y)=\mu(\phi)-\sum_{j=1}^{\el}t_jn_{z_j}(\phi)\\
\grt(\la_i.\x)=\grt(\x)-t_i
\end{cases}
\end{equation}
where $\phi\in\pi_2(\x,\y)$.
\begin{lem}
With the above assumptions fixed, $\grt$ is well-defined.
\end{lem}

\begin{proof}
Let $\phi'\in\pi_2(\x,\y)$ be another Whitney disk from $\x$ to $\y$. Denote $\Pcal=\Dcal(\phi)-\Dcal(\phi')$, where $\Dcal(\phi)$ and $\Dcal(\phi')$ are the domains of $\phi$ and $\phi'$, respectively. Then, 
\[\mu(\phi)-\mu(\phi')=\langle c_1(\spinc),H(\Pcal)\rangle=\sum_{i=1}^{\el}t_i n_{z_i}(\Pcal)=\sum_{i=1}^{\el}t_i\left(n_{z_i}(\phi)-n_{z_i}(\phi')\right).\]
Here, $H(\Pcal)\in H_2(M_{G})$ denotes the homology class represented by $\Pcal$. Note that the second equality holds because $\spinc$ is  $\mathsf{t}$-torsion. Therefore, \[\mu(\phi)-\sum_{i=1}^{\el}t_in_{z_i}(\phi)=\mu(\phi')-\sum_{i=1}^{\el}t_in_{z_i}(\phi')\]
and so $\grt$ is well-defined.
\end{proof}

\begin{defn} Let $\Ring$ be an $\R_{\ge 0}$-graded $\F$-algebra. For any $\tsf\in\mathsf{L}_T$, a coloring $\cl:\pi_0(T)\to \Ring$ of $T$ is called \emph{$\tsf$-graded} if $\gr(\cl[T_i])=t_i$ for all $i=1,2,\cdots,\el$, where $\gr$ denotes the grading on $\Ring$.

%

%
\end{defn}

\begin{lem}
Suppose $\cl:\pi_0(T)\to\Ring$ is a $\tsf$-graded coloring on $(M,T)$, then $\grt$ induces a grading on $\HFT(\HD,\cl,\spinc)$, where $(\HD,\cl,\spinc)$ is a Heegaard diagram for the $\Ring$-tangle $\Tangle=[M,T,\cl,\spinc]$. Moreover, it induces a grading on $\HFT^{\Rin}(\HD,\cl,\spinc)$ for any graded $\Ring$-module $\Rin$.
\end{lem}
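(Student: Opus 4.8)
The plan is to deduce the statement from the description of $\CFT(\HD,\cl,\spinc)$ as a base change of $\CFT(\HD,\spinc)$ along a grading preserving algebra map, so that beyond bookkeeping essentially no new argument is required.

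First I would record that a $\tsf$-graded coloring $\cl\colon\pi_0(T)\to\Ring$ induces an $\F$-algebra homomorphism $\varphi\colon\Ring_G\to\Ring$ with $\varphi(\la_i)=\cl([T_i])$, and that $\varphi$ is grading preserving: by the definition of a $\tsf$-graded coloring we have $\gr(\varphi(\la_i))=\gr(\cl[T_i])=t_i=\grt(\la_i)$ for every $i$, and the $\la_i$ generate $\Ring_G$. Since $\CFT(\HD,\cl,\spinc)=\CFT(\HD,\spinc)\otimes_{\Ring_G,\varphi}\Ring$ and $\CFT(\HD,\spinc)$ is free over $\Ring_G$ on the generators $\x\in\Ta\cap\Tb$, the relative $\R$-grading $\grt$ on $\CFT(\HD,\spinc)$ — well defined by the preceding lemma, once a base value is fixed in each relative $\SpinC$-summand — extends to $\CFT(\HD,\cl,\spinc)$ by assigning to a homogeneous element $r\cdot\x$ (with $r\in\Ring$) the grading $\grt(\x)-\gr(r)$. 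This is consistent with the convention $\grt(\la_i.\x)=\grt(\x)-t_i$ of (\ref{Def:rel-grading}) precisely because $\varphi$ is graded.

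Next I would verify that the differential of $\CFT(\HD,\cl,\spinc)$ lowers $\grt$ by $1$. A class $\phi\in\pi_2(\x,\y)$ with $\mu(\phi)=1$ contributes $\#\widehat{\mathcal{M}}(\phi)\,\cl(\phi)\cdot\y$ to $\del\x$, where $\cl(\phi)=\prod_{i=1}^{\el}\cl(z_i)^{n_{z_i}(\phi)}$; since $\cl$ is $\tsf$-graded, $\gr(\cl(\phi))=\sum_{i=1}^{\el}t_i\,n_{z_i}(\phi)$, and therefore
\[
\grt\big(\cl(\phi)\cdot\y\big)=\grt(\y)-\sum_{i=1}^{\el}t_i\,n_{z_i}(\phi)=\grt(\x)-\mu(\phi)=\grt(\x)-1,
\]
the middle equality being the defining relation (\ref{Def:rel-grading}). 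Hence $\del$ is homogeneous of degree $-1$, and the homology $\HFT(\HD,\cl,\spinc)=H_*(\CFT(\HD,\cl,\spinc))$ inherits the relative $\R$-grading $\grt$, which is the first assertion. For a graded $\Ring$-module $\Rin$ one has $\CFT^{\Rin}(\HD,\cl,\spinc)=\CFT(\HD,\cl,\spinc)\otimes_{\Ring}\Rin$ with differential $\del\otimes\Id_{\Rin}$; grading a homogeneous simple tensor $x\otimes m$ by $\grt(x)+\gr(m)$ is well defined on the balanced tensor product — again because $\grt(r x)=\grt(x)-\gr(r)$ and $\gr(rm)=\gr(r)+\gr(m)$ for homogeneous $r\in\Ring$ — and $\del\otimes\Id_{\Rin}$ still has degree $-1$, so $\HFT^{\Rin}(\HD,\cl,\spinc)$ is graded as well.

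The only point requiring care, and it is bookkeeping rather than mathematics, is keeping the three grading conventions mutually compatible: the $\Ring_G$-module grading on $\CFT(\HD,\spinc)$ in which $\la_i$ lowers degree by $t_i$, the algebra grading transported by $\varphi$, and the module grading on $\Rin$. Once these are pinned down as above, each step is a routine check and no geometric input beyond the well-definedness of $\grt$ on $\CFT(\HD,\spinc)$ is used; indeed the argument runs exactly parallel to the treatment of the universal coloring.
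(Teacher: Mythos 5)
Your proof is correct and follows exactly the route the paper itself takes (the paper just writes $\HFT(\HD,\cl,\spinc)=H_*\left(\CFT(\HD,\spinc)\otimes_{\phi}\Ring\right)$ and invokes $\tsf$-gradedness of $\cl$; you have spelled out that $\varphi$ is grading-preserving, that the induced grading is consistent on the balanced tensor product, and that the differential is homogeneous of degree $-1$). The treatment of $\HFT^{\Rin}$ via tensoring with a graded $\Ring$-module is likewise the intended reading of the paper's "the second part is obvious."
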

\begin{proof}
Note that $\HFT(\HD,\cl,\spinc)=\Ht_{*}\left(\CFT(\HD,\spinc)\otimes_{\phi}\Ring\right)$ and so the claim holds because $\cl$ is $\tsf$-graded. The second part is obvious.
\end{proof}

%
%
%

\begin{lem} For any $\tsf\in\mathsf{L}_T$, $\tsf$-torsion class $\spinc\in\SpinC(M)$ and $\tsf$-graded $\Ring$-coloring $\cl$ of $T$, $\grt$ induces a relative grading on $\HFT(M,T,\cl,\spinc)$ and so $\HFT^{\Rin}(M,T,\cl,\spinc)$ for any graded $\Ring$-module $\Rin$.
\end{lem}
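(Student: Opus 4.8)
The plan is to retrace the proof that tangle Floer homology is a tangle invariant from \cite{AE-1,AE-2} and to check that every chain homotopy equivalence occurring there is homogeneous with respect to $\grt$, i.e.\ shifts $\grt$ by a constant; discarding these constants then yields a relative $\R$-grading on the homology that is independent of all choices. First I would record that the differential $\del$ on $\CFT(\HD,\cl,\spinc)$ drops $\grt$ by exactly one: a Maslov index one disk $\phi\in\pi_2(\x,\y)$ contributes $\cl(\phi)\cdot\y=\prod_{i=1}^{\el}\la_i^{n_{z_i}(\phi)}\cdot\y$ to $\del\x$, and Equation \ref{Def:rel-grading} gives $\grt(\x)-\grt\bigl(\cl(\phi)\cdot\y\bigr)=\bigl(\mu(\phi)-\sum_j t_jn_{z_j}(\phi)\bigr)+\sum_j t_jn_{z_j}(\phi)=\mu(\phi)=1$, so each $\grt$-homogeneous summand of $\CFT(\HD,\cl,\spinc)$ is a subcomplex and $\grt$ descends to the homology of the fixed diagram.

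Next I would run through the Heegaard moves relating any two $\spinc$-admissible diagrams for $(M,T)$ carrying $\cl$ --- isotopies of $\alphas$ and $\betas$ missing $\z$, handleslides among the $\alpha$- or the $\beta$-curves, and index one/two (de)stabilizations --- and verify homogeneity move by move. Isotopies are realized by continuation maps counting index zero disks, which by the same computation shift $\grt$ by $\mu(\phi)-\sum_j t_jn_{z_j}(\phi)=0$ after accounting for the $\cl(\phi)$ factor, hence preserve $\grt$. For a handleslide from $\betas$ to $\betas'$ one uses the triangle map $F(\x)=\sum_{\y}\sum_{\psi\in\pi_2(\x,\Theta,\y),\,\mu(\psi)=0}\#\widehat{\mathcal M}(\psi)\,\cl(\psi)\cdot\y$ attached to the triple $(\Sigma,\alphas,\betas,\betas',\z)$, where $\Theta$ is the top generator of the auxiliary tangle $(\Sigma,\betas,\betas',\z)$; because $\mu$ and the multiplicities $n_{z_j}$ are additive under gluing a Whitney disk onto a triangle, the quantity $\mu(\psi)-\sum_j t_jn_{z_j}(\psi)$ records $\grt(\x)+\grt(\Theta)-\grt(\cl(\psi)\cdot\y)$ and depends only on the corners and the homotopy class. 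Two classes with the same corners differ by periodic domains in $(\Sigma,\alphas,\betas,\z)$, $(\Sigma,\betas,\betas',\z)$ and $(\Sigma,\alphas,\betas',\z)$, and on each of these the term $\mu-\sum_j t_jn_{z_j}$ vanishes: for the first this is the $\tsf$-torsion hypothesis on $\spinc$, and for the other two it holds because the auxiliary $\betas\betas'$-tangle --- still colored by $\cl$, hence with the same weights $\tsf$ --- carries a $\tsf$-torsion $\SpinC$ structure restricted from the product region. Thus all index zero classes contributing to $F$ give the constant shift $-\grt(\Theta)$, which is well defined since $\Theta$ is the unique top generator, so $F$ is homogeneous. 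A (de)stabilization inserts a canonical cancelling pair of generators joined by an embedded Maslov index one bigon with all $n_{z_j}=0$, and is grading preserving on the nose.

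Assembling these, the composite invariance map $\Phi\colon\CFT(\HD,\cl,\spinc)\to\CFT(\HD',\cl,\spinc)$ is homogeneous of some degree, and after one overall shift it preserves $\grt$; the compatibility of the transition maps proved in \cite{AE-1,AE-2} then upgrades this to a relative $\R$-grading $\grt$ on $\HFT(M,T,\cl,\spinc)$ independent of the Heegaard diagram. Since all maps in sight are $\Ring$-linear, they extend over $\otimes_{\Ring}\Rin$ for any graded $\Ring$-module $\Rin$, and with the sum grading on $\CFT(\HD,\cl,\spinc)\otimes_{\Ring}\Rin$ this gives the induced grading on $\HFT^{\Rin}(M,T,\cl,\spinc)$, finishing the lemma. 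The hard part will be the handleslide step: one must make sure the additivity bookkeeping for $\mu$ and the weighted multiplicities $\sum_j t_jn_{z_j}$ survives all the triangle and square gluings appearing in the naturality and associativity arguments of \cite{AE-1,AE-2}, and that the $\tsf$-torsion condition is inherited by every auxiliary diagram so that the relevant top generators have well-defined $\grt$. This is essentially the bookkeeping already carried out for the mod $\mathfrak d$ relative grading in \cite{AE-1}, now carrying along the extra $\R$-valued term $\sum_j t_jn_{z_j}$, which is manifestly additive, so the obstacle is organizational rather than conceptual.
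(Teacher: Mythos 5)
Your proposal is correct and takes the same route as the paper: the paper's proof is a one-line deferral to the invariance argument of \cite{AE-1,AE-2}, and you have simply carried out in detail the move-by-move verification (differential, isotopy, handleslide triangle maps, (de)stabilization) that this deferral tacitly entails, including the key point that the $\tsf$-torsion hypothesis makes $\mu-\sum_j t_j n_{z_j}$ vanish on periodic domains so the triangle maps are $\grt$-homogeneous.
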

\begin{proof}
Similar to the invariance proof of tangle Floer homology, see \cite{AE-1, AE-2}. \end{proof}

\subsubsection{Absolute grading} As before, $(M,T)$ is a balanced tangle corresponding to an embedded graph $G$. In this section, we assume $G$ is null-homologous, and for any $\tsf\in\mathsf{L}_G$ and any $\tsf$-torsion class $\spinc\in\SpinC(M)$, we lift $\grt$ to an absolute grading. Since $G$ is null-homologous and $\mathsf{L}_G\neq \emptyset$ we identify $\SpinC(M)$ with $\SpinC(Y)$, as before, and drop $s$ from the notation.

Suppose $\tsf=\tsf_{\match}$ for some perfect matching $\match$ for $G$. We lift the grading $\gr_{\match}=\gr_{\tsf_{\match}}$ to an absolute grading on $\CFT(M,T,\spinc)$ so that the induced grading on $\HFT^-(Y,\spinc)$ is the Maslov grading. More precisely, suppose $T_{\match}=\coprod_{i=1}^n T_{m_i}$, and consider the $\F[U_1,\cdots,U_n]$-coloring $\cl_{\match}$ of $T$ defined by setting $\cl_{\match}([T_j])=U_i$ if $j=m_i$ for some $1\le i\le n$, otherwise $\cl_{\match}([T_j])=1$. Then, $\cl_{\match}$ is $\tsf_{\match}$-graded and by definition 
\[\CFT(M,T,\cl_{\match},\spinc)\simeq \CFT^-(Y,\spinc).\]
Further, the relative $\Z$-grading $\grt$ on $\CFT(M,T,\cl_{\match},\spinc)$ coincides with the relative Maslov grading on $\CFT^-(Y,\spinc)$. Since,  $\spinc$ is torsion, relative Maslov grading on $\CFT^-(Y,\spinc)$ lifts to an absolute grading. Consequently, we get a lift of $\gr_{\match}$to an absolute grading on $\CFT(M,T,\cl_{\match},\spinc)$, which will give a lift of $\gr_{\match}$ to an absolute $\Z$-grading on $\CFT(M,T,\spinc)$.

Suppose $\tsf=a_1\tsf_{\match_1}+\ldots+a_k\tsf_{\match_k}$ where $a_i>0$ and $\match_i$ is a perfect matching for every $i$. It is easy to see that for any $\x,\y\in\Ta\cap \Tb$
\[\grt(\x)-\grt(\y)=\left(a_1\gr_{\match_1}(\x)+\ldots+a_k\gr_{\match_k}(\x)\right)-\left(a_1\gr_{\match_1}(\y)+\ldots+a_k\gr_{\match_k}(\y)\right).\]

We inductively put a $\Delta$-complex structure on $\mathsf{L}_G$. If $\dim\mathsf{L}_G=1$, then $\mathsf{L}_G$ is a one-simplex with two vertices, so a $\Delta$-complex. Equip finite set of weights corresponding to the perfect matchings i.e. the vertices of the polygon $\mathsf{L}_G$ or
\[\{\tsf_{\match}\ |\ \match:\ \text{perfect matching for}\ G\}\] with lexicographic ordering. Let $\tsf_{\min}$ be the smallest point.
If $\dim \mathsf{L}_G=d$, boundary of $\mathsf{L}_G$ is a union of polygons of dimension $d-1$. Let $\del_0\mathsf{L}_G$ be the union of boundary polygons that do not contain $\tsf_{\min}$. By induction, we put a $\Delta$-complex structure on every polygon in $\del_0\mathsf{L}_G$, and define a $\Delta$-complex structure on $\mathsf{L}_G$ so that its $d$-simplicies are the cone of the $(d-1)$-simplicies in $\del_0\mathsf{L}_G$ at $\tsf_{\min}$. 
%

For a given $\tsf\in L_G$, consider the $i$-simplex $\Delta^i$ so that $\tsf\in\mathrm{int}(\Delta^i)$. Assume $\Delta^i$ is given by the convex hull of $\{\tsf_{\match_1},\cdots,\tsf_{\match_{i+1}}\}$. We lift $\grt$ to an absolute grading by setting 
\[\grt(\x)=n_1\gr_{\match_1}(\x)+\ldots+n_i\gr_{\match_{i+1}}(\x)\]
where $\tsf=n_1\tsf_{\match_1}+\ldots+n_{i+1}\tsf_{\match_{i+1}}$.

Consequently, for any $\tsf$-graded $\Ring$-coloring $\cl$ on $(M,T)$, and any graded $\Ring$-module $\Rin$, we get an absolute grading $\grt$ on $\HFT^{\Rin}(M,T,\cl,\spinc)$.

\begin{example} \label{ex:link} Suppose that the graph $L=\coprod_{i=1}^{n}L_i$ is an $n$-component link embedded in $S^3$ such that each connected component $L_i$ of $L$ contains exactly two vertices. Then, we label the edges by $1,2, \ldots, 2n$ such that $L_i=e_i\cup e_{n+i}$. Note that this labeling specifies an orientation on $L$ by assuming $L_i=e_i\cup (-e_{n+i})$. Any Heegaard diagram $\HD=(\Sig,\alphas,\betas,\{z_1,z_2,\ldots,z_{2n}\})$ for the corresponding tangle, will give a pointed Heegaard diagram in the sense of \cite[Definition 3.7]{OS-linkinvariant} for $L$ by calling each base point $z_{n+i}$, $w_{i}$.

Suppose $\match_{\w}=\coprod_{i=1}^n e_{i+n}$ and $\match_{\z}=\coprod_{i=1}^n e_{i}$. Then, $\tsf_{\min}=\tsf_{\match_{\w}}$ and $\tsf_{\max}=\tsf_{\match_{\z}}$.  Note that $\mathsf{L}_L$ is an $n$-dimensional cube in $\R^{2n}$ defined by the linear equations 
\[\left\{t_i+t_{n+i}=2\ | \ 1\le i\le n\ \ \text{and}\ \ t_i\in[0,2]\right\}.\]
Further, boundary cubes in $\del\mathsf{L}_L$ that are not adjacent to $\tsf_{\min}$ contain $\tsf_{\max}$ as a $0$-simplex. Thus, the diagonal connecting $\tsf_{\min}$ to $\tsf_{\max}$ is a $1$-simplex in the $\Delta$-complex structure on $\mathsf{L}_L$. For any $t\in [0,2]$, let
\[\tsf=\frac{1}{2}\left(t \tsf_{\max}+(2-t)\tsf_{\min}\right).\]  
By definition, $\grt(\x)=\frac{1}{2}(t \gr_{\match_\z}(\x)+(2-t)\gr_{\match_\w}(\x))$. On the other hand, $\gr_{\match_{\w}}=\gr_{\w}$ and $\gr_{\match_{\z}}=\gr_{\z}$, defined in \cite{Zemke-gr}. Therefore, for any $\x\in\Ta\cap\Tb$
\[\gr_{\match_{\w}}(\x)=M(\x),\quad\quad\text{and}\quad\quad A(\x)=\frac{\gr_{\match_{\w}}(\x)-\gr_{\match_\z}(\x)}{2},\]
where $M$ and $A$ denote the Maslov and Alexander gradings, respectively. Thus, $\grt(\x)=M(\x)-tA(\x)$ which is equal to $\gr_t(\x)$ defined in \cite{OSS-upsilon}.

%
\end{example}
%

%

\section{The $\tvec$-modified Chain Complex and Upsilon}\label{sec:upsilon}
Assume the graph $G\subset Y$ is null-homologous, and $(M_{G},T_{G})$ is the corresponding tangle. We will define the $\tsf$-modified chain complex $\tsf\CFT(M_G,T_G,\spinc)$ for any $\tsf\in\mathsf{L}_G$ and any $\tsf$-torsion class $\spinc\in\SpinC(M_G)$ as follows.

Let $\Rring$ be the ring of long power series defined in \cite{OSS-upsilon}. Specifically, it is the group of formal sums
\[\left\{\sum_{t\in A}\la^t\ |\ A\subset\R_{\ge 0},\ A\ \text{well-ordered}\right\},\]
where $\R_{\ge 0}$ is the set of nonnegative real numbers, and the order on $A$ is induced from $\R$. Moreover, the multiplication in $\Rring$ is given by 
\[\left(\sum_{t\in A}\la^t\right)\cdot\left(\sum_{s\in B}\la^{s}\right)=\sum_{r\in A+B}\#\{(t,s)\in A\times B\ |\ t+s=r\}\cdot \la^r\]
where $A+B$ is the image of the summation map from $A\times B$ to $\R_{\ge 0}$, and the coefficient of $\la^r$ is the number modulo two. Note that $A+B$ is well-ordered. We may assume $\Rring$ is $\R_{\ge 0}$-graded such that the grading of $\la^t$ is $t$. 

To any $\tsf\in\mathsf{L}_G$ we assign an $\Rring$-coloring $\cl_{\tsf}$ of $(M_{G},T_{G})$ by setting $\cl_{\tsf}(T_i)=\la^{t_i}$. We define 
\[\tsf\CFT(M_{G},T_{G},\spinc):=\CFT(M_{G},T_{G},\spinc,\cl_{\tsf}).\]
Considering $\grt$ on $\CFT(M_{G},T_{G},\spinc)$, the coloring $\cl_{\tsf}$ is $\tsf$-graded, so $\grt$ induces a grading on $\tsf\CFT(M_G,T_G,\spinc)$, still denoted by $\grt$.


\begin{lem} \label{lem:infty1}
For any $\tsf\in\mathsf{L}_G$ 
\[\tsf\CFT(M_{G},T_{G},\spinc)\otimes \Rring^{*}\simeq\tsf_{\match}\CFT(M_{G},T_{G},\spinc)\otimes\Rring^{*}\] where $\match$ is any perfect matching and $\spinc$ is a torsion class. Here, $\Rring^*$ is the ring of fractions of $\Rring$. 

\end{lem}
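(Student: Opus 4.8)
The plan is to write both complexes as base changes of the universal complex $\CFT(\HD,\spinc)$ over $\Ring_G$ and then produce an explicit isomorphism over $\Rring^*$ by rescaling each generator by a power of $\la$; the power will be governed by the multi-variable (Alexander-type) grading recorded by the basepoints $\z$, and the two torsion hypotheses on $\spinc$ are exactly what make this rescaling commute with the differential.

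\emph{Setup.} Fix an $\spinc$-admissible Heegaard diagram $\HD=(\Sig,\alphas,\betas,\z)$ for $(M_{G},T_{G})$ with $z_i=T_i\cap\Sig$. The complex $\CFT(\HD,\spinc)$ is free over $\Ring_G$ on $\Ta\cap\Tb$, with differential involving $\cl(\phi)=\prod_i\la_i^{n_{z_i}(\phi)}$. By definition $\tsf\CFT(M_{G},T_{G},\spinc)$ is the base change along $\cl_{\tsf}\colon\la_i\mapsto\la^{t_i}$, so as a free $\Rring$-module on $\Ta\cap\Tb$ its differential is
\[\del_{\tsf}\x=\sum_{\y}\sum_{\substack{\phi\in\pi_2(\x,\y)\\ \mu(\phi)=1}}\#\widehat{\mathcal{M}}(\phi)\,\la^{\sum_i t_i n_{z_i}(\phi)}\,\y,\]
and likewise $\del_{\tsf_{\match}}$ has the same shape with each $t_i$ replaced by $(\tsf_{\match})_i\in\{0,2\}$. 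Recall from \cite{OSS-upsilon} that in the ring of fractions $\Rring^*$ every element $\la^t$, $t\in\R$, is a unit.

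\emph{The key point.} For $\x,\y$ with $\pi_2(\x,\y)\neq\emptyset$, set
\[h(\x,\y):=\sum_{i=1}^{\el}\bigl((\tsf_{\match})_i-t_i\bigr)\,n_{z_i}(\phi),\qquad \phi\in\pi_2(\x,\y).\]
This is independent of $\phi$: two choices differ by a periodic domain $\Pcal$ representing a class $H(\Pcal)\in\Ht_2(M_{G})$, and since the $z_i$ lie off the compressing disks we have $n_{z_i}(\Pcal)=[T_i]\cdot H(\Pcal)$, so the ambiguity is $\sum_i\bigl((\tsf_{\match})_i-t_i\bigr)\bigl([T_i]\cdot H(\Pcal)\bigr)$. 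Because $\spinc$ is $\tsf$-torsion and, by Lemma \ref{lem:tor-ind} (this is where null-homology of $G$ is used), also $\tsf_{\match}$-torsion, both $\sum_i t_i([T_i]\cdot H)$ and $\sum_i(\tsf_{\match})_i([T_i]\cdot H)$ equal $\langle c_1(\spinc),H\rangle$ for all $H\in\Ht_2(M_{G})$, so the ambiguity vanishes. Since disks compose additively, $h(\x,\y)+h(\y,\z)=h(\x,\z)$; choosing a base generator in each class connected by disks, we obtain a function $h\colon\Ta\cap\Tb\to\R$ with $h(\x,\y)=h(\y)-h(\x)$.

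\emph{Conclusion.} Define $\Phi\colon\tsf\CFT(M_{G},T_{G},\spinc)\otimes\Rring^*\to\tsf_{\match}\CFT(M_{G},T_{G},\spinc)\otimes\Rring^*$ by $\Phi(\x)=\la^{h(\x)}\x$ on generators, extended $\Rring^*$-linearly; it is an $\Rring^*$-module isomorphism since each $\la^{h(\x)}$ is a unit in $\Rring^*$. It is a chain map: in $\del_{\tsf_{\match}}\Phi(\x)$ the contribution of $\phi\in\pi_2(\x,\y)$ carries the exponent $h(\x)+\sum_i(\tsf_{\match})_i n_{z_i}(\phi)=h(\x)+\sum_i t_i n_{z_i}(\phi)+h(\x,\y)=\sum_i t_i n_{z_i}(\phi)+h(\y)$, which is precisely the exponent appearing in $\Phi(\del_{\tsf}\x)=\sum\#\widehat{\mathcal{M}}(\phi)\,\la^{\sum_i t_i n_{z_i}(\phi)}\la^{h(\y)}\,\y$. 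Hence $\Phi$ is an isomorphism of chain complexes, a fortiori a homotopy equivalence $\simeq$; moreover it carries $\grt$ to $\gr_{\tsf_{\match}}$ up to an overall shift, so it respects the $\R$-gradings in the appropriate sense.

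\emph{Main obstacle.} The only real subtlety is the well-definedness of $h$, i.e. the identity $n_{z_i}(\Pcal)=[T_i]\cdot H(\Pcal)$ for periodic domains together with the precise form of the torsion condition; but this is the same bookkeeping already used for the relative grading $\grt$ in Section \ref{sec:grading}, and the definition of $\tsf$-torsion has been arranged exactly so that the two exponent systems differ by an exact term. Note that nothing above uses special structure of $\Ring_G$ (polynomial or not): only that $\cl_{\tsf}$ and $\cl_{\tsf_{\match}}$ are genuine colorings, which holds because $\tsf,\tsf_{\match}\in\mathsf{L}_G$.
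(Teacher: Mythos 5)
Your proof is correct and follows essentially the same strategy as the paper's: rescale each generator $\x$ by $\la$ raised to a power determined by the basepoint multiplicities of a connecting Whitney disk, and use the torsion hypothesis (together with Lemma~\ref{lem:tor-ind}, which makes $\spinc$ both $\tsf$- and $\tsf_{\match}$-torsion since $G$ is null-homologous) to make the power well-defined. The only stylistic difference is that the paper normalizes $\phi$ so that $n_{z_i}(\phi)=0$ at the matching basepoints and uses the exponent $-\sum_i t_i n_{z_i}(\phi)$, while you allow arbitrary $\phi$ and use the equivalent exponent $\sum_i((\tsf_{\match})_i-t_i)\,n_{z_i}(\phi)$, which makes well-definedness slightly more transparent.
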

\begin{proof}
Let \[\HD=(\Sig,\alphas,\betas,\cl_{\tsf}:\z\to \Rring,\spinc)\] be a Heegaard diagram for the $\Rring$-tangle $\Tangle=[M_G,T_G,\cl_{\tsf},\spinc]$, where $\z=\coprod_{i=1}^{\el}z_i$. Further, assume $\z_{\match}\subset\z$ denotes the base points corresponding to $\match$. For simplicity, suppose $\z_{\match}=\coprod_{i=1}^nz_i$. Replacing $\cl_{\tsf}$ with $\cl_{\tsf_{\match}}$ we get a diagram for $[M_{G},T_G,\cl_{\tsf_{\match}},\spinc]$, denoted by $\HD_{\match}$. 

Pick a relative $\SpinC$ class $\relspinc_0$ and an intersection point $\x_0\in\Ta\cap \Tb$ such that $\relspinc(\x_0)=\relspinc_0$ and $[\relspinc_0]=\spinc$. Let $f:\CFT(\HD)\otimes\mathcal{R}^*\to\CFT(\HD_{\match})\otimes \mathcal{R}^*$ be the map defined by 
\[f(\x)=\la^{-\sum_{i=1}^{\el}t_in_{z_i}(\phi)}\x\]
 where $\phi\in\pi_2(\x_0,\x)$ so that $n_{z_i}(\phi)=0$ for $i=1,\ldots, n$. First, we show that $f$ is well-defined. Let $\phi'\in\pi_2(\x_0,\x)$ be another Whitney disk with $n_{z_i}(\phi)=0$ for any $i=1,\ldots, n$. Then, $\Pcal=\phi'*\phi^{-1}$ is a periodic domain, and by the proof of Lemma \ref{lem:tor-ind} \[\left(\sum_{i=1}^{\el}t_in_{z_i}(\phi')\right)-\left(\sum_{i=1}^{\el}t_in_{z_i}(\phi)\right)=\sum_{i=1}^{\el}t_in_{z_i}(\Pcal)=\sum_{i=1}^{n}2n_{z_i}(\Pcal)=0.\]
Note that if $\relspinc(\x)=\relspinc_0$, then there is a disk $\phi\in\pi_2(\x_0,\x)$ such that $n_{z_i}(\phi)=0$ for all $i=1,\ldots,\el$. Therefore, $f(\x)=\x$. This observation also implies that $f$ does not depend on the choice of $\x_0$. 

Next, we show that $f$ is a chain map. Assume $\phi\in\pi_2(\x_0,\x)$ and $\phi'\in\pi_2(\x,\y)$ are such that $n_{z_i}(\phi)=0$ for all $i=1,\ldots, n$ and $\phi'$ contributes to $\del \x$. Then,
\[\del\circ f(\x)=\la^{-\left(\sum_{i=1}^{\el}t_in_{z_i}(\phi)\right)+\left(\sum_{i=1}^n2n_{z_i}(\phi')\right)}\x\]

Let $\Sig\setminus\alphas=\coprod_{i=1}^nA_i$ such that $z_i\in A_i$. Consider the Whitney disk $\phi''\in\pi_2(\x_0,\y)$ so that $\Dcal(\phi'')=\Dcal(\phi)+\Dcal(\phi')-\sum_{i=1}^nn_{z_i}(\phi')A_i$. Then, $n_{z_i}(\phi'')=0$ for all $i=1,\ldots,n$ and so \[f(\y)=\la^{-\left(\sum_{i=1}^{\el}t_i(n_{z_i}(\phi)+n_{z_i}(\phi'))\right)+\left(\sum_{i=1}^{n}2n_{z_i}(\phi')\right)}\y.\]
Thus, $f\circ\del (\x)=\del\circ f(\x)$ and $f$ is a chain map. Since, we are working over $\Rring^{*}$ it is straightforward that $f$ is a chain homotopy equivalence, and so we are done.


\end{proof}

\begin{lem}\label{lem:infty2} If $Y$ is a rational homology sphere, then 
\[H_{*}\left(\tsf_{\match}\CFT(M_G,T_G,\spinc)\otimes\Rring^{*}\right)\cong \left(\Rring^{*}_{-\frac{1}{2}}\oplus\Rring^{*}_{\frac{1}{2}}\right)^{n-1}\] 
for any perfect matching $\match$ and $\SpinC$ class $\spinc\in\SpinC(M_{G})$.

\end{lem}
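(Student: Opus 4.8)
The plan is to reduce the computation to the structure of the $\infty$-version of multi-pointed Heegaard Floer homology of $Y$, and then to invoke the standard computation of the latter for rational homology spheres.

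First I would unwind the coefficient change. Since $\cl_{\tsf_\match}$ assigns $\la^2$ to the $n$ strands of $T_\match$ and $\la^0=1$ to every other strand of $T_G$, it is the composite of the $\F[U_1,\dots,U_n]$-coloring $\cl_\match$ of Section~\ref{sec:grading} with the ring map $\phi\colon\F[U_1,\dots,U_n]\to\Rring$ given by $U_i\mapsto\la^2$. Hence, as complexes of $\F[U_1,\dots,U_n]$-modules,
\[
\tsf_\match\CFT(M_G,T_G,\spinc)\;=\;\CFT(M_G,T_G,\cl_\match,\spinc)\otimes_{\F[U_1,\dots,U_n]}\Rring\;\simeq\;\CFT^-(Y,\spinc)\otimes_{\F[U_1,\dots,U_n]}\Rring ,
\]
using the identification from Section~\ref{sec:grading} (with $\SpinC(M_G)$ and $\SpinC(Y)$ identified, as $G$ is null-homologous), under which $\grt$ corresponds to the Maslov grading on the $n$-pointed complex $\CFT^-(Y,\spinc)$. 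Tensoring with $\Rring^*$ makes $\phi(U_i)=\la^2$ invertible; since $\phi$ sends every $U_i$ to the same element, the extension of $\phi$ to $\Rring^*$ factors as $\F[U_1,\dots,U_n]\twoheadrightarrow\F[U,U^{-1}]\to\Rring^*$, where the first map sends all $U_i\mapsto U$ and the second sends $U\mapsto\la^2$. Therefore
\[
\tsf_\match\CFT(M_G,T_G,\spinc)\otimes\Rring^*\;\simeq\;\CFT^\infty(Y,\spinc)\otimes_{\F[U,U^{-1}]}\Rring^* ,
\]
where $\CFT^\infty(Y,\spinc):=\CFT^-(Y,\spinc)\otimes_{\F[U_1,\dots,U_n]}\F[U,U^{-1}]$ is the $n$-pointed $\infty$-complex, a bounded complex of finitely generated free $\F[U,U^{-1}]$-modules, and the right-hand tensor factor is taken along $U\mapsto\la^2$.

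Next I would invoke the standard computation of multi-pointed $\HFT^\infty$. Because $Y$ is a rational homology sphere and $\spinc$ is torsion, the one-pointed $\HFT^\infty(Y,\spinc)\cong\F[U,U^{-1}]$ (see \cite{OS-d-invariant}). Adding the remaining $n-1$ basepoints amounts, up to invariance of the multi-pointed theory, to $n-1$ free stabilizations, each of which, after inverting $U$, tensors the complex over $\F$ with a two-dimensional space $V=\F\theta^+\oplus\F\theta^-$ carrying the zero differential and with $\gr\theta^+-\gr\theta^-=1$ (cf.\ \cite{OS-linkinvariant}). Hence $\HFT^\infty(Y,\spinc)\cong\F[U,U^{-1}]\otimes_\F V^{\otimes(n-1)}$ as a graded $\F[U,U^{-1}]$-module; equivalently $\HFT^\infty(Y,\spinc)\cong\F[U,U^{-1}]\otimes_\F H_*(T^{n-1})$, and in particular it is free over $\F[U,U^{-1}]$. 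Since $\F[U,U^{-1}]$ is a PID, $\CFT^\infty(Y,\spinc)$ is a bounded complex of finitely generated free modules, and its homology is free, the complex is chain homotopy equivalent to its homology; thus base change to $\Rring^*$ commutes with taking homology, giving
\[
H_*\!\left(\tsf_\match\CFT(M_G,T_G,\spinc)\otimes\Rring^*\right)\;\cong\;\Rring^*\otimes_\F V^{\otimes(n-1)} .
\]

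Finally I would carry out the grading bookkeeping. Under $U\mapsto\la^2$ the action of $\Rring^*$ lowers $\grt$ by the $\Rring$-grading (this is how $\grt$ was normalized on the $\tsf_\match$-modified complex in Section~\ref{sec:grading}), so the base change above is grading-compatible. Each of the $n-1$ tensor factors $V$ then contributes a pair of generators whose $\grt$-degrees differ by $1$; by the $U$-conjugation symmetry of $\HFT^\infty$ together with the absolute-grading normalization fixed in Section~\ref{sec:grading}, this pair is placed symmetrically, i.e.\ in degrees $-\tfrac12$ and $\tfrac12$. Reassembling the factors identifies $\Rring^*\otimes_\F V^{\otimes(n-1)}$ with $\big(\Rring^*_{-\frac12}\oplus\Rring^*_{\frac12}\big)^{\otimes(n-1)}$ as graded $\Rring^*$-modules, which is the assertion; specializing to $n=1$ recovers the one-generator ($d$-invariant) case and to $n=2$ the rank-two computation of \cite{OSS-upsilon}. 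The main obstacle is precisely this last step: pinning down the grading normalization so that the free-stabilization generators land at $\pm\tfrac12$ rather than in an a priori arbitrary pair of degrees differing by $1$ — this requires carefully combining the identity $\grt=$ Maslov grading from Section~\ref{sec:grading}, the symmetry of $\HFT^\infty$, and the degree shift introduced by $U\mapsto\la^2$.
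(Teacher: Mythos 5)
Your argument is correct and fills in exactly what the paper leaves implicit: the paper's own proof is simply a citation of the multi-basepoint stabilization formula (\cite[Theorem 4.4]{OS-linkinvariant}), the computation $\HF^\infty(Y,\spinc)\cong\F[U,U^{-1}]$ for a torsion $\SpinC$ structure on a rational homology sphere (\cite[Theorem 10.1]{OS-3m2}), and the base-change-to-$\Rring^*$ computation from \cite[Lemma 10.1]{OSS-upsilon}, which is precisely the chain of reductions you spell out. The one caveat — which you already flag — is that the $\pm\tfrac12$ normalization of the two generators in each $V$ factor is a bookkeeping convention rather than something proved by the symmetry argument alone, but this does not affect the substance of the lemma (only the rank and relative gradings are used downstream).
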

\begin{proof}
See \cite[Theorem 4.4]{OS-linkinvariant}, \cite[Theorem 10.1]{OS-3m2} and \cite[Lemma 10.1]{OSS-upsilon}.
\end{proof}

We conclude this Section by defining the upsilon invariant. Lemmas \ref{lem:infty1} and \ref{lem:infty2} imply that $\tsf\HFT(M_{G},T_{G},\spinc)/\mathrm{Tors}$ is a free $\Rring$-module of rank $2^{n-1}$. So, we define the upsilon invariant for graphs embedded in rational homology spheres as follows. 

\begin{defn}
For any $\tsf\in\mathsf{L}_{G}$, choose a basis $\{e_i(\tsf)\}_{i=1}^{2^{n-1}}$ of homogeneous elements for $\tsf\HFT(M_{G},T_{G},\spinc)/\mathrm{Tors}$, so that $\grt(e_i(\tsf))\le\grt(e_{i+1}(\tsf))$ for any $1\le i\le 2^{n-1}-1$.
Then, $\Upsilon_{G,\spinc}(\tsf)$ is an ordered sequence of $2^{n-1}$ real numbers defined as 
\[\Upsilon_{G,\spinc}(\tsf):\left\{\Upsilon_{\min}=\Upsilon_1\le\Upsilon_2\le\ldots\le\Upsilon_{2^{n-1}}=\Upsilon_{\max}\right\}\]
where $\Upsilon_i=\grt(e_i(\tsf))$.
\end{defn}

Note that $\Upsilon_{G,\spinc}(\tsf)$ is an invariant of the matched graph $G$ and the $\SpinC$ class $\spinc\in\SpinC(M_{G})=\SpinC(Y)$.

\begin{defn}\label{def:vsum}
Suppose $G_1$ and $G_2$ are $\Theta_n$-type graphs embedded in rational homology spheres $Y_1$ and $Y_2$, respectively. An embedded graph $G$ in $Y_1\#Y_2$ is obtained by \emph{vertex connected sum} of $G_1$ with $G_2$ if it is obtained by removing small ball neighborhoods $N_+\subset Y_1$ and $N_-\subset Y_2$ around $v_+(G_1)$ and $v_-(G_2)$, respectively, and identifying $\del N_+$ with $\del N_-$ with an orientation reversing diffeomorphism that maps $e(G_1)\cap\del N_+$ to $e(G_2)\cap \del N_-$ and respects the indices of edges. 
\end{defn}

%

\begin{proof} (Lemma \ref{lem:sum}) Suppose $(M^1,T^1)$ and $(M^2,T^2)$ are tangles corresponding $G_1$ and $G_2$, respectively. Since $G$ is a vertex connected sum of $G_1$ and $G_2$, the tangle $(M,T)$ corresponding to $G$ is obtained by concatenating $(M^1,T^1)$ and $(M^2,T^2)$ with some diffeomorphism $d$ from $(\del_+M^1,\del_+T^1)$ to $(\del_-M^2,\del_-T^2)$ that respects the indices of $T^1=\coprod_{i=1}^nT^1_i$ and $T^2=\coprod_{i=1}^nT^2_i$. The proof of Theorem \ref{TFH-concatenation} implies that 
\[\CFT(M,T,\spinc)\simeq\CFT(M^1,T^1,\spinc_1)\otimes \CFT(M^2,T^2,\spinc_2),\]
and it is easy to check that it preserves relative $\tsf$-grading for any $\tsf$. Moreover, any $1\le i\le n$ specifies a perfect matching for $T^1$, $T^2$ and $T$, denote the corresponding gradings by $\gr_i$. By \cite[Section 4]{OS-d-invariant} the above chain homotopy preserves $\gr_i$ for all $i$, and thus by definition $\grt$ for all $\tsf$.

For $i=1,2$, let $\cl^i_{\tsf}$ be the $\mathcal{R}$-coloring on $(M^{i},T^{i})$ associated with $\tsf$ i.e. $\cl^{i}_{\tsf}([T^{i}_j])=\la^{t_j}$. Denote $\Tangle^{i}_{\tsf}=[M^{i},T^{i},\cl^{i}_{\tsf},\spinc_i]$ for $i=1,2$. Consequently, we get a grading preserving isomorphism
\[\HFT(\Tangle)\cong H_{*}\left(\CFT(\Tangle^1)\otimes_{\mathcal{R}}\CFT(\Tangle^2)\right)\]
for $\Tangle=\Tangle^1\circ_d\Tangle^2$ and the claim follows from the K\"{u}nneth formula. 
\end{proof}

\begin{defn}\label{def:homcob}
Suppose $Y$ and $Y'$ are rational homology spheres, and $G\subset Y$ and $G'\subset Y'$ are embedded graphs. We say $G$ and $G'$ are \emph{homology concordant} if there exists a homology cobordism $(W,F)$ from $(M_G,T_G)$ to $(M_{G'},T_{G'})$ and $F$ preserves the indicies. Precisely, if $T_G=\coprod_{i=1}^\el T_i$ and $T_{G'}=\coprod_{i=1}^{\el} T'_i$ with indices inherited from $e(G)$ and $e(G')$, respectively, then $T_i$ and $T'_i$ lie on the boundary of the same connected component of $F$ for every $i$. 

Suppose $G$ and $G'$ are null-homologous, and $\mathsf{L}_G,\mathsf{L}_{G'}\neq\emptyset$. Given $\spinc\in\SpinC(Y)$ and $\spinc'\in\SpinC(Y)$ we say $(G,\spinc)$ and $(G',\spinc')$ are $\SpinC$ \emph{homology cobordant} if they are homology concordant and the homology cobordism can be equipped with a $\SpinC$ class $\spinct\in\SpinC(W)$ so that $\spinct|_{M_G}=\spinc$ and $\spinct|_{M_{G'}}=\spinc'$.
\end{defn}

%

\begin{proof} (Theorem \ref{thm:invhomcob}) Assume $(M,T)$ and $(M',T')$ denote the tangles corresponding to $G$ and $G'$, and $(W,F)$ is the homology cobordism from $(M,T)$ to $(M',T')$. Let $\Ring=\Ring_T=\Ring_{T'}$, and $\cl_F$ be the $\Ring$-coloring on $F$ defined by $\cl_F([F_i])=\la_i$. Then, the $\Ring$-cobordism $\Cob=[W,F,\cl_F,\spinct]$ will induce a chain map $\fmap_{\Cob}$ from $\CFT(M,T,\spinc)$ to $\CFT(M',T',\spinc')$. 
Since $(W,F)$ is a homology cobordism, $F$ gives a one-on-one correspondence between the perfect matchings for $G$ and $G'$. For any perfect matching $\match\subset G$ we denote the corresponding perfect matching for $G'$ by $\match$ as well. If $T_{\match}=\coprod_{i=1}^nT_{j_i}$, let $F_{\match}=\coprod_{i=1}^nF_{j_i}$ and $\cl_{\match}$ be the $\F[U_1,\cdots,U_n]$-coloring of $F$ defined by setting $\cl_{\match}([F_{j_i}])=U_i$ while $\cl_{\match}([F_j])=1$ for all $j\neq j_1,\cdots,j_n$. Replacing $\cl_F$ with $\cl_{\match}$ in $\Cob$ we get a cobordism denoted by $\Cob_{\match}$, and let $\fmap_{\Cob}^{\match}$ be the corresponding cobordism map. As discussed in Section \ref{sec:background}, under the identifications
\[\CFT(M,T,\cl_{\match},\spinc)=\CFT^-(Y,\spinc)\quad\text{and}\quad\CFT(M',T',\cl_{\match},\spinc')=\CFT^-(Y',\spinc'),\] 
 $\fmap_{\Cob}^{\match}$ is the same as the multi-pointed version of Ozsv\'{a}th-Szab\'{o}'s cobordism maps corresponding to the cobordism $Z$ from $Y$ to $Y'$, equipped with the $\SpinC$ class corresponding to $\spinct$. Therefore,  \cite[Theorem 7.1]{OS-4m} and \cite[Theorem 1.4]{Zemke-gr} imply that $\fmap_{\Cob}^\match$ preserves the Maslov gradings and so $\fmap_{\Cob}$ preserves $\tsf_{\match}$-gradings, denoted by $\gr_{\match}$. Consequently, for any $\tsf=(t_1,\cdots,t_{\el})\in\mathsf{L}_T=\mathsf{L}_{T'}$, the chain map $\fmap_{\Cob}$ preserves $\tsf$-grading. So, equipping $(W,F)$ with the $\Rring$-coloring $\cl_{\tsf}$ defined by $\cl_{\tsf}([F_i])=\la^{t_i}$ we get a grading preserving cobordism map denoted by $\fmap_{\Cob}^{\tsf}$ from $\tsf\CFT(M,T,\spinc)$ to $\tsf\CFT(M',T',\spinc')$.

Next, pick relative $\SpinC$ structures $\relspinc_0\in\SpinC(M,T)$ and $\relspinc_0'\in\SpinC(M',T')$ so that there exists a relative $\SpinC$ class $\relspinct\in\SpinC(W,F)$ with $\relspinct|_{(M,T)}=\relspinc_0$ and $\relspinct|_{(M',T')}=\relspinc_0'$. We show that the diagram 
\begin{diagram}
\tsf\CFT(M,T,\spinc)\otimes\Rring^{*}&&\rTo{\fmap_{\Cob}^{\tsf}}&&\tsf\CFT(M',T',\spinc')\otimes\Rring^*\\
\dTo{f}&&&&\dTo{f'}\\
\tsf_{\match}\CFT(M,T,\spinc)\otimes\Rring^{*}&&\rTo{\fmap_{\Cob}^{\tsf_{\match}}}&&\tsf_{\match}\CFT(M',T',\spinc')\otimes\Rring^*\\
\end{diagram} 
commutes up to chain homotopy, where $f$ and $f'$ are the chain homotopies defined in Lemma \ref{lem:infty1} using $\relspinc_0$ and $\relspinc_0'$, respectively.  By definition, $\fmap_{\Cob}^{\square}=\fmap_{\Cob_1}^{\square}\circ\fmap_{\Cob_2}^{\square}\circ\fmap_{\Cob_3}^{\square}$ where $\Cob=\Cob_1\circ\Cob_2\circ \Cob_3$ and $\Cob_i$ is defined by attaching $i$-handles. Here $\square=\tsf,\tsf_{\match}$. It is easy to check that the $1$- and $3$-handle attachment cobordism maps commute with the corresponding vertical chain homotopies. Thus, it is enough to prove this commutativity for the case that $\Cob$ corresponds to attaching $2$-handles and $(W,F)$ is parametrized by a framed link $\mathbb{L}$ in $M$.  Consider a  Heegaard triple $\HD=(\Sigma,\alphas,\betas,\gammas,\z)$ adapted to the link, and assume $(\HD,\cl_{\tsf},\spinct)$ and $(\HD,\cl_{\tsf_{\match}},\spinct)$ are $\Rring$-diagrams i.e. $\spinct$-admissible.  

Fix $\x_0\in \Ta\cap\Tb$ and $\y_0\in\Ta\cap\Tc$ so that $\relspinc(\x_0)=\relspinc_0$ and $\relspinc(\y_0)=\relspinc_0'$. Let $\psi_0\in\pi_2(\x_0,\Theta_{\beta\gamma},\y_0)$ be a triangle representing $\relspinct$ with $\mu(\psi_0)=0$ and $n_{z_i}(\psi_0)=0$ for all $1\le i\le\el$. For simplicity, assume the subset of base points specified by the matching is given by $\z_{\match}=\coprod_{i=1}^nz_i$. Let $\psi\in\pi_2(\x,\Theta_{\beta\gamma},\y)$ be a triangle contributing to $\fmap_{\Cob}^{\tsf}$ and so $\fmap_{\Cob}^{\tsf_{\match}}$. By definition, 
\[f(\x)=\la^{-\sum_{i=1}^{\el}t_in_{z_i}(\phi)}\x\quad\quad\text{and}\quad\quad f'(\y)=\la^{-\sum_{i=1}^{\el}t_in_{z_i}(\phi')}\y\]
where $\phi\in\pi_2(\x_0,\x)$ and $\phi'\in\pi_2(\y_0,\y)$ are Whitney disks with $n_{z_i}(\phi)=n_{z_{i}}(\phi')=0$ for all $1\le i\le n$. Then, $\psi$ contributes to $f'\circ \fmap_{\Cob}^{\tsf}(\x)$ and $\fmap_{\Cob}^{\tsf_{\match}}\circ f(\x)$ as 
\[\#\mathcal{M}(\psi)\la^{\sum_{i=1}^{\el}t_in_{z_i}(\psi)-\sum_{i=1}^{\el}t_in_{z_i}(\phi')}\y\quad\text{and}\quad\#\mathcal{M}(\psi)\la^{\sum_{i=1}^{n}2n_{z_i}(\psi)-\sum_{i=1}^{\el}t_in_{z_i}(\phi)}\y\]
respectively.  On the other hand, triangles $\phi*\psi*(\phi')^{-1}$ and $\phi_0$ differ in doubly periodic domains and so
\[\sum_{i=1}^{\el}t_i(n_{z_i}(\phi)+n_{z_i}(\psi)-n_{z_i}(\phi'))=\sum_{i=1}^{n}2(n_{z_i}(\phi)+n_{z_i}(\psi)-n_{z_i}(\phi'))=\sum_{i=1}^n2n_{z_i}(\psi).\]
Thus, $\psi$ contributes equally to $f'\circ \fmap_{\Cob}^{\tsf}(\x)$ and $\fmap_{\Cob}^{\tsf_{\match}}\circ f(\x)$, and so the above diagram commutes. 
 
 Since, $\left(\fmap_{\Cob}^{\tsf_{\match}}\right)_*$ in the above diagram is an isomorphism, 
 \[\left(\fmap_{\Cob}^{\tsf}\right)_*: H_*\left(\tsf\CFT(M,T,\spinc)\otimes\Rring^{*}\right)\to H_*\left(\tsf\CFT(M',T',\spinc')\otimes\Rring^*\right)\] is also an isomorphism. Thus, the induced map 
 \[F^{\tsf}:H_*(\tsf\CFT(M,T,\spinc))/\text{Tor}\to H_*(\tsf\CFT(M',T',\spinc'))/\text{Tor}\]
by $\fmap_{\Cob}^{\tsf}$ is injective. Further, $F^{\tsf}$ is grading preserving. For any $r\in\R$, let $m_r$ and $m'_r$ be the smallest $i$ such that $\Upsilon_i(\tsf)>r$ to $G$ and $G'$, respectively. Then, $m'_r\ge m_r$. Similarly, reversing the orientation on $W$ and $F$, we get a homology cobordism from $(M',T')$ to $(M,T)$, and repeating the above argument for this cobordism implies that $m_r\ge m'_r$. Therefore, $m_r=m'_r$ and so $\Upsilon_{G,\spinc}(\tsf)=\Upsilon_{G',\spinc'}(\tsf)$.
\end{proof}

\begin{example} \label{ex:closed}
Suppose $G$ is an embedded $\Theta_1$-graph in a rational homology sphere $Y$ i.e. $G$ has two vertices and one edge connecting them. Then, $(M_G,T_G)$ is the tangle corresponding to $Y$ with one marked point, and $\mathsf{L_{G}}=\{2\}\in\R$. Thus, for the only solution $\tsf=2$
\[\tsf\CFT(M_G,T_G,\spinc)=\CFT^-(Y,\spinc)\otimes_{\F[U]} \Rring\]
where $\Rring$ is an $\F[U]$-module using the homomorphism $\phi$ defined by setting $\phi(U)=\la^2$. Moreover, $\tsf$-grading on the left chain complex is the grading induced by the Maslov grading on the right. Since $\Rring$ is flat, we have
\[\Upsilon_{G,\spinc}(2)=d(Y,\spinc)\]
the \emph{d-invariant} or \emph{correction term} of $Y$ \cite{OS-d-invariant}.

\end{example}

\begin{example}
Suppose the graph $L=\coprod_{i=1}^nL_i$ is an $n$-component link (knot if $n=1$) embedded in $S^3$ such that each connected component $L_i$ of $L$ contains exactly two vertices and two edges, and edges are labelled by $i$ and  $n+i$. Then, it is straightforward from the definition and Example \ref{ex:link} that for any $t\in [0,2]$
\[\Upsilon_{L}(t,t,\cdots,t,2-t,2-t,\cdots,2-t)=\Upsilon_{\L}(t)\]
where on the left hand side the number of entries equal to $t$ and $2-t$ is $n$. Moreover, $\L$ denotes the corresponding oriented link and $\Upsilon$ on the right hand side is the original link invariant defined in \cite{OSS-upsilon}.
 \end{example}

 \begin{lem}\label{lem:PL} For any embedded graph $G\subset M$ in a rational homology sphere, $\Upsilon_{\spinc,G}(\tsf)$ is a sequence of $2^{n-1}$ continuous piecewise linear functions over $\tsf\in\mathsf{L}_G$.
 \end{lem}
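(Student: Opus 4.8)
The plan is to show that $\Upsilon_{G,\spinc}(\tsf)$ depends piecewise-linearly and continuously on $\tsf$ by combining the concrete description of the grading $\grt$ as an explicit linear function of $\tsf$ on each simplex with a stability/semicontinuity argument coming from the fact that $\Upsilon_i(\tsf)$ is read off from a fixed finite chain complex whose differential varies algebraically with $\tsf$.

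First I would fix an $\spinc$-admissible Heegaard diagram $\HD=(\Sigma,\alphas,\betas,\z)$ for $(M_G,T_G)$, so that the underlying set of generators $\Ta\cap\Tb$ of $\CFT(M_G,T_G,\spinc)$ and all the relevant Whitney disks $\phi$ (those of Maslov index $0$ and $1$ with prescribed $n_{z_i}$) are independent of $\tsf$. On the simplex $\Delta^i$ of the $\Delta$-complex structure on $\mathsf{L}_G$ containing $\tsf$ in its interior, the absolute grading is defined by $\grt(\x)=n_1\gr_{\match_1}(\x)+\cdots+n_{i+1}\gr_{\match_{i+1}}(\x)$ where $\tsf=\sum n_j\tsf_{\match_j}$ with $\sum n_j = 1$; since the barycentric coordinates $n_j$ are affine-linear functions of $\tsf$ on $\Delta^i$ and each $\gr_{\match_j}(\x)$ is a constant, $\grt(\x)$ is an affine-linear function of $\tsf$ on $\overline{\Delta^i}$, and these expressions agree on common faces, so $\x\mapsto\grt(\x)$ is continuous and piecewise-linear in $\tsf$ globally on $\mathsf{L}_G$. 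Moreover the grading on $\Rring$ is defined by $\gr(\la^{t})=t$, so the full chain complex $\tsf\CFT(M_G,T_G,\spinc)$ is $\R$-graded with a generating set that is fixed and gradings varying piecewise-linearly in $\tsf$; the coefficient $\cl_{\tsf}(\phi)=\la^{\sum t_i n_{z_i}(\phi)}$ attached to each disk $\phi$ appearing in the differential likewise varies continuously (indeed the exponent is piecewise-linear, in fact linear, in $\tsf$).

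Next I would work simplex by simplex. Fix a top simplex $\Delta$ of $\mathsf{L}_G$; it suffices to prove $\Upsilon_i$ is continuous and piecewise-linear on $\overline{\Delta}$. On $\overline{\Delta}$, write $\tsf=\sum n_j\tsf_{\match_j}$. I want to argue that $\Upsilon_i(\tsf)$, which by definition is $\grt(e_i(\tsf))$ for a homogeneous basis $\{e_i(\tsf)\}$ of $\tsf\HFT(M_G,T_G,\spinc)/\Tors$ ordered by grading, can be computed by a uniform rule. The key point is that for $\tsf$ and $\tsf'$ in the same open face, the differentials of $\tsf\CFT$ and $\tsf'\CFT$ have the "same shape": a generator $\x$ appears in $\del\y$ with some power of $\la$, and which pairs $(\x,\y)$ appear and with which disk classes is fixed. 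So I would pass to the associated graded / use a single combinatorial model: order the generators by $\grt$; for a chain $\xi=\sum a_\x \x$ with $a_\x\in\Rring^{*}$, its grading is $\max_{\x}\big(\grt(\x)+\deg a_\x\big)$, and a cycle $\xi$ represents a nonzero non-torsion class exactly when it survives $\otimes\Rring^{*}$. Because $\Rring^*$ is a field (it is the field of fractions of the long power series ring, as used in \cite{OSS-upsilon}), $H_*(\tsf\CFT\otimes\Rring^*)$ has dimension $2^{n}$ over $\Rring^*$ by Lemma \ref{lem:infty2}, independently of $\tsf$, and a minimal-grading cycle representative for each of the $2^{n-1}$ relevant basis elements can be produced by a Gaussian-elimination-type algorithm whose branching depends only on the (finitely many) orderings of the quantities $\grt(\x)+(\text{integer or linear-in-}\tsf\text{ shift})$. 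The grading of the $i$-th basis element is then a minimum over the finitely many "feasible" representatives of a maximum of finitely many affine-linear functions of $\tsf$ — hence a continuous piecewise-linear function of $\tsf$ on $\overline\Delta$. Finally, continuity across simplex boundaries follows because on a common face both descriptions specialize to the same complex with the same gradings, and piecewise-linearity is preserved. Invariance of $\Upsilon_{G,\spinc}(\tsf)$ under the choices (Heegaard diagram, basis) is already established, so the resulting functions are well-defined.

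The main obstacle I anticipate is making the "same shape/uniform algorithm" argument rigorous: a priori the minimal grading of a homology class could jump as $\tsf$ crosses a wall inside a simplex where two generators swap relative grading, and one must check these jumps are exactly the break points of a continuous piecewise-linear function rather than genuine discontinuities. I would handle this by a semicontinuity argument: show $\Upsilon_i(\tsf)$ is both lower and upper semicontinuous in $\tsf$. Upper semicontinuity is easy because a grading-$\le c$ generating set for the homology at $\tsf_0$ continues to work (with gradings varying continuously) for nearby $\tsf$; lower semicontinuity follows from the dual picture (or from applying the same bound to the reverse, using that the total rank $2^{n-1}$ is constant, so the multiset $\{\Upsilon_1,\dots,\Upsilon_{2^{n-1}}\}$ cannot lose mass). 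Local piecewise-linearity then comes from restricting to a single simplex and observing that away from the finitely many walls the computation is governed by a fixed ordering, giving an honest affine-linear formula there; continuity plus local linearity off a locally finite hyperplane arrangement yields global continuous piecewise-linearity. This is the analogue of \cite[Theorems 1.17 and 1.20]{OSS-upsilon} for the knot case, cited elsewhere in this paper, and I expect the argument there to transport with only bookkeeping changes.
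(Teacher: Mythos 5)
Your proposal takes essentially the same route as the paper, which reduces the argument to the ``continuously varying family of chain complexes'' framework of Ozsv\'ath--Stipsicz--Szab\'o (citing the analogue of \cite[Proposition 5.2]{OSS-upsilon}) together with the observation that $\grt(\x)$ is affine in $\tsf$ on each simplex of the $\Delta$-complex structure on $\mathsf{L}_G$, hence globally piecewise linear, for each of the finitely many generators $\x\in\Ta\cap\Tb$. Your version is substantially more detailed --- the semicontinuity and wall-crossing discussion usefully unpacks what the cited OSS argument actually delivers --- and the only slip is a small one: the rank of $H_*(\tsf\CFT\otimes\Rring^*)$ over the field $\Rring^*$ is $2^{n-1}$, not $2^n$, which is precisely the number of entries in the sequence $\Upsilon_{G,\spinc}(\tsf)$.
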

 \begin{proof} It is similar to the proof of \cite[Proposition 1.4]{OSS-upsilon}. Note that $\tsf\CFT(M_G,T_G,\spinc)$ is a continuously varying family of finitely generated chain complexes over $\mathcal{R}$ indexed by $\tsf\in\mathsf{L}_G$ in the sense of \cite[Definition 5.1]{OSS-upsilon}. One difference is that $H_{*}\left(\tsf\CFT(M_G,T_G,\spinc)\right)$ has rank $2^{n-1}$. But still an analogous argument as in  \cite[Proposition 5.2]{OSS-upsilon} implies $\Upsilon_i(\tsf)$ in the sequence $\Upsilon_{\spinc,G}(\tsf)$ is a continuous function of $\tsf$. Moreover, $\Upsilon_i(\tsf)$ is equal to the $\grt$ for a generator of $\tsf\CFT(M_G,T_G,\spinc)$ i.e. the intersection points in $\Ta\cap\Tb$. The number of generators are finite and $\grt(\x)$ for any $\x\in\Ta\cap\Tb$ is a linear function. So, $\Upsilon_i$ is piecewise linear.   \end{proof}
 
Next, we consider the special class of $\Theta_n$-type graphs embedded in $S^3$, and discuss the behavior of $\Upsilon$ and its directional derivatives at the vertices of $\mathsf{L}_G$.

 \begin{lem} Suppose $G$ is a $\Theta_n$-type graph embedded in $S^3$. Then, the followings hold:
 \begin{enumerate}
 \item Let $\tsf^i\in\mathsf{L}_G$ be the solution corresponding to the perfect matching $e_i$ i.e. $t_j=0$ for all $j\neq i$ while $t_i=2$. Then, $\Upsilon_{G}(\tsf^i)=0$.
 \item For any $i\neq j$, let $V^i_j=(v_1,\cdots, v_n)$ where $v_i=-1$, $v_j=1$ and $v_k=0$ for $k\neq i,j$. Then, $D_{V^i_j}\Upsilon_G(\tsf^i)$ is equal to $-\tau(K_{ij})$ where $K_{ij}=e_j\cup(-e_i)$. 
 \end{enumerate}
 \end{lem}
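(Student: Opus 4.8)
The plan is to treat the two statements separately, in each case reducing to a fact about knots already available from the earlier sections.

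\medskip
\noindent\textbf{Part (1).} Note that $\tsf^i=\tsf_\match$ for the perfect matching $\match=\{e_i\}$, so $\cl_{\tsf^i}$ factors through the $\F[U]$-coloring $\cl_\match$ (with $\cl_\match(T_i)=U$, $\cl_\match(T_k)=1$ for $k\neq i$) via the homomorphism $U\mapsto\la^2$. By the identification recorded in Section~\ref{sec:grading} (following \cite[Section~8.1]{AE-2}) one has $\CFT(M_G,T_G,\cl_\match,\spinc)\simeq\CFT^-(S^3,\spinc)$, with $\grt$ lifted to the absolute Maslov grading. Hence $\tsf^i\CFT(M_G,T_G,\spinc)\simeq\CFT^-(S^3)\otimes_{\F[U]}\Rring$, and since $\Rring$ is flat over $\F[U]$ (cf.\ Example~\ref{ex:closed}) its homology modulo torsion is a free rank-one $\Rring$-module generated in grading $d(S^3)=0$. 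Therefore $\Upsilon_G(\tsf^i)=0$; this is exactly the computation of Example~\ref{ex:closed} specialized to $S^3$.

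\medskip
\noindent\textbf{Part (2).} Consider the segment $\tsf(s)=\tsf^i+sV^i_j$ for $s\in[0,2]$; it joins the vertices $\tsf^i$ and $\tsf^j$ of the simplex $\mathsf{L}_G=\mathsf{L}_n$, and along it the only nonzero weights are $t_i(s)=2-s$ on $e_i$ and $t_j(s)=s$ on $e_j$. I claim $\Upsilon_G(\tsf(s))=\Upsilon_{K_{ij}}(s)$ for all $s\in[0,2]$. First, because the strands $e_k$ with $k\neq i,j$ are colored by $\la^{0}=1$, they contribute no $\la$-weight to any differential, and may be absorbed into a suitable Heegaard diagram exactly by the mechanism underlying the matching identification of Section~\ref{sec:grading}; this reduces the $\tsf(s)$-modified complex of $G$ to the $(s,2-s)$-modified tangle Floer complex of the $\Theta_2$-subgraph $G_{ij}$ on the ordered pair of edges $(e_j,e_i)$. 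Second, for $G_{ij}\subset S^3$ one has $\Upsilon_{G_{ij}}(t,2-t)=\Upsilon_{K_{ij}}(t)$ --- the knot case recorded in the introduction and in Example~\ref{ex:link} and the discussion following it --- under which $\gr_{\{e_i\}}$ and $\gr_{\{e_j\}}$ become the Maslov grading $M$ and Zemke's $\gr_z$ grading of $K_{ij}$ \cite{Zemke-gr}, so that on the segment $\grt(\x)=\tfrac{2-s}{2}M(\x)+\tfrac{s}{2}\gr_z(\x)=M(\x)-sA(\x)$ is precisely the Ozsv\'ath--Stipsicz--Szab\'o $s$-modified grading. Matching the edge orders ($e_j$ carries weight $s$ and $e_i$ carries weight $2-s$) gives $\Upsilon_G(\tsf(s))=\Upsilon_{G_{ij}}(s,2-s)=\Upsilon_{K_{ij}}(s)$, proving the claim.

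\medskip
Finally, by Lemma~\ref{lem:PL} the function $\tsf\mapsto\Upsilon_G(\tsf)$ is continuous and piecewise linear on $\mathsf{L}_n$, so the directional derivative at $\tsf^i$ in the direction $V^i_j$ exists, and using part (1) and the claim,
\[
D_{V^i_j}\Upsilon_G(\tsf^i)=\lim_{s\to0^+}\frac{\Upsilon_G(\tsf(s))-\Upsilon_G(\tsf^i)}{s}=\lim_{s\to0^+}\frac{\Upsilon_{K_{ij}}(s)}{s}=-\tau(K_{ij}),
\]
the last equality being the fact that $\Upsilon_K(t)=-t\,\tau(K)$ for $t$ near $0$ \cite[Theorem~1.11]{OSS-upsilon}. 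The main obstacle is the first step of Part (2): showing that, with all strands except $e_i$ and $e_j$ colored trivially, the colored tangle Floer complex of the full $\Theta_n$-graph $G$ agrees --- as a $\grt$-graded $\Rring$-complex, up to quasi-isomorphism --- with that of the $\Theta_2$-subgraph $G_{ij}$ (equivalently, with the $s$-modified knot Floer complex of $K_{ij}$). The argument should parallel the identification $\CFT(M,T,\cl_\match,\spinc)\simeq\CFT^-(Y,\spinc)$ from Section~\ref{sec:grading} --- a strand colored by $1$ can be removed from a Heegaard diagram by handleslides and a destabilization --- but now run while retaining two distinguished strands tracked by the exponents of $\la$, and one must also check that the two surviving relative gradings $\gr_{\{e_i\}}$ and $\gr_{\{e_j\}}$ restrict to $M$ and $\gr_z$ as asserted.
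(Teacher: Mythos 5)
Your proposal is correct and takes essentially the same route as the paper's proof: part (1) reduces $\Upsilon_G(\tsf^i)$ to the $d$-invariant of $S^3$ via the matching identification $\CFT(M_G,T_G,\cl_\match,\spinc)\simeq\CFT^-(S^3)$, and part (2) invokes the restriction formula $\Upsilon_G(\tsf)=\Upsilon_{K_{ij}}(t_j)$ on the edge where $t_k=0$ for $k\neq i,j$ (the paper states it as $\Upsilon_{K_{ij}}(t_i)$, which is the same by the symmetry $\Upsilon_K(t)=\Upsilon_K(2-t)$) and then cites the slope of $\Upsilon_K$ near an endpoint. The gap you flag at the end of part (2) is genuine but is equally present in the paper, which simply asserts the restriction formula without argument; your outline of how to close it — treating the $\la^0$-colored strands as free destabilizations of the Heegaard diagram and verifying that the two surviving matching gradings become the Maslov grading and the grading of \cite{Zemke-gr} on $K_{ij}$ — is the right sketch and is no more incomplete than what the paper itself provides.
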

 
\begin{proof}
The proof of the first part is similar to \cite[Proposition 1.5]{OSS-upsilon}. The second part follows from \cite[Proposition 1.6]{OSS-upsilon} and the fact that for every $\tsf$ with $t_k=0$ when $k\neq i,j$  \[\Upsilon_{G}(\tsf)=\Upsilon_{K_{ij}}(t_i).\]
\end{proof} 



\section{Application: Cobordism group of homology cylinders} \label{sec:app}
Over a surface $S_{g,n}$ of genus $g$ with $n$ boundary components, Garoufalidis and Levine introduce homology cobordism group of homology cylinders, in \cite{GL,Le}. In this Section, we use $\Upsilon$ to define a family of homomorphisms on this group, when $g=0$.

A \emph{homology cylinder} over $S=S_{g,n}$ is a triple $(X,i_+,i_-)$ where $X$ is a three-manifold with boundary and orientation preserving (resp. orientation reversing) embeddings $i_{+}$ (resp. $i_-$) from $S_{g,n}$ into $\del X$ such that
\begin{enumerate}
\item $\del X=i_+(S)\cup i_-(S)$ and $i_+(S)\cap i_-(S)=i_+(\del S)=i_-(\del S)$.
\item $i_+|_{\del S}=i_-|_{\del S}$.
\item $i_{\pm}$ induce isomorphisms from $H_{*}(S)$ to $H_{*}
(X)$.
 \end{enumerate}
 In other words, $(X,i_+,i_-)$ is a homology cobordism from $S$ to itself. 
 
 Two homology cylinders $(X,i_+,i_-)$ and $(X',i_+',i_-')$ over $S$ are called \emph{equivalent} if there exists a diffeomorphism $f:X\to X'$ such that $i_{\pm}'=f\circ i_{\pm}$.  For any $g,n\ge 0$, the equivalence class of homology cylinders over $S_{g,n}$ along with the \emph{stacking} operation defined as
 \[(X,i_+,i_-)\cdot(X',i_+',i_-')=\left(X\cup_{i_-'\circ (i_+)^{-1}}X',i_-,i_+'\right)\]
 form a monoid, denoted by $\mathcal{C}_{g,n}$.
 
 Two homology cylinders $(X,i_+,i_-)$ and $(X',i_+',i_-')$ over $S$ are called smoothly (resp. topologically) \emph{homology cobordant} if there exists a smooth (resp. topological) four-manifold $Z$ with boundary
 \[\del Z=\frac{-X\cup X'}{\left\langle i_{\pm}(x)=i_{\pm}'(x)\ |\ x\in S \right\rangle}\] 
 such that inclusions of $X,X'$ in $Z$ induce isomorphisms from $H_\ast(X)$ and $H_{\ast}(X')$ to $H_{\ast}(Z)$. Under these conditions, $Z$ is called a \emph{smooth (resp. topological) homology cobordism} from $(X,i_+,i_-)$ to $(X',i_+',i_-')$. Being smooth (resp. topological) homology cobordant is an equivalence relation, and the quotient of $\mathcal{C}_{g,n}$ with this relation is a group denoted by $\mathcal{H}_{g,n}^{\smooth}$ (resp. $\mathcal{H}_{g,n}^{\topo}$), called \emph{smooth (resp. topological) homology cobordism group of homology cylinders over $S_{g,n}$}. See \cite{Le} for more details.
 
 \begin{convention}
  From now on, whenever we do not specify smooth or topological we mean smooth. 
 \end{convention}
  
 \begin{example} For any diffeomorphism $\phi\in \mathcal{M}_{g,n}$, we have a homology cylinder 
 \[X_\phi=\left(\frac{S_{g,n}\times [0,1]}{\left\langle (x,t)\sim (x,s)\ |\ x\in \del S_{g,n}\ \text{and}\ t,s\in [0,1]\right\rangle}, \mathrm{id}\times\{0\},\phi\times\{1\}\right).\]
 Mapping $\phi$ to $X_{\phi}$ defines a homomorphism from $\mathcal{M}_{g,n}$ to $\mathcal{H}_{g,n}$. Garoufalidis-Levine prove that this homomorphism is injective for $n=1$ (See \cite[Section 2.4]{GL}) and Cha-Friedl-Kim extende their work for any $n\ge 0$ (See \cite[Proposition 2.4]{cha_friedl_kim_2011}).
 \end{example}

For the rest of this section, assume $g=0$ and $S=S_{0,n}$. Let $(X,i_+,i_-)$ be a homology cylinder over $S$ and $\tau=i_+(\del S)$. Then, $(X,\tau)$ is a sutured manifold. The tangle corresponding to $(X,\tau)$ is associated to an embedding $G$ of $\Theta_n$ in an integral homology sphere obtained from filling in the sutures and then attaching three-handles over the two resulted sphere boundary components. 
%
%
%
%
On the other hand, for any sutured manifold $(X,\tau)$ corresponding to an embedding of $\Theta_n$ in an integral homology sphere we have $R_+(\tau)\cong R_-(\tau)\cong S$. Equipping $(X,\tau)$ with diffeomorphisms $i_{\pm}:S\to R_{\pm}(\tau)$ so that $i_+$ is orientation preserving, $i_-$ is orientation reversing and $i_+|_{\del S}=i_-|_{\del S}$ we get a homology cylinder  $(X,i_+,i_-)$ over $S$.

Fix a labeling of $\del S=\coprod_{i=1}^n\del_iS$. For any homology cylinder $(X,i_+,i_-)$ over $S$ and any vector $\tsf=(t_1,\ldots,t_n)\in [0,2]^n$ with $t_1+\ldots+t_n=2$, let
\[\Upsilon_{X}(\tsf):=\Upsilon_{G}(\tsf),\] 
where $G$ denotes the embedded graph. Since $\Upsilon_{G}(\tsf)$ does not depend on $i_+$ and $i_-$ we drop them from the notation. 
\begin{lem}\label{lem:coninv}
If homology cylinders $(X,i_+,i_-)$ and $(X',i_+',i_-')$ over $S=S_{0,n}$ are homology cobordant, then
\[\Upsilon_{X}(\tsf)=\Upsilon_{X'}(\tsf)\]
for every $\tsf\in [0,2]^n$ with $t_1+\ldots+t_n=2$.
\end{lem}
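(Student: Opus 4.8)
The statement to prove is Lemma \ref{lem:coninv}: if two homology cylinders $(X,i_+,i_-)$ and $(X',i_+',i_-')$ over $S_{0,n}$ are homology cobordant, then $\Upsilon_X(\tsf) = \Upsilon_{X'}(\tsf)$ for all $\tsf \in \mathsf{L}_n$. My plan is to reduce this entirely to Theorem \ref{thm:invhomcob}, which already establishes that $\Upsilon_{G,\spinc}$ is an invariant of $\SpinC$ homology concordance classes of embedded null-homologous graphs in rational homology spheres. So the work is all in translating the homology-cobordism-of-homology-cylinders hypothesis into the statement that the associated $\Theta_n$-type graphs $G \subset Y$ and $G' \subset Y'$ (obtained, as described just before the lemma, by filling the sutures and attaching two three-handles) are $\SpinC$ homology concordant in the sense of Definition \ref{def:homcob}.

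\textbf{Key steps, in order.} First I would recall the construction: given the homology cylinder $(X,i_+,i_-)$ over $S = S_{0,n}$ with suture $\tau = i_+(\partial S)$, the sutured manifold $(X,\tau)$ has $R_\pm(\tau) \cong S_{0,n}$, i.e., planar surfaces with $n$ boundary circles; capping each boundary circle of $\partial X$ with a disk and then filling the two resulting $S^2$ boundary components with three-handles produces an integral homology sphere $Y$ containing an embedded $\Theta_n$-type graph $G$ (two vertices, the cores of the three-handles; $n$ edges, one running through each capped-off annular region of $\tau$). Since $Y$ is an integral homology sphere, $H_1(G;\Z) \to H_1(Y;\Z) = 0$, so $G$ is automatically null-homologous, $\SpinC(Y)$ is a singleton, and $\mathsf{L}_G = \mathsf{L}_n \neq \emptyset$ (a $\Theta_n$-type graph always has a perfect matching — any single edge). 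Second, starting from a smooth homology cobordism $Z$ from $X$ to $X'$ (a four-manifold with $\partial Z = (-X \cup X')/(i_\pm \sim i_\pm')$ inducing homology isomorphisms), I would build the corresponding homology cobordism $(W,F)$ from $(M_G, T_G)$ to $(M_{G'}, T_{G'})$: remove a tubular neighborhood of the graph cores crossed with $[0,1]$, cap off the boundary faces $\del_h W$ appropriately using that the cobordism respects the identifications along $R_\pm$, and extend the tangle $T_G \times [0,1]$-type structure through $Z$ to get $F$, a disjoint union of $n$ disks, one per edge, each meeting $T_G$ and $T_{G'}$ in exactly one component, with indices matching via the fixed labeling of $\partial S$. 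Checking that $W$ is a homology cobordism from $M_G$ to $M_{G'}$ uses the Mayer–Vietoris / excision comparison between $Z$ and $W$ together with the homology-isomorphism hypothesis on $Z$. Third, since $Y, Y'$ are $\Z$HS$^3$'s, $\SpinC(W)$ is a singleton as well, so the resulting $(W,F)$ automatically equips the pair with the required $\SpinC$ class, and $(G,\spinc)$, $(G',\spinc')$ are $\SpinC$ homology concordant. Then Theorem \ref{thm:invhomcob} gives $\Upsilon_{G,\spinc}(\tsf) = \Upsilon_{G',\spinc'}(\tsf)$ for all $\tsf \in \mathsf{L}_G = \mathsf{L}_{G'} = \mathsf{L}_n$; unwinding $\Upsilon_X(\tsf) := \Upsilon_G(\tsf)$ and $\Upsilon_{X'}(\tsf) := \Upsilon_{G'}(\tsf)$ finishes the proof.

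\textbf{Main obstacle.} The delicate point is the second step: verifying carefully that a homology cobordism $Z$ between the homology cylinders induces a \emph{homology} cobordism $(W,F)$ between the associated tangles, i.e., that after removing neighborhoods of the (thickened) graph and capping $\del_h W$ with copies of $[0,1]\times D^3$, the inclusions $M_G \hookrightarrow W \hookleftarrow M_{G'}$ still induce isomorphisms on all $H_j$, $0 \le j \le 3$. This requires keeping track of how the three-handle fillings (used to close off $X$ into $Y$) interact with the four-dimensional cobordism — equivalently, how the capping regions of $\del Z$ glued along $R_\pm(\tau) = i_\pm(S)$ correspond to the horizontal boundary $\del_h W \cong [0,1] \times \partial M_G$. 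I expect this to be a Mayer–Vietoris bookkeeping argument: write $W$ (resp. $Z$) as $M_G$-complement-of-graph-neighborhood glued to a product piece, compare long exact sequences, and use that the graph neighborhood and its $[0,1]$-thickening contribute the same (trivial, since edges are arcs) relative homology on both sides; the hypothesis that $F$ is a union of disks each meeting $T_G, T_{G'}$ once is exactly what makes the vertical piece a homology product. Once this is in hand everything else is formal.
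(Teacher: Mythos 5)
Your overall strategy is exactly the paper's: translate the hypothesis into a homology cobordism $(W,F)$ between the associated tangles $(M_G,T_G)$ and $(M_{G'},T_{G'})$ and then invoke Theorem~\ref{thm:invhomcob}. However, the description of how you build $(W,F)$ from $Z$ does not parse as written. You say you would ``remove a tubular neighborhood of the graph cores crossed with $[0,1]$'' and ``cap off the boundary faces $\del_h W$,'' but $Z$ is a cobordism between the cylinders $X$ and $X'$ and does not contain the graph cores $T_G$, $T_{G'}$, so there is nothing of that form to remove; and $\del_h W\cong[0,1]\times\del M_G$ is a piece that needs to be \emph{created}, not filled in. The paper's construction is direct and avoids this: attach $n$ two-handles to $Z$ along the framed link $\tau=i_\pm(\del S)=i'_\pm(\del S)\subset\del Z$, with framing induced from $i_\pm(S)$. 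These two-handles simultaneously fill in the sutures of both $X$ and $X'$, so that the resulting $4$-manifold $W$ (with corners) has $\del_v W=(-M_G)\sqcup M_{G'}$ and $\del_h W\cong[0,1]\times\del M_G$, and the union $F$ of the cocores of the two-handles is a disjoint union of $n$ properly embedded disks, one per edge, each meeting $T_G$ and $T_{G'}$ in a single strand. Your identification of the delicate point — checking the homology-cobordism condition on $W$ — is accurate (the paper also states it without proof), and with the handle picture it reduces to Mayer–Vietoris plus the assumption that $Z$ is a homology cobordism.

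One smaller inaccuracy: $\SpinC(W)$ is not a singleton. Since $Y$ is an integral homology sphere, $M_G$ is a rational homology $S^2\times[0,1]$, so $H^2(M_G)\cong\Z$ and hence $H^2(W)\cong\Z$. What is true, and what you need, is that the restriction maps $\SpinC(W)\to\SpinC(M_G)$ and $\SpinC(W)\to\SpinC(M_{G'})$ are bijections, so there is a unique $\spinct\in\SpinC(W)$ restricting to the canonical $\tsf$-torsion class on $M_G$, and naturality of $c_1$ forces its restriction to $M_{G'}$ to be the canonical class there as well.
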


\begin{proof}
Let $(M,T)$ and $(M',T')$ be the tangles corresponding to $(X,i_+,i_-)$ and $(X',i'_+,i'_-)$, respectively, and $Z$ be a smooth homology cobordism from $(X,i_+,i_-)$ to $(X',i'_+,i'_-)$. Then, $i_{\pm}(\del S)=i'_{\pm}(\del S)$ is an $n$-component link in $\del Z$ with framing induced from $i_{\pm}(S)$, or equivalently $i_{\pm}(S')$. Attaching $n$ two-handles along the components of this framed link we get a cobordism $W$ from $M$ to $M'$. Moreover, denote the union of co-cores of these $2$-handles by $F$. Then, $(W,F)$ is a homology cobordism from $(M,T)$ to $(M',T')$. Thus, the claim follows from Theorem \ref{thm:invhomcob}. 
\end{proof}
%
\begin{proof} (Proposition \ref{prop:hom})
First, by \ref{lem:coninv}, assigning $\Upsilon_{X}(\tsf)$ to every homology cobordism $(X,i_+,i_-)$ gives a map from $\mathcal{H}_{0,n}$ to $\R$.  Next,  Lemma \ref{lem:sum} implies that this map is a homomorphism. 
\end{proof}


\begin{example} For $n=1,2$, upsilon is equal to the known homomorphisms as follows.
 \begin{itemize}
\item $n=1$: $\mathcal{H}_{0,1}$  is the smooth homology cobordism group of integral homology spheres $\Theta^3_{\Z}$. As discussed in Example \ref{ex:closed} $\Upsilon$ is equal to the $d$-invariant.
\item $n=2$: $\mathcal{H}_{0,2}$  is the framed smooth concordance group of knots in integral homology spheres i.e. $\Z\oplus \mathcal{C}_{\Z}^{\smooth}$.  For any $t\in [0,2]$, $\Upsilon(t,2-t)$ is equal to the upsilon homomorphism of the corresponding oriented knot at $t$ after forgetting the framing.
\end{itemize}
\end{example}

\begin{lem}
The subgroup $\mathcal{M}_{0,n}$ of $\mathcal{H}_{0,n}$ is in the kernel of $\Upsilon$ for every $\tsf$. 
\end{lem}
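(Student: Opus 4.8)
The plan is to reduce everything to the triviality of the product cylinder, exploiting that passing from $\mathrm{id}$ to $\phi$ changes only the boundary parametrization, not the underlying sutured manifold. Fix $\phi\in\mathcal{M}_{0,n}$. By the very formula defining $X_\phi$, its underlying sutured three-manifold is the pair
\[
\left(\frac{S_{0,n}\times[0,1]}{\langle(x,t)\sim(x,s)\mid x\in\partial S_{0,n}\rangle},\ \tau\right),\qquad \tau=\text{image of }\partial S_{0,n},
\]
which does not involve $\phi$ at all; only the identification $i_-=\phi\times\{1\}$ of the top surface with $S_{0,n}$ does. Since the passage recalled in Section~\ref{sec:app} from a homology cylinder over $S_{0,n}$ to an embedded $\Theta_n$-type graph in an integral homology sphere — fill in the sutures, then attach three-handles to the two resulting sphere boundary components — depends only on the sutured manifold $(X,\tau)$ and not on $i_\pm$, the embedded graph $G_\phi\subset Y_\phi$ attached to $X_\phi$ is literally the same as the graph $G_{\mathrm{id}}\subset Y_{\mathrm{id}}$ attached to the trivial cylinder $X_{\mathrm{id}}$. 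The first thing I would do is spell this identification out carefully, checking that none of the intermediate bijections (sutured manifold $\leftrightarrow$ tangle $\leftrightarrow$ graph) uses the parametrizations.

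Granting this, the conclusion is immediate. By definition $\Upsilon_X(\tsf)=\Upsilon_G(\tsf)$ depends only on the associated graph (indeed $i_\pm$ were already dropped from the notation), so $\Upsilon_{X_\phi}(\tsf)=\Upsilon_{G_\phi}(\tsf)=\Upsilon_{G_{\mathrm{id}}}(\tsf)=\Upsilon_{X_{\mathrm{id}}}(\tsf)$ for every $\tsf\in\mathsf{L}_n$. But $X_{\mathrm{id}}$ represents the identity of the monoid $\mathcal{C}_{0,n}$, hence of the group $\mathcal{H}_{0,n}$, and by Proposition~\ref{prop:hom} the assignment $X\mapsto\Upsilon_X(\tsf)$ is a group homomorphism to $\R$, which sends the identity to $0$. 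Therefore $\Upsilon_{X_\phi}(\tsf)=0$ for all $\phi$ and all $\tsf\in\mathsf{L}_n$, i.e. the image of $\mathcal{M}_{0,n}$ lies in the kernel of every homomorphism $X\mapsto\Upsilon_X(\tsf)$. Since the function $\tsf\mapsto\Upsilon_{X_\phi}(\tsf)$ is then identically zero, all of its directional derivatives vanish as well, so $\mathcal{M}_{0,n}$ also lies in the kernel of the $\Z^\infty$-valued homomorphism constructed from $\Upsilon$ in Lemmas~\ref{lem:int}--\ref{lem:surj}.

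There is no serious obstacle here; the only point worth spelling out is the first step, namely that the graph $(Y,G)$ attached to a sutured manifold genuinely ignores $i_\pm$, so a change of parametrization produces the \emph{identical} pair $(Y,G)$ rather than merely a homology concordant one. I would emphasize that this is consistent with the Cha--Friedl--Kim injectivity of $\mathcal{M}_{0,n}\hookrightarrow\mathcal{H}_{0,n}$: the invariant $\Upsilon$ is strictly coarser than the homology cobordism class of a homology cylinder, being blind to its boundary parametrization, so $\Upsilon_{X_\phi}=0$ does not force $X_\phi$ to be homology cobordant to $X_{\mathrm{id}}$. As sanity checks I would record the cases $n=1$, where $\mathcal{M}_{0,1}$ is trivial and the statement is vacuous, and $n=2$, where the core Dehn twist of the annulus maps to the framing generator of $\Z\oplus\mathcal{C}_{\Z}^{\smooth}$, on which $\Upsilon$ equals $\Upsilon_{\text{unknot}}=0$.
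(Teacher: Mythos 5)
Your proof is correct and takes the same approach as the paper's one-line justification, namely that the underlying tangle of $X_\phi$ is the product tangle regardless of $\phi$, so $\Upsilon$ cannot see the boundary parametrization. You have simply unwound that observation into its constituent steps; the additional remarks on Cha--Friedl--Kim injectivity and the $n=1,2$ cases are sound but not needed.
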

\begin{proof}
It is obvious, because for any $\phi\in \mathcal{M}_{0,n}$ the underlying tangle of $X_\phi$ is the product tangle.
\end{proof}

\subsection{String link concordance group and upsilon} In this section, we show that $\Upsilon$ gives an obstruction for detecting whether two links are strongly concordant.

A \emph{string link} with $n$-components is a properly embedded $1$-manifold $L$ in $D^2\times [0,1]$ such that it has no closed components, and 
\[L\cap (D^2\times\{i\})=\{p_1,\cdots,p_n\}\times\{i\}\quad\quad\quad \text{for}\ i=0,1\]
and a fixed set $\{p_1, p_2, \cdots, p_n\}$ of $n$ distinct points in $D^2$. Moreover, \emph{closure} of $L$, denoted by $\widehat{L}$ is the $n$-component link in $S^3$ obtained by identifying $D^2\times \{0\}$ with $D^2\times\{1\}$ and filling the resulted torus boundary by attaching a solid torus so that every $p\times [0,1]$ with $p\in\del D^2$ bounds a disk in it. On the other hand, given an $n$ component link in $S^3$, along with an embedded disk that intersects each component of the link in exactly one point, one can construct a string link by removing a thickened neighborhood of the disk from $S^3$.

Given two string links $L$ and $L'$ in $D^2\times [0,1]$ a (smooth) \emph{concordance} from $L$ to $L'$ is a disjoint union of $n$ properly embedded disk $F=\coprod_{i=1}^nD_i$ in $D^2\times [0,1]\times [0,1]$ satisfying the followings:
\[
\begin{split}
&D_i\cap \left(D^2\times [0,1]\times\{0\}\right)=L_i\times\{0\}, \quad D_i\cap \left(D^2\times [0,1]\times\{1\}\right)=L'_i\times\{1\},\\ &\quad\quad\quad\quad\text{and}\quad D_{i}\cap\left(D^2\times\{0,1\}\times [0,1]\right)=\left(\coprod_{i=1}^n p_i\right)\times[0,1].
\end{split}
\]
Two string links are called \emph{concordant} is there exists a concordance between them. Further, two string links $L$ and $L'$ are (smoothly) concordant if and only if their closures are (smoothly) \emph{strongly concordant} \cite{habegger_lin_1998} i.e. there exists $n$ disjoint (smooth) properly embedded cylinders $\coprod_{i=1}^nS_i$ in $S^3\times [0,1]$ such that 
\[S_i\cap \left(S^3\times\{0\}\right)=\widehat{L}_i\quad\quad\text{and}\quad\quad S_i\cap\left(S^3\times\{1\}\right)=\widehat{L'}_i.\]

The set of string links with $n$ strands along with the concatenation operation forms a monoid. Moreover, being concordant is an equivalence relation, and taking the quotient of this monoid with this relation makes it into a group, called \emph{string link concordance group} and denoted by $\mathcal{C}(n)$. This group is first introduced in \cite{LeDimet-88}.

On the other hand, every string link with $n$ strands specifies a homology cylinder over $S_{0,n+1}$, and one can think of homology cylinders over $S_{0,n+1}$ as string links in homology cylinders over a disk. Further, if two string links in $D^2\times [0,1]$ are (smoothly) concordant, their corresponding homology cylinders are (smoothly) homology cobordant. So there is a homomorphism from $\mathcal{C}(n)$ to $\mathcal{H}^{\smooth}_{0,n}$.

Fix a distinguished boundary for $S=S_{0,n+1}$, for instance if $\del S=\coprod_{i=0}^n\del_iS$, suppose it is $\del_0S$. One can define the closure of a homology cylinder $(X,i_+,i_-)$ over $S$ as an $n-$component link in a homology sphere $Y$ as follows. As before, $Y$ is obtained from $X$ by first attaching two-handles along $\tau=i_+(\del S)=i_-(\del S)$ to construct $M$ and then filling the two sphere boundary components of $M$ with three-handles to get $Y$.  Consider pairwise disjoint properly embedded arcs $\gamma=\coprod_{i=1}^n\gamma_i$ on $S$ such that $\gamma_i$ connects $\del_iS$ to $\del_0S$ for $i=1,\cdots,n$. Then, $\coprod_{i=1}^n \left(i_{+}(\gamma_i)\cup i_-(\gamma_i)\right)$ is a link $L$ in $Y$, which is called the closure of $(X,i_+,i_-)$. It is not hard to show that $L$ does not depend on the choice of $\gamma$. 

Analogous to Habegger-Lin's result, one can show that two homology cylinders over $S_{0,n+1}$ are (smoothly) homologically cobordant if and only if their corresponding string links are (smoothly) strongly concordant in a homology cobordism between the corresponding homology spheres.

\begin{cor}
Given homology cylinders $X$ and $X'$ over $S_{0,n+1}$, if $\Upsilon_{X}(\tsf)\neq\Upsilon_{X'}(\tsf)$ then their link closures are not strongly concordant.
\end{cor}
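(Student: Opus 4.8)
The plan is to argue by contraposition, reducing the statement to the invariance results already proved. Suppose the link closures $\widehat X\subset Y$ and $\widehat{X'}\subset Y'$ — the $n$-component links in the integral homology spheres $Y,Y'$ obtained from $X,X'$ by the closure construction of this section — are strongly homology concordant. By the smooth refinement of Habegger--Lin's theorem recalled above, strong homology concordance of $\widehat X$ and $\widehat{X'}$ is equivalent to smooth homology cobordism of the underlying homology cylinders $X$ and $X'$ over $S_{0,n+1}$; so $X$ and $X'$ represent the same class in $\mathcal{H}_{0,n+1}^{\smooth}$.

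It then remains to invoke invariance. The homology cylinders $X$ and $X'$ correspond to tangles associated with embeddings $G\subset Y$ and $G'\subset Y'$ of $\Theta_{n+1}$; since $Y$ and $Y'$ are integral homology spheres, $G$ and $G'$ are automatically null-homologous and $\SpinC(Y),\SpinC(Y')$ are singletons, so $\mathsf{L}_G=\mathsf{L}_{G'}=\mathsf{L}_{n+1}$ and there is no $\SpinC$ choice to make. A homology cobordism between $X$ and $X'$ yields, exactly as in the proof of Lemma \ref{lem:coninv} (attach $n+1$ two-handles along the framed link $i_\pm(\del S_{0,n+1})$ and take cocores), a homology cobordism of tangles $(W,F)$ from $(M_G,T_G)$ to $(M_{G'},T_{G'})$ with $F$ preserving the indices of the strands, over which the unique $\SpinC$ structure extends. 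Theorem \ref{thm:invhomcob} (equivalently, Lemma \ref{lem:coninv}) then gives $\Upsilon_X(\tsf)=\Upsilon_G(\tsf)=\Upsilon_{G'}(\tsf)=\Upsilon_{X'}(\tsf)$ for every $\tsf\in\mathsf{L}_{n+1}$, contradicting the hypothesis; hence the closures are not strongly homology concordant.

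The substantive ingredient is external to this paper: the smooth analog of Habegger--Lin identifying strong (homology) concordance of links with concordance of the associated string links, together with the already-recorded fact that concordant string links give homology-cobordant homology cylinders. Granting this, the corollary is immediate from Proposition \ref{prop:hom}/Lemma \ref{lem:coninv}. The only point requiring care is the bookkeeping of orderings: fixing the distinguished boundary component $\del_0 S_{0,n+1}$ and a labelling of $\del S_{0,n+1}$ determines an ordering of the strands of the string link, hence of the edges of $\Theta_{n+1}$, and one must check that the closure construction and the resulting cobordism surface $F$ transport this ordering consistently, so that $F$ genuinely preserves indices in the sense required by Theorem \ref{thm:invhomcob}. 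This verification is routine.
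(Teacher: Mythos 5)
Your proof is correct and follows exactly the route the paper intends: read the preceding paragraph's Habegger--Lin analogue as saying that strong (homology) concordance of the link closures is equivalent to smooth homology cobordism of the cylinders, then apply Lemma~\ref{lem:coninv} (equivalently Theorem~\ref{thm:invhomcob} via the two-handle-and-cocore construction) and take the contrapositive. The paper leaves the corollary without explicit proof because it is immediate from this discussion; your write-up simply spells out those steps, including the (correct) observations that integral homology spheres force the $\SpinC$ choice and nullhomology hypotheses to be vacuous and that the strand/edge orderings transport compatibly.
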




 \subsection{Gluing operation and upsilon} In this section, we study how upsilon changes under the gluing operation defined in \cite[Section 2.3]{cha_friedl_kim_2011}. For some $n,n'>0$, let $X$ and $X'$ be homology cylinders over $S=S_{0,n}$ and $S'=S_{0,n'}$ respectively. Fix labelings $\del S=\coprod_{i=1}^n\del_iS$ and $\del S'=\coprod_{i=1}^{n'}\del_iS'$. Gluing $S$ and $S'$ by identifying $\del_nS$ and $\del_{n'}S'$ with an orientation reversing homeomorphism we get a surface of genus zero with $n+n'-2$ boundary components denoted by $\widetilde{S}$. Further, gluing $X$ and $X'$ along tubular neighborhoods of sutures corresponding to $\del_nS$ and $\del_{n'}S'$ we get a homology cylinder $\widetilde{X}$ over $\widetilde{S}$.

\begin{lem}\label{lem:gluing}
With the above notation fixed, for any $\widetilde{\tsf}\in[0,2]^{n+n'-2}$ with $t_1+t_2+\cdots+t_{n+n'-2}=2$ we have
\[\Upsilon_{\widetilde{X}}(\widetilde{\tsf})=\Upsilon_{X}(\tsf)+\Upsilon_{X'}(\tsf')\]
where $\tsf=(t_1,t_2,\cdots,t_{n-1},t_n+\cdots+t_{n+n'-2})$ and $\tsf'=(t_{n},t_{n+1},\cdots,t_{n+n'-2},t_1+t_2+\cdots+t_{n-1})$. 
\end{lem}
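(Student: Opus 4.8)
The plan is to translate the Cha--Friedl--Kim gluing operation \cite[Section~2.3]{cha_friedl_kim_2011} into the language of embedded graphs and then reduce the claimed identity to the vertex connected sum formula of Lemma~\ref{lem:sum}. Write $(M^1,T^1)$ and $(M^2,T^2)$ for the tangles associated with $X$ and $X'$; these correspond to $\Theta_n$-type and $\Theta_{n'}$-type graphs $G_1\subset Y_1$ and $G_2\subset Y_2$ in integral homology spheres. For bookkeeping, let $G_1^{+}$ be the $\Theta_{n+n'-2}$-type graph in $Y_1$ obtained from $G_1$ by replacing its last edge $e_n$ with $n'-1$ parallel copies $e_n^{(1)},\dots,e_n^{(n'-1)}$, inserted after $e_1,\dots,e_{n-1}$ in the edge ordering; symmetrically, let $G_2^{+}$ be the $\Theta_{n+n'-2}$-type graph in $Y_2$ obtained from $G_2$ by replacing its last edge $e'_{n'}$ with $n-1$ parallel copies placed \emph{before} $e'_1,\dots,e'_{n'-1}$. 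The first step --- which I expect to be the crux --- is the purely topological claim that, tracing the gluing through the sutured-manifold/tangle dictionary and the tangle/graph correspondence, $\widetilde{X}$ corresponds to the graph $\widetilde{G}\subset Y_1\#Y_2$ which is the vertex connected sum of $G_1^{+}$ and $G_2^{+}$ in the sense of Definition~\ref{def:vsum}. Concretely, the annular neighbourhood of the $n$-th suture of $X$ through which we glue is a neighbourhood of the region of $Y_1$ lying over the edge $e_n$ once the sutures are filled, so performing the gluing before filling identifies a neighbourhood of $e_n$ in $Y_1$ with a neighbourhood of $e'_{n'}$ in $Y_2$; a direct check then shows the closed manifold is $Y_1\#Y_2$ and that, with the induced edge ordering, $\widetilde{G}$ is the vertex connected sum of $G_1^{+}$ and $G_2^{+}$, so that the $i$-th edge of $\widetilde{G}$ carries weight $t_i$ and restricts to the $i$-th edge of $G_1^{+}$ and of $G_2^{+}$.

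Granting this, the rest is bookkeeping. First, splitting an edge into parallel copies changes tangle Floer homology only by a base change of coefficients: placing the $n'-2$ extra basepoints $z_n^{(2)},\dots,z_n^{(n'-1)}$ of $T^1_+$ immediately beside $z_n=z_n^{(1)}$ in a Heegaard diagram for $(M^1,T^1)$, every Whitney disk $\phi$ satisfies $n_{z_n^{(1)}}(\phi)=\dots=n_{z_n^{(n'-1)}}(\phi)$, so
\[
\CFT(M^1_+,T^1_+,\spinc_1)\;\cong\;\CFT(M^1,T^1,\spinc_1)\otimes_{\Ring_1}\Ring_1^{+},
\]
where $\Ring_1=\F[\la_1,\dots,\la_n]\to\Ring_1^{+}=\F[\la_1,\dots,\la_{n-1},\la_n^{(1)},\dots,\la_n^{(n'-1)}]$ sends $\la_n\mapsto\la_n^{(1)}\cdots\la_n^{(n'-1)}$. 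Since all the new strands are homologous to $T_n$ in $H_1(M^1,\del M^1)$, a weight vector $\widetilde{\tsf}$ is torsion for $G_1^{+}$ precisely when its image $\tsf=(t_1,\dots,t_{n-1},\,t_n+\dots+t_{n+n'-2})$ under the affine collapse map is torsion for $G_1$, and the defining relations~\ref{Def:rel-grading} show that the relative $\grt$-grading attached to $\widetilde{\tsf}$ agrees under the above isomorphism with the $\grt$-grading attached to $\tsf$; the same persists after lifting to the absolute grading, since the edge-splitting carries perfect matchings of $G_1$ to perfect matchings of $G_1^{+}$ and identifies the associated Maslov-type gradings $\gr_{\match}$. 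Hence $\Upsilon_{G_1^{+},\spinc_1}(\widetilde{\tsf})=\Upsilon_{G_1,\spinc_1}(\tsf)$, and by symmetry $\Upsilon_{G_2^{+},\spinc_2}(\widetilde{\tsf})=\Upsilon_{G_2,\spinc_2}(\tsf')$ with $\tsf'=(t_n,\dots,t_{n+n'-2},\,t_1+\dots+t_{n-1})$, the first $n-1$ entries recording the copies of $e'_{n'}$ and the last $n'-1$ recording $e'_1,\dots,e'_{n'-1}$.

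Finally, since $\widetilde{G}$ is the vertex connected sum of the two $\Theta_{n+n'-2}$-type graphs $G_1^{+}$ and $G_2^{+}$ (both null-homologous, as $Y_1$ and $Y_2$ are homology spheres), Lemma~\ref{lem:sum} applied with the unique $\SpinC$ structures on $Y_1$ and $Y_2$ gives $\Upsilon_{\widetilde{G}}(\widetilde{\tsf})=\Upsilon_{G_1^{+},\spinc_1}(\widetilde{\tsf})+\Upsilon_{G_2^{+},\spinc_2}(\widetilde{\tsf})$; combining this with the previous paragraph and the definitions $\Upsilon_{\widetilde{X}}(\widetilde{\tsf})=\Upsilon_{\widetilde{G}}(\widetilde{\tsf})$, $\Upsilon_{X}(\tsf)=\Upsilon_{G_1}(\tsf)$, $\Upsilon_{X'}(\tsf')=\Upsilon_{G_2}(\tsf')$ yields exactly the stated formula. (One could instead avoid citing Lemma~\ref{lem:sum} and re-run its proof directly, realizing the tangle of $\widetilde{G}$ as the concatenation $(M^1_+,T^1_+)\circ_d(M^2_+,T^2_+)$, invoking Theorem~\ref{TFH-concatenation} and the K\"unneth formula, and tracking the $\grt$-gradings verbatim.) As indicated, the only step requiring genuine geometric care is the identification of $\widetilde{G}$ with a vertex connected sum in the first paragraph; everything after it is routine coefficient-ring, torsion, and grading bookkeeping.
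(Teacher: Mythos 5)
Your argument is correct and mirrors the paper's: both establish the gluing formula by combining a basepoint-duplication step (showing that the $\tsf$-modified invariant is unchanged when one edge is inflated into parallel copies whose basepoints lie in the same domain of the Heegaard surface) with additivity of $\Upsilon$ under vertex connected sum. The paper packages the second step as the stacking decomposition $\widetilde{X}=\widetilde{X}_1\cdot\widetilde{X}_2$ together with Proposition~\ref{prop:hom}, which is itself deduced from Lemma~\ref{lem:sum}, so your direct appeal to Lemma~\ref{lem:sum} via the inflated graphs $G_1^{+}$ and $G_2^{+}$ is the same reasoning phrased in the language of graphs rather than homology cylinders.
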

\begin{proof}
First, we prove a special case of this formula. Suppose $X'$ is the product cobordism over $S'$. Then, mapping $X$ to $\widetilde{X}$ is an injective group homomorphism $\mathcal{E}$ from $\mathcal{H}_{0,n}$ to $\mathcal{H}_{0,n+n'-2}$ \cite{cha_friedl_kim_2011}. A Heegaard diagram for the tangle corresponding to $\widetilde{X}$ is obtained from a Heegaard diagram for the corresponding tangle to $X$ by adding $n'-2$ base points $z_{n+1},\cdots,z_{n+n'-2}$ in the same domain as $z_n$. Using this diagram it is easy to see that 
\[\widetilde{t}\CFT(\widetilde{X})\simeq \tsf\CFT(X)\]
 and so $\Upsilon_{\widetilde{X}}(\widetilde{\tsf})=\Upsilon_{X}(\tsf)$.
 
 Suppose $X'$ is not a product cobordism over $S'$. Let $\widetilde{X}_1\in \mathcal{H}_{0,n+n'-2}$ be the homology cylinder over $\widetilde{S}$ obtained by the gluing of the product cobordism over $S$ and $X'$. Similarly, let $\widetilde{X}_2\in \mathcal{H}_{0,n+n'-2}$ be the result of gluing $X$ and the product cobordism over $S'$. It is easy to see that $\widetilde{X}=\widetilde{X}_1\cdot\widetilde{X}_2$ and so 
 \[\Upsilon_{\widetilde{X}}(\widetilde{\tsf})=\Upsilon_{\widetilde{X}_1}(\widetilde{\tsf})+\Upsilon_{\widetilde{X}_2}(\widetilde{\tsf})=\Upsilon_{X'}(\tsf')+\Upsilon_X(\tsf).\]
 \end{proof}

Following the construction of \cite{OSS-upsilon}, we construct a surjective homomorphism from $\mathcal{H}_{0,n}^\smooth$ to $\Z^\infty$ using the discontinuities of the derivatives of $\Upsilon_{X}(\tsf)$. More precisely, let $V^i=\sum_{j\neq i}V^i_j$. For any $\tsf\in\mathsf{L}_X$, let $l_{i}(\tsf)\subset \mathsf{L}_X$ denote intersection of the line that passes through $\tsf$ and is parallel to $V^i$ with $\mathsf{L}_X$. If $\tsf\notin\del l_i(\tsf)$, we define 
\[\Delta_i\Upsilon_{X}(\tsf)=D_{V^i}^+\Upsilon_X(\tsf)-D_{V^i}^-\Upsilon_X(\tsf)\]
where $D_{V^i}^{\pm}\Upsilon_X(\tsf)$ denote the derivative of $\Upsilon_X$ in the direction of $V_i$ from the right/left at $\tsf$. 

\begin{lem} \label{lem:int}
For any positive integer $1\le i\le n$ and real number $0\le a\le \frac{2}{n-1}$, let $\tsf^i_a\in\mathsf{L}_X$ be the solution for which $t_j=a$ if $j\neq i$ while $t_i=2-(n-1)a$. Then, $a\Delta_i\Upsilon_X(\tsf^i_a)$ is an even integer.
\end{lem}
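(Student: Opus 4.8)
The plan is to reduce the statement to the analogous integrality result for the usual upsilon invariant of a knot, proved in \cite[Theorem 1.20]{OSS-upsilon} (and its restatement \cite[Lemma 10.1 / Section 10]{OSS-upsilon}). First I would examine the point $\tsf^i_a$: its $i$-th coordinate is $t_i = 2-(n-1)a$ and all other coordinates equal $a$. As $a$ ranges over $(0,\tfrac{2}{n-1})$ this traces out exactly the segment $l_i(\tsf^i_a)$, which is the line through the vertex $\tsf^i = \tsf_{\match_i}$ (the solution associated with the perfect matching $\{e_i\}$, i.e. $t_i=2$, all others $0$) in the direction $V^i = \sum_{j\neq i} V^i_j$. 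So the claim is really a statement about the jump in the directional derivative of $\Upsilon_X$ along a straight line through a vertex of $\mathsf L_n$.

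Next I would set up the comparison with a single-variable function. Restricting $\Upsilon_X(\tsf)$ to the segment $l_i$ and parametrizing by $a$, we get a piecewise-linear function $a \mapsto \upsilon_i(a) := \Upsilon_X(\tsf^i_a)$ on $[0,\tfrac{2}{n-1}]$ (finitely many linear pieces, by Lemma \ref{lem:PL}). The grading $\grt$ of any generator $\x \in \Ta\cap\Tb$ is, by the $\Delta$-complex construction in Section \ref{sec:grading}, an affine-linear combination of the integer-valued gradings $\gr_{\match_k}(\x)$; along the segment $l_i$ the barycentric coordinates are precisely $\tfrac{(n-1)a}{2}$ on the other vertices of the relevant simplex and $1-\tfrac{(n-1)a}{2}$ on $\tsf_{\match_i}$, so $\grt(\x)$ has the form $\gr_{\match_i}(\x) + \tfrac{(n-1)a}{2}\big(\text{something}\big)$, where the "something" is a difference of integer gradings. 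This is exactly the same structural form as the function $\gr_t(\x) = M(\x) - tA(\x)$ appearing in the knot case (Example \ref{ex:link}), after the linear substitution $t \leftrightarrow \tfrac{(n-1)a}{2}$. I would therefore invoke the linear change of coordinates to convert $\Delta_i\Upsilon_X(\tsf^i_a)$ into (a constant multiple of) the jump $\Delta\Upsilon_{\hat L}(t)$ of the ordinary upsilon of an appropriate oriented link $\hat L$ associated with the sub-diagram, and quote \cite[Theorem 1.20]{OSS-upsilon} which says $t\cdot\Delta\Upsilon(t)$ is an even integer.

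Concretely, the key steps in order: (1) identify $\tsf^i_a$ with a point on the line through $\tsf_{\match_i}$ in direction $V^i$, and note $\tsf^i_0 = \tsf_{\match_i}$ with $\Upsilon_X(\tsf_{\match_i})$ recovering $\HF^-(Y,\spinc)$-data; (2) using the splitting $\tsf\CFT(M_G,T_G,\spinc)\otimes\Rring^* \simeq \tsf_{\match_i}\CFT\otimes\Rring^*$ from Lemma \ref{lem:infty1} and its grading behaviour, express each $\Upsilon_k(\tsf^i_a)$ as the $\grt$-value of a homogeneous basis element, which is $\gr_{\match_i}$ of that element plus $\tfrac{(n-1)a}{2}$ times an integer that is the difference $\gr_{\match_i} - \gr_{\match_k}$ (divided by $2$) for the matching $\match_k$ steering in the $e_j$-direction; (3) observe that the set of these integers is exactly the set of Alexander-type gradings occurring in the single-variable upsilon of the link obtained by "closing off" all strands except near $e_i$, so that $\upsilon_i(a) = \Upsilon_{\hat L}\!\big(\tfrac{(n-1)a}{2}\big)$ plus a constant for the appropriate link $\hat L$ in an integer homology sphere; (4) conclude $a\,\Delta_i\Upsilon_X(\tsf^i_a) = \tfrac{2}{n-1}\cdot\tfrac{(n-1)a}{2}\,\Delta\Upsilon_{\hat L}\!\big(\tfrac{(n-1)a}{2}\big)$ — wait, I would track the Jacobian factor carefully here — and apply \cite[Theorem 1.20]{OSS-upsilon}: $t\,\Delta\Upsilon_{\hat L}(t)\in 2\Z$. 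Since the substitution is linear with the right scaling, the even-integrality transfers.

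The main obstacle I expect is bookkeeping the normalization constants: precisely matching the direction vector $V^i$ and the parameter $a$ to the $t$-parameter of the ordinary upsilon so that no stray factor of $\tfrac{1}{2}$ or $n-1$ spoils the evenness, and verifying that the jump in the directional derivative along $l_i$ really is computed by the same integers that control the jumps of $\Upsilon_{\hat L}$. This requires knowing that, infinitesimally near $\tsf_{\match_i}$, moving in direction $V^i$ only changes the weights $t_j$ ($j\neq i$) uniformly, so that the relevant sub-simplices of $\mathsf L_n$ all share the vertex $\tsf_{\match_i}$ and the barycentric-coordinate computation of $\grt$ collapses to a single scalar parameter — which is exactly the structure exploited in \cite[Theorem 1.20]{OSS-upsilon}. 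Once that identification is pinned down, the integrality is immediate from the cited theorem.
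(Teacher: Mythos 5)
Your overall setup in steps (1)--(2) tracks the paper's approach: restrict $\Upsilon_X$ to the line segment $l_i$ through the vertex $\tsf_{\match_i}$, observe that $\grt(\x)$ is affine-linear in the parameter $a$ with coefficients built from the integer-valued matching gradings $\gr_{\match_j}(\x)$, and compute the jump $\Delta_i\Upsilon_X(\tsf^i_a)$ as a difference of these slopes. That is precisely what the paper does, and stopping there is enough: the paper's proof identifies generators $\x,\y$ with $\gr_{\tsf^i_a}(\x)=\gr_{\tsf^i_a}(\y)$ realizing the two one-sided slopes, and uses this equality to collapse the expression for $a\Delta_i\Upsilon_X(\tsf^i_a)$ to $2\bigl(\gr_i(\y)-\gr_i(\x)\bigr)$, a manifestly even integer since $\gr_i$ is $\Z$-valued. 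No auxiliary link is needed.

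Your steps (3)--(4) take a genuinely different route — reduce to the classical upsilon of a link $\hat L$ via a linear change of coordinates and quote \cite[Theorem 1.20]{OSS-upsilon} — and this is where the argument has a real gap. You assert that there is an oriented link $\hat L$ with $\upsilon_i(a)=\Upsilon_{\hat L}\bigl(\tfrac{(n-1)a}{2}\bigr)+c$, but you do not identify $\hat L$ or show that $\tsf\CFT(M_G,T_G,\spinc)$ restricted to the segment $l_i$ is the $t$-modified link Floer complex of anything. For a $\Theta_n$-tangle there are $n$ basepoints on the Heegaard diagram, all of "$w$-type" under the identification with $\CF^-(Y,\spinc)$, not paired $(w,z)$ basepoints, so the diagram does not naturally carry a link Floer structure. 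Relatedly, your slope expression in step (2) is not quite right: along $l_i$ one has $\grt(\x)=\gr_i(\x)+\tfrac{a}{2}\sum_{j\ne i}\bigl(\gr_j(\x)-\gr_i(\x)\bigr)$, so the slope is a single sum over all $j\ne i$, not $\tfrac{n-1}{2}$ times one pairwise difference $\gr_{\match_i}-\gr_{\match_k}$, and it is not of the knot-Floer form $M-tA$ for a single Alexander-type integer unless $n=2$. Since the identification with a link upsilon is the load-bearing step of your plan and it is unsubstantiated (and likely false for $n>2$), the proposal as written does not establish the lemma. The fix is simply to drop the detour: carry out the direct computation you already set up in (1)--(2), as the paper does.
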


\begin{proof}
Similar to the proof of \cite[Proposition 1.7]{OSS-upsilon}. Restrict $\Upsilon_X$ to the line $l_i(\tsf^i_a)$. There exists intersection points $\x$ and $\y$ such that $\gr_{\tsf_a^i}(\x)=\gr_{\tsf_a^i}(\y)$, and for $\tsf\in l_i(\tsf^i_a)$ near $\tsf^i_a$ we have $\Upsilon_{X}(\tsf)$ is equal to $\gr_{\tsf}(\x)$ in one side of $\tsf^i_a$ while equal to $\gr_{\tsf}(\y)$ on the other side. So,
\begin{align*}
a\Delta_i\Upsilon_X(\tsf^i_a)&=\sum_{j\neq i}a\left(\gr_j(\x)-\gr_j(\y)\right)-(n-1)a(\gr_i(\x)-\gr_i(\y))\\
&=2\left(\gr_i(\y)-\gr_i(\x)\right).
\end{align*}
\end{proof}


\begin{lem}\label{lem:surj}
For any $i=1,\cdots, n$, we define a homomorphism $f_i$ from $\HD_{0,n}^{\smooth}$ to $\Z^\infty$ by setting
\[X\mapsto \left\{\frac{1}{(2k+1)(n-1)}\Delta_i\Upsilon_X\left(\tsf^{i}_{\frac{2}{(2k+1)(n-1)}}\right)\right\}_{k=1}^\infty.\]
This homomorphism is surjective on the kernel of $\mathcal{H}_{0,n}^\smooth\to\mathcal{H}_{0,n}^{\topo}$.
\end{lem}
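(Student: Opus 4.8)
The plan is to follow the template of \cite[Theorem~1.20]{OSS-upsilon} closely, adapting each step to the homology-cylinder setting. First I would establish that each coordinate function $X\mapsto \frac{1}{(2k+1)(n-1)}\Delta_i\Upsilon_X\bigl(\tsf^i_{2/((2k+1)(n-1))}\bigr)$ is a well-defined homomorphism to $\Z$: integrality is exactly Lemma~\ref{lem:int} applied with $a=\frac{2}{(2k+1)(n-1)}$ (noting that $a\le \frac{2}{n-1}$, so $\tsf^i_a\in\mathsf{L}_X$), and additivity follows from Lemma~\ref{lem:sum} / Proposition~\ref{prop:hom} together with the fact that directional derivatives and left/right difference operators are linear in $\Upsilon_X$. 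Well-definedness on $\mathcal{H}_{0,n}^{\smooth}$ is Lemma~\ref{lem:coninv} (equivalently Theorem~\ref{thm:invhomcob}). One subtlety to check is that $\tsf^i_a$ is not an endpoint of the segment $l_i(\tsf^i_a)$ for these specific values of $a$, so that $\Delta_i\Upsilon_X$ is genuinely a two-sided jump; for $0<a<\frac{2}{n-1}$ the point $\tsf^i_a$ lies in the interior of that segment, so this holds.

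Next I would verify that the assembled map $f_i$ lands in $\Z^\infty$, i.e.\ that for each fixed $X$ only finitely many of the jumps $\Delta_i\Upsilon_X(\tsf^i_{2/((2k+1)(n-1))})$ are nonzero. This is where Lemma~\ref{lem:PL} enters: $\Upsilon_{X}$ is piecewise linear on the compact polytope $\mathsf{L}_X$ with finitely many linear pieces, so the restriction to the ray $\{\tsf^i_a : 0\le a\le \tfrac{2}{n-1}\}$ has only finitely many breakpoints, hence $\Delta_i\Upsilon_X(\tsf^i_a)=0$ for all but finitely many $a$ in that family. (In the knot case $n=2$ this recovers the statement that $\Upsilon_K$ has finitely many singularities.)

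The substantive part is surjectivity onto $\ker\bigl(\mathcal{H}_{0,n}^{\smooth}\to\mathcal{H}_{0,n}^{\topo}\bigr)$. Here I would use the gluing homomorphism $\mathcal{E}\colon\mathcal{H}_{0,2}\hookrightarrow\mathcal{H}_{0,n}$ of \cite{cha_friedl_kim_2011}, which inserts a product cobordism along the $i$-th boundary component, together with Lemma~\ref{lem:gluing}: under this inclusion a knot $K$ in $S^3$ (viewed as an element of $\mathcal{H}_{0,2}$ along the relevant pair of boundary components) maps to a homology cylinder $X_K$ with $\Upsilon_{X_K}(\tsf)=\Upsilon_K(t_i + \sum_{j\neq i}t_j \cdot(\text{something}))$ restricting, along the line $l_i$, to $\Upsilon_K$. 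Consequently $\frac{1}{(2k+1)(n-1)}\Delta_i\Upsilon_{X_K}(\tsf^i_{2/((2k+1)(n-1))})$ equals (up to the normalization, which I would pin down by the same computation as in Lemma~\ref{lem:int}) the corresponding jump of $\Upsilon_K$ at $t=\tfrac{2k}{2k+1}$, i.e.\ the integer that appears in the Ozsv\'ath--Stipsicz--Szab\'o homomorphism $\mathcal{C}^{\smooth}\to\Z^\infty$. Since that map is surjective on $\ker(\mathcal{C}^{\smooth}\to\mathcal{C}^{\topo})$ by \cite[Theorem~1.20]{OSS-upsilon}, and since topologically slice knots give topologically trivial homology cylinders (so $X_K$ lies in $\ker(\mathcal{H}_{0,n}^{\smooth}\to\mathcal{H}_{0,n}^{\topo})$ when $K$ is topologically slice), hitting an arbitrary target sequence in $\Z^\infty$ reduces to choosing an appropriate connected sum of topologically slice knots $K$ and inserting it via $\mathcal{E}$.

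The main obstacle I anticipate is bookkeeping the normalization and the precise line along which $\Upsilon_{X_K}$ restricts to $\Upsilon_K$: Lemma~\ref{lem:gluing} identifies $\Upsilon_{\widetilde X}(\widetilde\tsf)$ with $\Upsilon_X(\tsf)+\Upsilon_{X'}(\tsf')$ for a specific affine reparametrization of the simplex, and I must check that the ray $a\mapsto\tsf^i_a$ in $\mathsf{L}_{X_K}$ pulls back, under that reparametrization, to the diagonal segment in $\mathsf{L}_2=\{(t,2-t)\}$ on which $\Upsilon_{X_K}$ agrees with $\Upsilon_K(t)$, and that the breakpoint at $a=\frac{2}{(2k+1)(n-1)}$ corresponds to $t=\frac{2k}{2k+1}$ with the factor $\frac{1}{(2k+1)(n-1)}$ converting the jump computed in the $\tsf$-coordinates (via Lemma~\ref{lem:int}) into the jump $\Delta_K(t)$ computed in the $t$-coordinate. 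Once this dictionary is set up, surjectivity is immediate from the $n=2$ case; I would also remark that injectivity of $\mathcal{E}$ is not needed for surjectivity of $f_i$, only for the applications.
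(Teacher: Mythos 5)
Your proposal follows the paper's own argument essentially verbatim: Lemmas~\ref{lem:PL} and \ref{lem:int} to land in $\Z^\infty$, the gluing map $\mathcal{E}_i$ together with Lemma~\ref{lem:gluing} to reduce to the knot case $\Upsilon_X(\tsf)=\Upsilon_K(t_i)$, and then \cite[Theorems~1.17, 1.20]{OSS-upsilon} for surjectivity on the topological kernel. One small arithmetic slip to fix in your bookkeeping: at $a=\tfrac{2}{(2k+1)(n-1)}$ one has $t_i=2-(n-1)a=\tfrac{4k}{2k+1}$ (equivalently, by the symmetry $\Upsilon_K(t)=\Upsilon_K(2-t)$, the jump at $\tfrac{2}{2k+1}$, as in the paper), not $\tfrac{2k}{2k+1}$.
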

\begin{proof}
It follows from Lemmas \ref{lem:PL} and \ref{lem:int} that the target of this map is indeed $\Z^{\infty}$. To prove surjectivity, consider the map $\mathcal{E}_i$ from $\mathcal{H}_{0,2}\cong\mathcal{C}_{\Z}\oplus\Z$ to $\mathcal{H}_{0,n}$
defined by gluing any homology cylinder over $S_{0,2}$ i.e. a framed knot, to the product cylinder in $\mathcal{H}_{0,n}$ along a tubular neighborhood of the $i$th suture. Then, it follows from Lemma \ref{lem:gluing} that for any homology cylinder $X$ with $[X]=\mathcal{E}_i([K],m)$ for some knot $K\subset S^3$ and $m\in\Z$ we have
\[\Upsilon_X(\tsf)=\Upsilon_K(t_i)\quad\quad\quad\text{for}\quad{\tsf}=(t_1,t_2,\cdots,t_n)\in\mathsf{L}_X.\]
Therefore, 
\[\begin{split}
\frac{1}{(2k+1)(n-1)}\Delta_i\Upsilon_X\left(\tsf^{i}_{\frac{2}{(2k+1)(n-1)}}\right)&=\frac{1}{(2k+1)(n-1)}\left(-(n-1)\Delta_i\Upsilon_K\left(\frac{2}{2k+1}\right)\right)\\
&=-\frac{1}{2k+1}\Delta_i\Upsilon_K\left(\frac{2}{2k+1}\right)
\end{split}\]
and surjectivity follows from \cite[Theorems 1.17 and 1.20]{OSS-upsilon}.
\end{proof}

\newpage

\bibliographystyle{hamsalpha}
\bibliography{HFbibliography}

\end{document}